\documentclass[12pt]{amsart}
\usepackage{amsmath}
\usepackage{extarrows}
\usepackage{amsfonts}
\usepackage{amssymb}
\usepackage[all,cmtip]{xy}           

\usepackage{bbm}
\usepackage{bbding}
\usepackage{txfonts}
\usepackage[shortlabels]{enumitem}
\usepackage{ifpdf}
\ifpdf
\usepackage[colorlinks,final,backref=page,hyperindex]{hyperref}
\else
\usepackage[colorlinks,final,backref=page,hyperindex,hypertex]{hyperref}
\fi
\usepackage{tikz-cd}
\usepackage[active]{srcltx}
\usepackage{tikz}
\usepackage{amscd}
\usepackage{bm}
\usepackage{mathrsfs}

\makeatletter

\newtheorem{thm}{Theorem}[section]
\newtheorem{prop}[thm]{Proposition}
\newtheorem{lem}[thm]{Lemma}
\newtheorem{cor}[thm]{Corollary}
\newtheorem{prop-def}{Proposition-Definition}[section]

\newtheorem{defn}[thm]{Definition}

\newtheorem{rmk}[thm]{Remark}
\newtheorem{exam}[thm]{Example}

\newcommand{\nc}{\newcommand}
\nc{\Sh}{{\mathrm{Sh}}}
\nc{\ot}{\otimes}
\nc{\Id}{\mathrm{Id}}
\nc{\id}{\mathrm{Id}}
\nc{\NR}{{\rm NR}}
\nc{\G}{{\rm G}}
\nc{\T}{{\rm T}}
\nc{\Hom}{\mathrm{Hom}}
\nc{\rmC}{ {\mathrm{C}}}
\nc{\rmS}{ {\mathrm{S}}}
\nc{\RB}{{\mathsf{RB}}}
\nc{\ad}{\mathrm{ad}}
\nc{\Lie}{\mathrm{Lie}}
\nc{\RBA}{{\mathrm{RBA}_{(\mu,\lambda)}}}
\nc{\C}{\mathrm{C}}
\nc{\rmH}{\mathrm{H}}
\nc{\rar}{\rightarrow}
\nc{\End}{\mathrm{End}}

\def\bc{\begin{center}}
	\def\ec{\end{center}}

\def\hang{\hangindent\parindent}
\def\textindent#1{\indent\llap{\qquad #1\ \ \enspace}\ignorespaces}
\def\ref{\par\hang\textindent}

\def\o{\otimes}

\def \frakg{ \frak g}
\def\bfk{\mathbf{k}}

\def \frakg{\mathfrak{g}}
\def \frakL{\mathfrak{L}}
\def \frakm{\mathfrak{m}}
\def \fraka{\mathfrak{a}}

\allowdisplaybreaks
\begin{document}
	\title[Extended Rota-Baxter Lie algebras]{Deformations and the  controlling $L_{\infty}$-structure of extended Rota-Baxter Lie  algebras}
	
	\author{Jian Yang}
	\address{Jian Yang\\
		School of Mathematical Sciences, 
		East China Normal University,
		Shanghai 200241,
		China}

	\email{y.j0@qq.com }
	
	\date{\today}
	
	\begin{abstract} In this paper, we introduce the controlling $L_{\infty}[1]$-algebra of extended Rota-Baxter Lie algebras. As applications, we obtain  the cohomology of  extended Rota-Baxter Lie algebras. We also study  abelian extensions and formal deformations by the lower cohomology group. Lastly, we get the notion of homotopy extended Rota-Baxter Lie algebras.  
		
		Specifically, we present the controlling $L_{\infty}[1]$-algebra  of modified Rota-Baxter Lie algebras and of Rota-Baxter Lie algebras with weight.
	\end{abstract}
	
		\subjclass[2020]{
			16E40   
			16S80   
			17B38  
			16S70   
			}
	
	\keywords{extended Rota-Baxter Lie algebra, $L_{\infty}[1]$-algebra, deformation, cohomology.}
	
	\maketitle
	
	\tableofcontents
	
	\allowdisplaybreaks

\section{Introduction}

In this paper, we study  extended Rota-Baxter Lie algebras,  a generalization of Rota-Baxter Lie algebras with weight and modified Rota-Baxter Lie algebras. 

	\subsection{Extended Rota-Baxter Lie algebras}\

Baxter introduced Rota-Baxter algebras  in \cite{Bax60},  interestingly, this work originated from research in probability theory. Then Rota \cite{Rot69}, Cartier \cite{Car72} and other researchers subsequently entered this field.   Semenov-Tian-Shansky  independently discovered that the Rota-Baxter operator on  Lie algebras  is a solution of the classical Yang-Baxter equation \cite{Sem83}. Similarly, for the case of associative algebras, it was given by Aguiar \cite{Agu00} and Bai \cite{Bai07}. Subsequently, Guo and Keigher \cite{GK00a, GK00b} realized free commutative Rota-Baxter algebras. In  \cite{Sem83}, the authors also introduced a modified version of the classical Yang-Baxter
equation  whose solutions are just the modified Rota-Baxter operators, which they call  modified r-matrices.

Connes and Kreimer  \cite{CK00} established a significant connection between Rota-Baxter operators and mathematical physics  in the renormalization of quantum field theory.
Rota-Baxter operators yield a wealth of applications and linkages across different mathematical domains such as combinatorics \cite{Rot95}, multiple zeta values in number theory \cite{GZ08}, operad theory \cite{Agu01, BBGN13}, and Hopf algebras \cite{CK00}. The modified classical Yang-Baxter
equation has important applications in the study of Lax equations, affine geometry on Lie groups, factorization problems
in Lie algebras and compatible Poisson structures \cite{BGN10,B90,L99,S15}.

Due to the importance of the Rota-Baxter operator and the
modified Rota-Baxter operator on Lie algebras, we study their generalization: the extended Rota-Baxter Lie algebra.

Fix $\mu,\lambda\in \bfk$ where $\bfk$ is a field. An extended Rota-Baxter operator of weight $(\mu,\lambda)$ on a Lie algebra $(A,[-,-])$   is a linear operator $T:A\to A$ satisfying the identity:
\begin{equation} \label{eq:diffwt}
	[T(u),T(v)]=T([T(u),v]+[u,T(v)]+\mu [u,v])+\lambda [u,v], \quad u, v\in A.
\end{equation}
A Lie algebra  endowed with an extended Rota-Baxter operator of weight $(\mu,\lambda)$ is called an extended Rota-Baxter Lie algebra of weight $(\mu,\lambda)$.

\subsection{Deformations and homotopy theory}\

A central philosophy in deformation theory, inspired by the foundational work of Gerstenhaber, Nijenhuis, Richardson, Deligne and others, is that the deformation of a given mathematical structure can be described by a differential graded (dg) Lie algebra or, more generally, an $L_\infty$-algebra. Consequently, an essential problem in deformation theory is the explicit construction of the dg Lie algebra or $L_\infty$-algebra that governs the deformations.

Another significant challenge in the study of algebraic structures is understanding their homotopy analogs, such as $A_\infty$-algebras for associative algebras and $L_\infty$-algebras for Lie algebras.

The deformation theory  and homotopy theory of  Rota-Baxter  algebras had been absent for a long time despite   the   importance of Rota-Baxter   algebras.  Recently there are some breakthroughs in this direction.

Fortunately, there exists a powerful technique of  derived brackets which was invented by   Voronov   \cite{Vor05}. It has been successfully applied to obtain the controlling algebra of relative Rota-Baxter Lie algebra of weight zero \cite{LST,LST2} and the controlling algebra of relative Rota-Baxter algebra of weight zero \cite{DM20}. However this technique cannot apply directly to the general case of extended Rota-Baxter Lie algebras. We use a generalised version to study Rota-Baxter Lie algebras of arbitrary weight in this paper as a supplement. This provides further strong evidence that the controlling algebra of extended Rota-Baxter Lie algebras is a good one. 

The  case of  nonzero weight is more difficult. Wang and Zhou \cite{WZ24} gave an explicit construction of the minimal model of the Rota-Baxter operad and developed the deformation and homotopy theory of Rota-Baxter algebras of arbitrary weight. Recently, Chen, Wang and Zhou established those results for Rota-Baxter Lie algebras \cite{CWZ}.  Specifcally, the controlling $L_{\infty}[1]$-algebra of Rota-Baxter algebras with weight coincides with ours.

For modified Rota-Baxter algebras, Das \cite{Das22} gave the cohomology theory and applications.

\subsection{Layout of the paper}\

We recall some basic notions and  introduce some facts about  extended Rota-Baxter Lie algebras and their representations in Section~\ref{sec 2}. In particular, there is a key descending property for extended Rota-Baxter representations in Proposition~\ref{descending property}.

In Section \ref{sec 3}, we recall the curved dg Lie algebra and the MC characterization of Lie algebras. We also give the MC characterization and  cohomology of the extended Rota-Baxter operator on Lie algebras in \ref {subsec 3.2}.

In the next section \ref{sec 4}, we recall the $L_\infty$-algebra, since  the controlling algebra of extended Rota-Baxter Lie algebras is an $L_{\infty}$-algebra .

The following Sections \ref{sec 5}, \ref{sec 6}  are the key sections of our paper: we construct the $L_{\infty}[1]$-algebra of extended Rota-Baxter Lie algebras in Theorem \ref{thm: Linfinity for extend Lie} and cohomology of extended Rota-Baxter Lie algebras with arbitrary coefficients in Proposition-Definition \ref {RBLie homology on V}. And we consider those important cases: modified Rota-Baxter Lie algebras and of Rota-Baxter Lie algebras with weight.

In Sections  \ref{sec 7}, \ref{sec 8} and \ref{sec 9}, we study formal deformations, abelian extensions  and homotopy extended Rota-Baxter Lie algebras, as one would expect of a good cohomology theory and of a good controlling algebra of extended Rota-Baxter Lie algebras.

	\bigskip

	\section{Extended  Rota-Baxter Lie algebras}\label{sec 2}
	
	Throughout this paper,	$\bfk$ is a field  with $\mathrm{char}(\bfk)=0$.
	Let $V=\oplus_{n\in \mathbb{Z}} V^n$ be a graded vector space. We define the shift map $s$  by $(sV)^n:=V^{n+1}$.

	Now we introduce extended Rota-Baxter Lie algebras in detail.

	\begin{defn}
		\textbf{An extended Rota-Baxter Lie algebra of weight $(\mu,\lambda)$} for $\mu,\lambda \in \bfk$ is a triple $(\frakg,\pi=[-,-],T)$ where $(\frakg,\pi)$ is a Lie algebra and  $T:\frakg\to \frakg$ is a linear map such that $[T(a),T(b)]=T([T(a),b]+[a,T(b)]+\mu [a,b]) +\lambda [a,b]$ for $a,b \in \frakg$.
		
		A morphism of extended Rota-Baxter Lie algebras  between $(\frakg,\pi,T)$ and $(\frakg',\pi',T')$ of the same weight is a morphism of Lie algebras $f: \frakg \to \frakg'$ such that $f\circ T= T'\circ f$.
	\end{defn}

	\begin{defn}\label{Def: Rota-Baxter rep}
		Let $(\frakg,\pi,T)$ be an extended Rota-Baxter Lie algebra of weight $(\mu,\lambda)$ and $(M,\rho)$ be a representation
		of the Lie algebra $\frakg$. We say that $M$ is an extended Rota-Baxter representation if  $M$ is endowed with a
		linear map $T_M: M\rightarrow M$ such that the following
		equations
		\begin{eqnarray}\rho(T(a))T_M(m)&=&T_M\big(\rho(a)T_M(m)+\rho(T(a))m+\mu \rho(a)m\big)+\lambda \rho(a)m,
		\end{eqnarray}
		hold for any $a\in \frakg$ and $m\in M$.
	\end{defn}
	
	\begin{rmk}
		Set $[a,m]:= \rho(a)m$.
	\end{rmk}
	
	\begin{exam}
		A Rota-Baxter Lie algebra of weight $\lambda$ is precisely an extended Rota-Baxter Lie algebra of weight $(\lambda,0)$.
		
		A modified Rota-Baxter Lie algebra of weight $\lambda$ is just an extended Rota-Baxter Lie algebra of weight $(0,\lambda)$.
	\end{exam}

	\begin{exam}
		$(\frakg,T)$ itself is an extended Rota-Baxter representation of $(\frakg,\pi,T)$, it is called the regular extended Rota-Baxter representation.
	\end{exam}

	The following is easy to check:
	\begin{prop}  \label{Prop: trivial extension of Rota-Baxter rep}
		Let $(\frakg,\pi,T)$ be an extended Rota-Baxter Lie algebra of weight $(\mu,\lambda)$ and let  $M$ be an extended Rota-Baxter representation of $(\frakg,\pi,T)$. Then $(\frakg\oplus M,T\oplus T_M)$ is an extended Rota-Baxter Lie algebra with the bracket defined by \begin{eqnarray}[(a,m),(b,n)]=([a, b], [a,n]-[b,m]).\end{eqnarray}
		
		  Denote this new extended Rota-Baxter Lie algebra by $\frakg\ltimes  M$. It is called the \textbf{semi-direct product} (or trivial extension) of $\frakg$ by $M$.
	\end{prop}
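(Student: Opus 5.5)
Two things must be checked: that $\frakg\oplus M$ with the stated bracket is a Lie algebra, and that $T\oplus T_M$ satisfies \eqref{eq:diffwt} of weight $(\mu,\lambda)$ for this bracket. The first is the standard semi-direct product of a Lie algebra with a module. Antisymmetry is visible from the formula $[(a,m),(b,n)]=([a,b],[a,n]-[b,m])$. For the Jacobi identity, the $\frakg$-component is the Jacobi identity of $\frakg$. In the $M$-component, expanding the cyclic sum gives terms of the form $\rho([a,b])p-\rho(a)\rho(b)p+\rho(b)\rho(a)p$ for $p\in M$, and these cancel because $\rho$ is a representation. So only the operator identity needs work.

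For the operator identity, I will use bilinearity of both sides of \eqref{eq:diffwt} in $(u,v)$. Writing $(a,m)=(a,0)+(0,m)$, it suffices to check three cases, with the fourth obtained by antisymmetry.

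\textbf{Case $(a,0),(b,0)$:} Everything stays in $\frakg$, and the identity is exactly the extended Rota-Baxter identity for $T$.

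\textbf{Case $(0,m),(0,n)$:} The bracket of $M$ with itself is zero, so both sides vanish.

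\textbf{Case $(a,0),(0,m)$:} Here $[(a,0),(0,m)]=(0,\rho(a)m)$.
\begin{itemize}
\item The left side is $[(T(a),0),(0,T_M(m))]=(0,\rho(T(a))T_M(m))$.
\item The right side is $(T\oplus T_M)\big((0,\rho(T(a))m)+(0,\rho(a)T_M(m))+\mu(0,\rho(a)m)\big)+\lambda(0,\rho(a)m)$.
\end{itemize}
Equality of the two sides is precisely the defining equation of an extended Rota-Baxter representation in Definition~\ref{Def: Rota-Baxter rep}.

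\textbf{Case $(0,m),(a,0)$:} This follows from the previous case by multiplying by $-1$, since every term of \eqref{eq:diffwt} is antisymmetric in $(u,v)$ when the bracket is antisymmetric.

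There is no real obstacle here. The only point needing care is the sign convention $[a,n]-[b,m]$ in the mixed bracket, which must be tracked correctly in the mixed cases. That check is immediate once the identity is decomposed by bilinearity as above.
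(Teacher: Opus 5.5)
Your proof is correct and is essentially the direct verification the paper leaves to the reader, since it states the result is ``easy to check'' and gives no proof. Reducing by bilinearity and antisymmetry to the mixed case $(a,0),(0,m)$, where the identity is literally the defining equation of an extended Rota-Baxter representation, is exactly the content of that check.
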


	We have the  following interesting observation:
	\begin{prop}\label{Prop: new RB algebra}
		Let $(\frakg,\pi,T)$ be an extended Rota-Baxter Lie algebra of weight $(\mu,\lambda)$. Define a new bracket operation as follows:
		\begin{eqnarray}[a , b]_\star:=[a, T(b)]+[T(a), b]+\mu [a, b]\end{eqnarray}
		for any $a,b\in \frakg$. Then the triple  $(\frakg,[-,-]_\star ,T)$ also forms an extended Rota-Baxter Lie algebra of weight $(\mu,\lambda)$  and we denote it by $\frakg_\star $.
	\end{prop}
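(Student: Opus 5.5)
Three things have to be checked: that $[-,-]_\star$ is skew-symmetric, that it satisfies the Jacobi identity, and that $T$ is again an extended Rota-Baxter operator of weight $(\mu,\lambda)$ for the new bracket. Skew-symmetry is immediate, since each of the three summands $[a,T(b)]+[T(a),b]$ and $\mu[a,b]$ is antisymmetric under swapping $a$ and $b$.

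The key observation is the identity
\[
T([a,b]_\star)=[T(a),T(b)]-\lambda[a,b],
\]
which is just the defining relation (\ref{eq:diffwt}) rewritten. We use it twice.

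\textbf{Compatibility of $T$.} We must show
\[
[T(a),T(b)]_\star=T\big([T(a),b]_\star+[a,T(b)]_\star+\mu[a,b]_\star\big)+\lambda[a,b]_\star .
\]
Expanding the left side gives $[T(a),T^2(b)]+[T^2(a),T(b)]+\mu[T(a),T(b)]$. On the right, the key identity lets us replace each $T([x,y]_\star)$ by $[T(x),T(y)]-\lambda[x,y]$. Doing this for $(x,y)=(T(a),b)$, $(a,T(b))$ and $(a,b)$, the right side becomes
\[
[T^2(a),T(b)]-\lambda[T(a),b]+[T(a),T^2(b)]-\lambda[a,T(b)]+\mu[T(a),T(b)]-\mu\lambda[a,b]+\lambda[a,b]_\star .
\]
Since $\lambda[a,b]_\star=\lambda\big([a,T(b)]+[T(a),b]+\mu[a,b]\big)$, the $\lambda$-terms cancel, and the right side equals the left side.

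\textbf{Jacobi identity.} This is the main obstacle. Expand
\[
[[a,b]_\star,c]_\star=[[a,b]_\star,T(c)]+[T([a,b]_\star),c]+\mu[[a,b]_\star,c].
\]
Apply the key identity to the middle term, turning it into $[[T(a),T(b)],c]-\lambda[[a,b],c]$. Then expand $[a,b]_\star$ in the first and third terms. The result is a sum of ordinary double brackets in $a,b,c$ and their images under $T$, graded by the number of $T$'s and by the powers of $\mu$ and $\lambda$:
\begin{enumerate}
\item the terms with two $T$'s: $[[T(a),b],T(c)]$, $[[a,T(b)],T(c)]$ and $[[T(a),T(b)],c]$;
\item the terms carrying $\mu$ with one $T$: $\mu\big([[a,b],T(c)]+[[T(a),b],c]+[[a,T(b)],c]\big)$;
\item the term $(\mu^2-\lambda)[[a,b],c]$.
\end{enumerate}
Now sum cyclically over $(a,b,c)$ and regroup by which variables carry $T$. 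In group (1), the three terms in which $T$ falls on $a$ and $b$ form one Jacobi identity in $(T(a),T(b),c)$, so they vanish; the same happens for the other placements. In group (2), the terms in which $T$ falls on a fixed variable form the Jacobi identity in $(T(x),y,z)$. Group (3) is a multiple of the Jacobi identity in $(a,b,c)$. All three groups vanish, which proves the Jacobi identity for $[-,-]_\star$ and completes the proof.
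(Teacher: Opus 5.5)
Your proof is correct, and for the operator identity it is the same direct verification as in the paper, organized slightly differently: the paper expands $[T(a),T(b)]_\star$ and applies the defining relation to each of its three summands, whereas you rewrite the right-hand side using $T([x,y]_\star)=[T(x),T(y)]-\lambda[x,y]$, which makes the cancellation of the $\lambda$-terms more transparent. You also check skew-symmetry and the Jacobi identity for $[-,-]_\star$, which the paper explicitly omits (it says it will ``only show that $T$ is an extended Rota-Baxter operator''); your regrouping of the cyclic sum into Jacobi identities for $(T(x),T(y),z)$, $(T(x),y,z)$ and $(x,y,z)$ is correct, so your argument is the more complete one.
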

	\begin{proof}
		We only show that $T$ is an extended Rota-Baxter operator on the Lie algebra $\frakg_\star$.
		
		For any $a,b\in \frakg$,
		\begin{align*}
			&[Ta, Tb]_\star\\
			=& [T(a),T^2(b)]+[T^2(a),T(b)]+\mu [T(a),T(b)]\\
			=&T\big([a,T^2(b)]+[T(a),T(b)]+\mu[a,T(b)]\big)+\lambda[a,T(b)]\\
			&+T\big([T(a),T(b)]+[T^2(a),b]+\mu[T(a),b]\big)+\lambda[T(a),b]\\
			&+\mu T\big([T(a),b]+[a,T(b)]+\mu[a,b]\big)+\lambda\mu[a,b]\\
		\end{align*}
	On the other hand,
	 \begin{align*}
	 	&T\big([T(a), b]_\star+[a,T(b)]_\star+\mu[a,b]_\star\big)+\lambda [a,b]_\star\\
	 	=&T\big([T(a),T(b)]+[T^2(a),b]+\mu [T(a),b]\big)\\
	 	&+T\big([a,T^2(b)]+[T(a),T(b)]+\mu[a,T(b)]\big)\\
	 	&+\mu T\big([T(a),b]+[a,T(b)]+\mu[a,b]\big)\\
	 	&+\lambda \big([T(a),b]+[a,T(b)]+\mu[a,b]\big)\\
	 	=&[Ta, Tb]_\star
	 \end{align*}
	\end{proof}

	One can also construct the new extended Rota-Baxter representation from old ones. 
	\begin{prop}\label{descending property}
		Let $(\frakg,\pi,T)$ be an extended Rota-Baxter Lie algebra of weight $(\mu,\lambda)$ and $(M,T_M)$ be an extended Rota-Baxter representation over it. Define a new action $[-,-]_t$ of $\frakg$ on $M$ as follows: for any $a\in \frakg,m\in M$,
		\begin{eqnarray}
			[a, m]_t:&=& [T(a),m]-T_M([a,m]).
		\end{eqnarray}
		Then these actions make $M$ into an extended Rota-Baxter representation over $\frakg_\star$ and denote this new   extended Rota-Baxter representation by $M_t$.
	\end{prop}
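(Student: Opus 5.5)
Two things have to be checked: (i) $[-,-]_t$ is a Lie algebra representation of $\frakg_\star$ on $M$, and (ii) $T_M$ satisfies the extended Rota-Baxter representation identity of Definition~\ref{Def: Rota-Baxter rep} with respect to $T$, $\frakg_\star$ and $[-,-]_t$. The cleanest route is to reduce both to statements already available, using the semi-direct product of Proposition~\ref{Prop: trivial extension of Rota-Baxter rep}. Consider $\frakg\ltimes M$ with operator $\mathcal T=T\oplus T_M$; it is an extended Rota-Baxter Lie algebra of weight $(\mu,\lambda)$. Applying Proposition~\ref{Prop: new RB algebra} to it gives the extended Rota-Baxter Lie algebra $(\frakg\ltimes M)_\star$, whose bracket is
\[
[(a,m),(b,n)]_\star=[(a,m),\mathcal T(b,n)]+[\mathcal T(a,m),(b,n)]+\mu[(a,m),(b,n)].
\]
Expanding, the $\frakg$-component is $[a,b]_\star$. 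The $M$-component is
\[
[a,T_Mn]+[Ta,n]+\mu[a,n]-[b,T_Mm]-[Tb,m]-\mu[b,m].
\]
This is not yet the semi-direct product by $[-,-]_t$, so $M$ itself is not the right ideal to look at.

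The fix is to use the twisted embedding. The subspace $M$ is an abelian ideal of $(\frakg\ltimes M)_\star$, and the quotient is $\frakg_\star$. Instead of the obvious splitting, consider the linear map
\[
\Phi:\frakg\oplus M\to \frakg\oplus M,\qquad \Phi(a,m)=(a,\,m+\text{correction}).
\]
My first attempt, though, is more direct: verify (i) by computing
\[
[a,[b,m]_t]_t-[b,[a,m]_t]_t-[[a,b]_\star,m]_t.
\]
Expand everything using $[a,m]_t=[Ta,m]-T_M[a,m]$. The terms $[Ta,[Tb,m]]-[Tb,[Ta,m]]=[[Ta,Tb],m]$ cancel against the part of $[[a,b]_\star,m]_t$ coming from $[T[a,b]_\star,m]$, after using $T[a,b]_\star=[Ta,Tb]-\lambda[a,b]$. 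The mixed terms $[Ta,T_M[b,m]]$ are rewritten via the representation identity, with $\rho(a)\mapsto$ action of $Ta$ and argument $[b,m]$:
\[
[Ta,T_M x]=T_M\big([a,T_Mx]+[Ta,x]+\mu[a,x]\big)+\lambda[a,x].
\]
Then collect the terms: the $\lambda$ terms cancel against $-\lambda[[a,b],m]$, and the $T_M(\cdots)$ and $T_M T_M(\cdots)$ terms cancel by the Jacobi identity for the original action.

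For (ii), I need
\[
[Ta,T_Mm]_t=T_M\big([a,T_Mm]_t+[Ta,m]_t+\mu[a,m]_t\big)+\lambda[a,m]_t.
\]
The left side is $[T^2a,T_Mm]-T_M[Ta,T_Mm]$. Apply the representation identity to $[T(Ta),T_Mm]$ and to $[Ta,T_Mm]$, exactly mirroring the computation in the proof of Proposition~\ref{Prop: new RB algebra}. Both sides then reduce to the same combination of $T_M$, $T_M^2$ and $\lambda$ terms, just as that proof matched the two expansions line by line.

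The main obstacle is the bookkeeping in (i). It involves terms of the form $T_M[Ta,[b,m]]$ and $T_M T_M$ nested terms, and the identity has to be applied twice, with the correct sign conventions for the action on the right ($[b,m]$ versus $-[m,b]$). I would organize the computation by degree in $T_M$ (zero, one, two) and by the $\lambda$ terms, and check cancellation separately in each group. That grouping is what makes the proof of Proposition~\ref{Prop: new RB algebra} tractable, and I expect it to work here too.
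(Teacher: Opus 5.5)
Your proposal is correct and, once the abandoned semidirect-product/twisted-embedding detour is dropped (it plays no role), it is the paper's direct verification. For (ii), the paper expands $[T(a),T_M(m)]_t$, applies the representation identity to $[T^2a,T_Mm]$ on the left and to $[Ta,T_Mm]$ inside $T_M$ on the right, exactly as you plan; the paper leaves (i) as an exercise, and your sketch of it is sound (more precisely, the terms $T_M[a,T_M[b,m]]$ and $T_M[b,T_M[a,m]]$ cancel outright, while the remaining single-$T_M$ and $\mu$ terms recombine, via the representation property of the original action, into $-T_M[[a,b]_\star,m]$).
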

	  
	  \begin{proof}
	 	Firstly, notice that $M_t$ is a representation over the Lie algebra $\frakg_\star$ and we leave it as an exercise.
	 	
	 	Then we show that $M_t$ is an extended Rota-Baxter representation over $\frakg_\star$. That is, for any $a\in \frakg_\star$ and $m\in M_t$,  
	 	
	 	$$\begin{array}{rcl}
	 		[T(a), T_M(m)]_t&=&[T^2(a),T_M(m)]-T_M[T(a),T_M(m)]\\
	 		&=&  T_M ([T(a),T_M(m)] + [T^2(a),m] +\mu  [T(a),m] )+\lambda [T(a),m]-T_M[T(a),T_M(m)]  \\
	 		&=& T_M\big(   [T^2(a),m] +\mu  [T(a),m] \big)+\lambda [T(a),m] 
	 	\end{array}$$
	 	and
	 	$$\begin{array}{rl}
	 		& T_M\big([a, T_M(m)]_t+[T(a), m]_t+\mu [a, m]_t\big)+\lambda [a, m]_t\\
	 		=&T_M\big( [T(a),T_M(m)]-T_M[a,T_M(m)]+ [T^2(a),m]-T_M[T(a),m]+\mu  [T(a),m]-\mu T_M[a,m]\big)\\
	 		&+\lambda ([T(a), m]-T_M[a,m])\\
	 		=& T_M\big( T_M\big([a,T_M(m)]+[T(a),m]+\mu [a,m]\big)+\lambda [a,m] -T_M[a,T_M(m)]+ [T^2(a),m]-T_M[T(a),m]\\
	 		&+\mu  [T(a),m]-\mu T_M[a,m]\big)+\lambda ([T(a), m]-T_M[a,m])\\
	 		=& T_M\big(   [T^2(a),m] +\mu [T(a),m] \big)+\lambda [T(a), m]\\
	 		=& [T(a), T_M(m)]_t.
	 	\end{array}$$
	 \end{proof}

	\bigskip

	\section{The MC characterization and the cohomology of Lie algebras and of extended Rota-Baxter operators}\ \label{sec 3}

	 As ingredients of extended Rota-Baxter Lie algebras, we now provide a further characterization of Lie algebras and extended Rota-Baxter operators.
	 
	\subsection{Curved dg Lie algebras}\ \label{subsec 3.1}
	
	The controlling algebra of extended Rota-Baxter operators  is not a dg Lie algebra but a curved dg Lie algebra. So it is necessary to introduce it.
	
	\begin{defn}\label{curved dg Lie}
		\textbf{A curved dg Lie algebra} is a quadruple $(\frakg,d_0,d_1,d_2)$ where $\frakg$ is a graded space and three graded linear operators $d_0:\bfk \rightarrow \frakg,d_1:L\rightarrow \frakg$ and $d_2:\frakg\otimes \frakg\rightarrow \frakg$  satisfy the following conditions :\\
		$(0) \quad d_2(x,y)=(-1)^{|x||y|} d_2(y,x)$,\\
		$(1) \quad d_1\circ d_0=0$,\\
		$(2) \quad d_1\circ d_1(x) +d_2(d_0(1), x)=0$,\\
		$(3)\quad d_1\circ d_2+d_2\circ (d_1\otimes Id+ Id\otimes d_1)=0$,\\
		$(4) \quad d_2(d_2(x,y),z)+(-1)^{|x||y|+|x||z|}d_2(d_2(y,z),x)+(-1)^{|y||z|}d_2(d_2(x,z),y)=0$ (Jacobi identity).\\
		For homogeneous elements $x,y,z\in \frakg$.
	\end{defn}
	
	\begin{rmk}
		If $d_0=0$, it's just a dg Lie algebra. Moreover, if $d_0=0,d_1=0$, it's just a graded Lie algebra. Note that it  is slightly different from the definition of ordinary graded Lie algebras. 
		
			\textbf{An ordinary graded  Lie algebra} is a pair $(\frakg,[-,-])$ where $\frakg$ is a graded space with the bracket $[-,-]:\frakg\otimes \frakg\rightarrow \frakg$ that satisfies the following conditions :\\
		$(1) \quad [x,y]=-(-1)^{|x||y|} [y,x]$,\\
		$(2) \quad [[x,y],z]+(-1)^{|x||y|+|x||z|}[[y,z],x]-(-1)^{|y||z|}[[x,z],y]=0$ (Jacobi identity),\\
		for homogeneous elements $x,y,z\in \frakg$.
		
		Given an ordinary graded  Lie algebra  $(s\frakg,[-,-])$, then $(\frakg,\pi)$ is a graded  Lie algebra where $\pi(sx,sy)=(-1)^{|x|}s[x,y]$ for  homogeneous elements $x,y\in \frakg$.
	\end{rmk}
	
	\begin{defn}
		Given a curved dg Lie algebra $(\frakg,d_0,d_1,d_2)$. An element $\alpha\in \frakg^0$ is called a Maurer-Cartan element if and only if it satisfies the Maurer-Cartan equation:\\
		$$d_0(1)+d_1(\alpha)+\frac{1}{2}d_2(\alpha,\alpha)=0.$$
	\end{defn}
	
	\begin{prop}[Twisting procedure] \label{Prop: twist curved dg Lie}
		Let  $\alpha$ be a Maurer-Cartan element of the curved dg Lie algebra $\frakg$. The twisted dg Lie algebra on $\frakg$ is given by $d_n ^{\alpha}: \frakg^{\ot n}\rightarrow \frakg$ for $n=1,2$ which is defined as follows$\colon$
		\begin{eqnarray*}\label{Eq: twisted curved dg Lie algebra}  \quad d^\alpha_1(x)=d_{1} (x)+d_2(\alpha,x),\quad d^\alpha_2(x,y)=d_2(x,y),\ \forall x,y\in \frakg.\end{eqnarray*}
		Denote $\mathcal{MC}(\frakg):=\{\mbox{Maurer-Cartan elements of}~ \frakg\}$.
	\end{prop}
	
	\medskip
	
	\subsection{The MC characterization and the cohomology of Lie algebras}\ \label{subsec 3.2}
	
	We fix an integer $n\geq 1$.
	For $0\leq i_1, \dots, i_r\leq n$ with $i_1+\cdots+i_r=n$,  $\Sh(i_1, i_2,\dots,i_r)$ is the   set of $(i_1,\dots, i_r)$-shuffles, i.e., the permutation $\sigma\in S_n$ such that
	$$\sigma(1)<\sigma(2)<\dots<\sigma(i_1),  \ \sigma(i_1+1)< \dots<\sigma(i_1+i_2),\ \dots,\
	\sigma(i_1+\cdots+i_{r-1}+1)< \cdots<\sigma(n).$$
	
	\begin{defn}(\cite{NR67})
		Let $\frakg$ be a vector space. Consider the graded vector space $ \mathrm{Hom}(\bigwedge (\frakg),\frakg):=\bigoplus_{n=-1}^{\infty}\mathrm{Hom}(\wedge^{n+1}\frakg,\frakg)$ and we say the degree  of $f$ is $n$ if $f\in\mathrm{Hom}(\wedge^{n+1}\frakg,\frakg)$. 
     	Define the  \textbf{Nijenhuis-Richardson bracket} $[\cdot,\cdot]_{\bf{NR}}$ on $\mathrm{Hom}(\bigwedge (\frakg),\frakg)$  as follows:
		for any $P\in \mathrm{Hom}(\wedge^{p+1}\frakg,\frakg)$ and $Q\in \mathrm{Hom}(\wedge^{q+1}\frakg,\frakg)$,
		$$[P,Q]_{\bf{NR}}=P\bar{\circ} Q-(-1)^{pq}Q\bar{\circ} P,$$
		where $P\bar{\circ} Q$ is defined by
		\begin{align*}
			(P \bar{\circ} Q)(x_{1}, \ldots, x_{p+q+1})=\sum_{\sigma \in \Sh(q+1, p)}sgn(\sigma) P(Q(x_{\sigma(1)}, \ldots, x_{\sigma(q+1)}), x_{\sigma(q+2)}, \ldots,  x_{\sigma(p+q+1)}).
		\end{align*}
	\end{defn}
	
	\begin{prop}\label{gla for Lie}\
		
		\begin{enumerate}
			\item 	Let $\frakg$ be a vector space. Then $(s\mathrm{Hom}(\bigwedge (\frakg),\frakg),[\cdot,\cdot])$ is a graded Lie algebra(aka. gla) where we define $[sx,sy]:=(-1)^{|x|}s[x,y]_{\bf{NR}}$ for any $x\in \mathrm{Hom}(\wedge^{p+1}\frakg,\frakg)$ and $y\in \mathrm{Hom}(\wedge^{q+1}\frakg,\frakg)$.
			\item An element  $s\pi \in s\mathrm{Hom}(\wedge^{2}\frakg,\frakg)$  is a Maurer-Cartan element   if and only if $\pi$ is a Lie bracket on $\frakg$.
		\end{enumerate}
	\end{prop}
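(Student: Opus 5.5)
Both parts reduce to classical facts about the Nijenhuis--Richardson bracket, so the plan is to transport them across the shift $s$ and then read off the Maurer--Cartan equation.

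\textbf{Part (1).} I will use the classical theorem of Nijenhuis and Richardson \cite{NR67}: $(\mathrm{Hom}(\bigwedge(\frakg),\frakg),[\cdot,\cdot]_{\bf NR})$, graded by $\deg f=n$ for $f\in\mathrm{Hom}(\wedge^{n+1}\frakg,\frakg)$, is an ordinary graded Lie algebra. I would sketch this as follows.
\begin{itemize}
\item Graded antisymmetry is immediate from the definition $[P,Q]_{\bf NR}=P\bar\circ Q-(-1)^{pq}Q\bar\circ P$.
\item For the Jacobi identity, check that $\bar\circ$ is graded right pre-Lie. That is, the associator $(P\bar\circ Q)\bar\circ R-P\bar\circ(Q\bar\circ R)$ is graded symmetric in $Q$ and $R$.
\item To see this, note that the associator expands into a sum over shuffles in which $Q$ and $R$ are inserted into two distinct arguments of $P$. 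Swapping the roles of $Q$ and $R$ permutes these terms, with the Koszul sign $(-1)^{qr}$.
\item Any graded pre-Lie product yields a graded Lie algebra via its commutator.
\end{itemize}

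Next I transfer this structure through the shift. By the preceding Remark, an ordinary graded Lie algebra structure on $L$ corresponds to the symmetric-type structure $\pi(sx,sy)=(-1)^{|x|}s[x,y]$ required in Definition~\ref{curved dg Lie} (with $d_0=d_1=0$). Applying this with $L=\mathrm{Hom}(\bigwedge\frakg,\frakg)$ gives exactly the bracket $[sx,sy]=(-1)^{|x|}s[x,y]_{\bf NR}$ of the statement.

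It remains to verify that axioms (0) and (4) hold for this bracket. Write $|sx|=|x|-1$.
\begin{itemize}
\item Axiom (0) follows from graded antisymmetry of $[\cdot,\cdot]_{\bf NR}$. One needs the sign identity $(-1)^{|x|}\cdot(-1)^{|x||y|+1}=(-1)^{|y|}(-1)^{(|x|-1)(|y|-1)}$, which holds since both sides equal $(-1)^{|x||y|+|x|+1}$.
\item Axiom (4) follows from the Jacobi identity for $[\cdot,\cdot]_{\bf NR}$ after multiplying by the appropriate global sign. This sign bookkeeping is the main (purely routine) obstacle. I would handle it by tracking the Koszul signs of moving $s$ past elements, rather than expanding everything.
\end{itemize}

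\textbf{Part (2).} Here $d_0=d_1=0$, so the Maurer--Cartan equation reads $\tfrac12[s\pi,s\pi]=0$ for $s\pi$ of degree $0$. Since $\pi$ has NR-degree $1$, we get $[s\pi,s\pi]=-s[\pi,\pi]_{\bf NR}=-2s(\pi\bar\circ\pi)$. Evaluating directly,
\[
(\pi\bar\circ\pi)(x_1,x_2,x_3)=\sum_{\sigma\in\Sh(2,1)}sgn(\sigma)\pi(\pi(x_{\sigma(1)},x_{\sigma(2)}),x_{\sigma(3)}),
\]
which equals $\pi(\pi(x_1,x_2),x_3)-\pi(\pi(x_1,x_3),x_2)+\pi(\pi(x_2,x_3),x_1)$. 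Using antisymmetry of $\pi$, this is the Jacobiator of $\pi$.

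Hence $s\pi$ is Maurer--Cartan if and only if $\pi$, which is already antisymmetric as an element of $\mathrm{Hom}(\wedge^2\frakg,\frakg)$, satisfies the Jacobi identity. That is exactly the condition that $\pi$ is a Lie bracket, using $\mathrm{char}\,\bfk=0$ to divide by $2$.
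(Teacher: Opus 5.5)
Your proposal is correct and follows the same route as the paper. The paper simply invokes the Nijenhuis--Richardson theorem that $(\mathrm{Hom}(\bigwedge(\frakg),\frakg),[\cdot,\cdot]_{\mathrm{NR}})$ is an ordinary graded Lie algebra and asserts that both parts follow. You additionally carry out the details it leaves implicit: the shift-sign checks for axioms (0) and (4), and the explicit computation showing that $[s\pi,s\pi]=-2s(\pi\bar\circ\pi)$ is, up to a nonzero scalar, the Jacobiator of $\pi$.
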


   \begin{rmk}
   	$(\mathrm{Hom}(\bigwedge (\frakg),\frakg),[\cdot,\cdot]_{\bf{NR}})$ is an ordinary graded Lie algebra by \cite{NR67}. It follows that the above proposition holds.
   \end{rmk}
	
	Let $(\frakg,\pi)$ be a Lie algebra. Then  $s\pi$  is a Maurer-Cartan element of $(s\mathrm{Hom}(\bigwedge (\frakg),\frakg),[\cdot,\cdot])$, hence twisting by $s\pi$, we have a new dg Lie algebra. 
	
	\begin{defn}\label{cohomology of algebra}
		Let $(\frakg,\pi)$ be a Lie algebra. \textbf{The cochain complex of $\frakg$} is defined to be $(\rmC^*_{\Lie}(\frakg):=\mathrm{Hom}(\bigwedge (\frakg),\frakg),\partial_{\Lie}:=[\pi,-]_{\bf{NR}})$ induced by the new dg Lie algebra above. The corresponding cohomology is called \textbf{the cohomology of $\frakg$}.  We could see that they are just the Chevalley-Eilenberg cochain complex and Chevalley-Eilenberg cohomology of $\frakg$. More precisely,\\
		
		the coboundary operator $$\partial_{\Lie}^n: \mathrm{Hom}(\wedge^n \frakg,\frakg)\longrightarrow  \mathrm{Hom}(\wedge^{n+1} \frakg,\frakg), n\geq 0$$ is given by
		\[\begin{split}
			\partial_{\Lie}^n (f)(x_1,\dots,x_{n+1})&=\sum_{i=1}^{n+1}(-1)^{i+n}[x_i, f(x_1,\dots,\hat{x}_i, \dots, x_{n+1})]\\
			&+\sum_{1\leq i<j\leq n+1} (-1)^{i+j+n+1}f([x_i, x_j], x_1,\dots,\hat{x}_i, \dots,\hat{x}_j, \dots,x_{n+1}),
		\end{split}\]
		for all $f\in \mathrm{Hom}(\wedge^n \frakg,\frakg),~x_1,\dots, x_{n+1}\in \frakg$, where $\hat{x_i}$ means deleting this element.
	\end{defn}

	\medskip
	
	\subsection{The MC characterization and the cohomology of extended Rota-Baxter operators}\
	
	Let $(\frakg,\pi)$ be a Lie algebra.

	  Define three graded linear operators as follows,  for $f\in\mathrm{Hom}(\wedge^{n+1}\frakg,\frakg), g\in \mathrm{Hom}(\wedge^{m+1}\frakg,\frakg)$.
	
	$d_0:\bfk \rightarrow \mathrm{Hom}(\bigwedge (\frakg),\frakg)$ is given by $d_0(1)=-\lambda \pi$,\\
	$d_1=d:\mathrm{Hom}(\bigwedge (\frakg),\frakg)\rightarrow \mathrm{Hom}(\bigwedge (\frakg),\frakg)$ is given by $d_1(f)=(-1)^{|f|+1}\mu f\bar{\circ} \pi$,\\
	and $d_2=[-,-]:\mathrm{Hom}(\bigwedge (\frakg),\frakg)\otimes \mathrm{Hom}(\bigwedge (\frakg),\frakg)\rightarrow \mathrm{Hom}(\bigwedge (\frakg),\frakg)$ is given by \\
	$d_2(f,g)(x_1,\dots,x_{m+n+2})=\sum_{\sigma\in \Sh(n+1,m+1)}sgn(\sigma)(-1)^{(n+1)m}\pi(f(x_{\sigma(1)},\dots,x_{\sigma(n+1)}),g(x_{\sigma(n+2)},\dots,x_{\sigma(n+m+2)}))$\\
	$-(-1)^{(n+1)m}\sum_{\sigma\in \Sh(n+1,1,m)}sgn(\sigma)g(\pi(f(x_{\sigma(1)},\dots,x_{\sigma(n+1)}),x_{\sigma(n+2)}),x_{\sigma(n+3)},\dots,x_{\sigma(n+m+2)})$\\
	$-(-1)^{n}\sum_{\sigma\in \Sh(m+1,1,n)}sgn(\sigma)f(\pi(g(x_{\sigma(1)},\dots,x_{\sigma(m+1)}),x_{\sigma(m+2)}),x_{\sigma(m+3)},\dots,x_{\sigma(n+m+2)})$.

		\begin{thm}
		With the above notation, then $(\mathrm{Hom}(\bigwedge (\frakg),\frakg),d_0,d, [-,-])$ forms a curved dg Lie algebra.
	\end{thm}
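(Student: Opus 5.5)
Rather than verify the four axioms of Definition~\ref{curved dg Lie} by brute force, I plan to obtain them from a derived-bracket construction inside the graded Lie algebra of Proposition~\ref{gla for Lie}. Concretely, I set $L:=s\mathrm{Hom}(\bigwedge(\frakg),\frakg)$ with bracket $[sx,sy]=(-1)^{|x|}s[x,y]_{\NR}$. Here $\Theta:=s\pi$ satisfies $[\Theta,\Theta]=0$ because $\pi$ is a Lie bracket. The subspace $\mathfrak a:=s\mathrm{Hom}(\bigwedge(\frakg),\frakg)$ with arity shifted so that $f\in\mathrm{Hom}(\wedge^{n+1}\frakg,\frakg)$ has degree $n$ is abelian for the relevant part of the bracket. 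Following Voronov's higher derived brackets, in the "generalised" form the introduction alludes to, I will check that the three given operations are, up to the stated signs, the derived brackets
\begin{equation*}
d_0(1)\leftrightarrow P[\Theta_{\lambda}], \qquad d_1(f)\leftrightarrow P[\Theta_\mu, f], \qquad d_2(f,g)\leftrightarrow P[[\Theta,f],g].
\end{equation*}
In this dictionary, $\Theta_\mu=\mu\Theta$ and $\Theta_\lambda=-\lambda\Theta$ are multiples of $s\pi$ placed in suitable components. The precise identification is the first step: expand $[[s\pi,sf],sg]$ with the Nijenhuis--Richardson formula and match it with the three shuffle sums in the definition of $d_2$. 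The term $\pi(f,g)$ comes from $\pi\bar\circ f$ followed by $\bar\circ g$. The two terms $g(\pi(f,\cdot),\dots)$ and $f(\pi(g,\cdot),\dots)$ come from $g\bar\circ(\pi\bar\circ f)$ and $f\bar\circ(\pi\bar\circ g)$. The remaining pieces cancel by the graded Jacobi identity together with $[\pi,\pi]_{\NR}=0$.

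With the identification in hand, the axioms follow in order.

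Axiom $(0)$, graded symmetry of $d_2$, is visible from the formula after exchanging shuffles. It also follows from the derived-bracket symmetry, since $[f,g]_{\NR}$ is discarded on the abelian part.

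Axiom $(4)$, the Jacobi identity for $d_2$, follows from Voronov's theorem: $[\Theta,\Theta]=0$ implies that the binary derived bracket satisfies Jacobi.

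Axiom $(1)$, $d_1 d_0(1)=0$, amounts to $-\lambda\mu\,\pi\bar\circ\pi=0$. This holds because $\pi\bar\circ\pi=\tfrac12[\pi,\pi]_{\NR}=0$.

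Axiom $(2)$ says $d_1^2 f=-d_2(d_0(1),f)=\lambda\, d_2(\pi,f)$. I compute $d_1^2f=\mu^2 (f\bar\circ\pi)\bar\circ\pi$ up to sign. By the pre-Lie identity for $\bar\circ$, $(f\bar\circ\pi)\bar\circ\pi-f\bar\circ(\pi\bar\circ\pi)$ is graded symmetric in the two $\pi$'s, and since $\pi$ has odd degree in the shifted grading this associator vanishes. Hence $d_1^2 f=\pm\mu^2 f\bar\circ(\pi\bar\circ\pi)=0$. So it remains to show $d_2(\pi,f)=0$.

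For $d_2(\pi,f)=0$, substitute $\pi$ into the formula for $d_2$. The first sum gives $\pi(\pi(x_i,x_j),f(\dots))$. The second and third give $f(\pi(\pi(x_i,x_j),x_k),\dots)$ and $\pi(\pi(f(\dots),x_k),x_l)$. These cancel by the Jacobi identity of $\pi$ once the shuffles are grouped, which is the derived-bracket statement $[[\Theta,\Theta],f]=0$. If the intended degree conventions instead leave a nonzero term here, the fallback is to verify directly that it equals $\mu^2$ times the associator and cancels.

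Axiom $(3)$, the derivation property of $d_1$ over $d_2$, reduces to $\mu\big((d_2(f,g))\bar\circ\pi-d_2(f\bar\circ\pi,g)\mp d_2(f,g\bar\circ\pi)\big)=0$. This is again a pre-Lie/Jacobi consequence: $d_1=\pm\mu[-,\pi]$ on the relevant part, and $[\pi,-]$ is a derivation of the derived bracket because $[\Theta,\Theta]=0$.

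The main obstacle is the sign bookkeeping in the first step: matching the three shuffle sums of $d_2$, with their factors $(-1)^{(n+1)m}$ and $(-1)^n$, to the derived bracket under the shift $s$. A second, related issue is confirming that $d_1$ really is the restriction of $[\pi,-]$ rather than only of $-\bar\circ\pi$. If the clean identification fails, I will verify $(2)$ and $(3)$ directly on arities $n+1,m+1$ by splitting each composite into the terms where $\pi$ lands on two inputs, on an input and an output, or on two outputs, and cancelling class by class using the Jacobi identity of $\pi$.
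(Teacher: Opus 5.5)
\paragraph{The gap.} The derived-bracket dictionary you set up does not exist in the algebra you chose, and the obstruction is not sign bookkeeping. Your $\mathfrak a$ coincides with $L=s\Hom(\bigwedge\frakg,\frakg)$ itself, which is not abelian. So there is no projection available to discard unwanted composites.

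Concretely, for $f,g\in\Hom(\frakg,\frakg)$ a direct expansion gives
\[
[[\pi,f]_{\NR},g]_{\NR}(x,y)-d_2(f,g)(x,y)=[fg(x),y]+[x,fg(y)]+gf([x,y]).
\]
In these terms $g$ is fed directly into $f$, or $f$ into $g$, and they have no counterpart in $d_2$. No choice of signs removes them, nor does the Jacobi identity or $[\pi,\pi]_{\NR}=0$.

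Similarly, $d_1(f)=\pm\mu f\bar\circ\pi$ is not $\pm\mu[f,\pi]_{\NR}$. The piece $\pi\bar\circ f$ survives; for $f$ of arity one it equals $[f(x),y]+[x,f(y)]$.

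Consequences for the axioms:
\begin{itemize}
\item Your proof of axiom $(4)$ rests entirely on Voronov's theorem applied to this false identification, so it has no foundation.
\item Your proof of axiom $(3)$ rests on $d_1$ being $\pm\mu[-,\pi]$, which fails.
\item Your fallback covers only $(2)$ and $(3)$. The Jacobi identity for $d_2$, which is the substantial part of the statement, is left unproved.
\end{itemize}
Your arguments for $(0)$, $(1)$ and $(2)$ are fine: $d_1^2=0$ follows from the pre-Lie identity and $\pi\bar\circ\pi=0$, and $d_2(\pi,f)=0$ is indeed a Jacobi cancellation.

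\paragraph{The missing idea.} You need to double the space, so that $\pi$, its adjoint action, $\mu\pi$ and $-\lambda\pi$ sit in genuinely different components. Set up the data as follows:
\begin{itemize}
\item $\frakL=\Hom(\bigwedge(\frakg\oplus\frakg'),\frakg\oplus\frakg')$ with the NR bracket;
\item $\fraka=\Hom(\bigwedge\frakg',\frakg)$, with $P$ the projection onto it;
\item $\Delta$ the Lie bracket of $\frakg\otimes\bfk[\epsilon]/(\epsilon^2-\mu\epsilon+\lambda)\cong\frakg\oplus\frakg'$.
\end{itemize}
The components of $\Delta$ are $\pi$ on $\frakg$, the adjoint action $\frakg\otimes\frakg'\to\frakg'$, $\mu\pi$ on $\frakg'$, and $-\lambda\pi\colon\wedge^2\frakg'\to\frakg$.

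Then $[\Delta,\Delta]_{\NR}=0$ for free, $\fraka$ is abelian, and $\ker P$ is a subalgebra. Voronov's construction in its version for $\Delta\notin\ker P$ gives a curved $L_\infty[1]$-structure on $\fraka$ with curvature $P\Delta$.

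Grade by (number of $\frakg$-inputs) minus (number of $\frakg$-outputs). This grading is additive for the NR bracket and equals $-1$ exactly on $\fraka$. It shows that the four components of $\Delta$ contribute only to $l_2,l_2,l_1,l_0$ respectively. Hence $l_0=-\lambda\pi$, $l_1=\pm\mu(-)\bar\circ\pi$, $l_2=d_2$ (up to the usual sign normalisations), and $l_{\geq 3}=0$. It is precisely $P$ that kills the $f\circ g$ composites displayed above.

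\paragraph{Comparison with the paper.} Carried out this way, your idea gives a short proof independent of the paper's. The paper instead obtains the statement by inserting $s\pi$ into its $L_\infty[1]$-algebra on $s\frakm\oplus\fraka$. That route depends on the long verification of that algebra in Appendix~1.
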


	\begin{rmk}
		The above theorem can be proved by direct computation. Here we will give an alternative proof in Corollary \ref{curved dg Lie of operator} by using
		Theorem \ref{thm: Linfinity for extend Lie}.
	\end{rmk}

	\begin{thm}\label{MC elements Lie}
		With the above notation.  Let $T\in\mathrm{Hom}(\frakg,\frakg)$, then
		$T\in \mathcal{MC}(\mathrm{Hom}(\bigwedge (\frakg),\frakg))$ if and only if $T$ is  an extended Rota-Baxter operator of weight  $(\mu,\lambda)$ on $\frakg$.
	\end{thm}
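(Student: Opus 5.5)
Since $T\in\mathrm{Hom}(\frakg,\frakg)=\mathrm{Hom}(\wedge^{1}\frakg,\frakg)$ has degree $0$, the statement reduces to evaluating the Maurer-Cartan expression
$$d_0(1)+d(T)+\tfrac12[T,T]$$
on a pair $x_1,x_2\in\frakg$. The plan is to compute each of the three terms explicitly from the definitions of $d_0,d_1,d_2$ above with $n=m=0$, and then show that their sum is a nonzero scalar multiple of
$$[T(x_1),T(x_2)]-T\big([T(x_1),x_2]+[x_1,T(x_2)]+\mu[x_1,x_2]\big)-\lambda[x_1,x_2].$$
Vanishing of the Maurer-Cartan expression is then equivalent to the identity \eqref{eq:diffwt}.

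\textbf{Step 1.} The first term is $d_0(1)(x_1,x_2)=-\lambda[x_1,x_2]$.

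\textbf{Step 2.} For $d(T)=(-1)^{|T|+1}\mu\,T\bar\circ\pi=-\mu\,T\bar\circ\pi$, take $P=T$ with $p=0$ and $Q=\pi$ with $q=1$. Only the identity shuffle in $\Sh(2,0)$ contributes, so $T\bar\circ\pi(x_1,x_2)=T([x_1,x_2])$. Hence $d(T)(x_1,x_2)=-\mu T([x_1,x_2])$.

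\textbf{Step 3.} Compute $d_2(T,T)$ with $n=m=0$, so every sign $(-1)^{(n+1)m}$ and $(-1)^n$ equals $1$.
\begin{itemize}
\item The first sum runs over $\Sh(1,1)$ and gives $[T(x_1),T(x_2)]-[T(x_2),T(x_1)]=2[T(x_1),T(x_2)]$.
\item The second sum runs over $\Sh(1,1,0)$. It gives $-\big(T([T(x_1),x_2])-T([T(x_2),x_1])\big)=-T([T(x_1),x_2])-T([x_1,T(x_2)])$.
\item The third sum is identical to the second, since $f=g=T$.
\end{itemize}
Therefore
$$\tfrac12[T,T](x_1,x_2)=[T(x_1),T(x_2)]-T([T(x_1),x_2])-T([x_1,T(x_2)]).$$

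\textbf{Step 4.} Summing the three terms gives exactly the displayed expression, with coefficient $1$. This proves both directions of the equivalence at once.

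The only real obstacle is sign bookkeeping: the convention for $\mathrm{sgn}(\sigma)$ on the three-block shuffles $\Sh(1,1,0)$, and the sign in $d_1$. I would check these against the conventions in the definitions above, and sanity-check the result on the known special case $\lambda=\mu=0$, where $[T,T]=0$ should recover the usual weight-zero Rota-Baxter identity. If a sign discrepancy appeared, it would only change the overall factor, not the equivalence, as long as the three terms stay in the proportion $1:1:1$. That proportion is exactly what the computation above checks.
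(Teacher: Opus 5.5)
Your proposal is correct and is essentially the paper's argument: evaluate $d_0(1)+d(T)+\frac{1}{2}[T,T]=-\lambda\pi-\mu\, T\bar{\circ}\pi+\frac{1}{2}[T,T]$ on a pair $x_1,x_2$ and recognize the identity $[T(x_1),T(x_2)]=T\big([T(x_1),x_2]+[x_1,T(x_2)]+\mu[x_1,x_2]\big)+\lambda[x_1,x_2]$. Your term-by-term expansion of the three sums in $d_2(T,T)$ is correct, with all signs equal to $1$ since $n=m=0$. It is more explicit than the paper's proof, which only refers to the analogous computation in the $L_\infty[1]$ Maurer--Cartan theorem.
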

	\begin{proof}
		\begin{equation*}
			\begin{aligned}
				0 &= 	d_0(1)+d(T)+\frac{1}{2}[T,T]  \\
				&=  -\lambda \pi-\mu T\circ \pi+ \frac{1}{2} [T,T].
			\end{aligned}
		\end{equation*}
		This equation of $T$ coincides with the one obtained in the proof of Theorem \ref{Thm: MC elements in ex Linifnity}.
	\end{proof}
	
	By Theorem~\ref{MC elements Lie}, an extended Rota-Baxter operator $T$ is a Maurer-Cartan element in the curved dg Lie algebra $\mathrm{Hom}(\bigwedge (\frakg),\frakg)$. Twisting $\mathrm{Hom}(\bigwedge (\frakg),\frakg)$ by  $T$ yields a new differential.   
	
	\begin{prop-def}\label{cohomology of operator}
		Consider a Lie algebra $\frakg$ and $T$ is an extended Rota-Baxter operator on $\frakg$. \textbf{The cochain complex of the extended Rota-Baxter operator  $T$} is defined to be $(\rmC^*_{\RB}(\frakg):=\mathrm{Hom}(\bigwedge (\frakg),\frakg),\partial_\RB:=d_{1}^{T})$. The corresponding  cohomology group is called \textbf{the cohomology of the extended Rota-Baxter operator  $T$}. One may observe that this complex (and cohomology group) coincides with the Chevalley-Eilenberg cochain complex (and cohomology group)  of $\frakg_\star$ with coefficients in $\frakg_t$(See Proposition  \ref{Prop: new RB algebra} and \ref{descending property}).
	\end{prop-def}

	\begin{proof}
		For   $x_1,\dots,x_{n}\in \frakg$ and $f\in \Hom(\wedge^{n-1} \frakg,\frakg)$, we have
		\begin{eqnarray*}
			&&(d_1(f)+d_2(T, f))(x_1, \dots, x_{n})\\
			&=&\sum_{1\leq i<j\leq n}(-1)^{i+j+n}\mu f([x_i, x_j]_\mathfrak{g}, x_1,\dots,\hat{x}_i, \dots,\hat{x}_j, \dots,x_n)+\sum_{i=1}^{n}(-1)^{i+n-1}[T(x_i), f(x_1,\dots,\hat{x}_i, \dots, x_n)]_\mathfrak{g}\\
			&&- \sum_{i=1}^{n}(-1)^{i+n-1}T([x_i, f(x_1,\dots,\hat{x}_i, \dots, x_n)]_\mathfrak{g})\\
			&&+\sum_{1\leq i<j\leq n}(-1)^{i+j+n}f([T(x_i), x_j]_\mathfrak{g} + [x_i, T(x_j)]_\mathfrak{g}, x_1,\dots,\hat{x}_i, \dots,\hat{x}_j, \dots,x_n)\\
			&=&\sum_{i=1}^{n}(-1)^{i+n-1}[T(x_i), f(x_1,\dots,\hat{x}_i, \dots, x_n)]_\mathfrak{g}- \sum_{i=1}^{n}(-1)^{i+n-1}T([x_i, f(x_1,\dots,\hat{x}_i, \dots, x_n)]_\mathfrak{g})\\
			&&+\sum_{1\leq i<j\leq n}(-1)^{i+j+n}f([T(x_i), x_j]_\mathfrak{g} + [x_i, T(x_j)]_\mathfrak{g}+\mu[x_i, x_j]_\mathfrak{g}, x_1,\dots,\hat{x}_i, \dots,\hat{x}_j, \dots,x_n)\\
		\end{eqnarray*}
		induces a new differential on $\mathrm{Hom}(\bigwedge (\frakg),\frakg)$ which differs from the usual Chevalley-Eilenberg cochain complex. And  they are just the Chevalley-Eilenberg cochain complex and cohomology of $\frakg_\star$ with coefficient in $\frakg_t$. 
	\end{proof}

	\bigskip

	\section{$L_\infty[1]$-algebras}\ \label{Subsect: Linfinity algebras}\label{sec 4}
	
	In this section, we  recall some preliminaries on $L_\infty[1]$-algebras. It is well known that $L_\infty[1]$-algebras are equivalent to $L_\infty$-algebras. Consequently, all results parallel to those for $L_\infty$-algebras hold. For $L_\infty$-algebras, see \cite{Get09} for more details.
	
	 Let $V=\oplus_{n\in \mathbb{Z}} V^n$ be a graded vector space. Recall that   the graded symmetric algebra $\rmS(V)$ of $V$ is defined to be the quotient of the tensor algebra $\T(V)$ by   the two-sided ideal $I$   generated by
	$x\ot y -(-1)^{|x||y|}y\ot x$ for all homogeneous elements $x, y\in V$. For $x_1\ot\cdots\ot x_n\in V^{\ot n}\subseteq \T(V)$, write $ x_1\odot x_2\odot\dots\odot x_n$ for its image in $\rmS(V)$.
	For homogeneous elements $x_1,\dots,x_n \in V$ and $\sigma\in \rmS_n$ which is  the symmetric group in $n$ variables, the Koszul sign $\varepsilon(\sigma):=\varepsilon(\sigma;  x_1,\dots, x_n)$ is defined by
	$$ x_1\odot x_2\odot\dots\odot x_n=\varepsilon(\sigma)x_{\sigma(1)}\odot x_{\sigma(2)}\odot\dots\odot x_{\sigma(n)}\in \rmS(V).$$
	Denote by $\rmS^n(V)$ the image of $V^{\otimes n}$ in $\rmS(V)$.

	\begin{defn}\label{Def:L[1]-infty}
		An \textbf{$L_\infty[1]$-algebra} is a graded vector space  $\frakg=\bigoplus\limits_{i\in\mathbb{Z}}\frakg^i$   equipped with   a family of graded linear maps $l_n:\frakg^{\ot n}\rightarrow \frakg, n\geq 1$ of degree $1$ satisfying  the following equations:
		for arbitrary  $n\geq 1$,  $ \sigma\in \rm S_n$ and $x_1,\dots, x_n\in \frakg$,
		\begin{enumerate}
			\item[(i)](graded symmetry)
			\begin{equation*} \label{graded sym}
				l_n(x_{\sigma(1)},\dots,x_{\sigma(n)})=\varepsilon(\sigma)l_n(x_1,\dots,x_n),
			\end{equation*}

			\item[(ii)](generalised Jacobi identity)
			\begin{equation*}\label{graded Jacobi}
				\sum_{i=1}^n\sum_{\sigma\in \Sh(i,n-i)}\varepsilon(\sigma)l_{n-i+1}(l_i(x_{\sigma(1)},\dots,x_{\sigma(i)}),x_{\sigma(i+1)},\dots,x_{\sigma(n)})=0.
			\end{equation*}
			
		\end{enumerate}
		
	\end{defn}

	\begin{rmk} \label{Rem: L[1]-infinity for small n}   Let us consider the generalised Jacobi identity for   $n\leq 3$ with the assumption of  generalised  symmetry.
		
		\begin{enumerate}
			\item[(i)]  For $n=1$,  then $l_1 \circ l_1 =0$, that is,  $l_1 $ is a differential.

			\item[(ii)] For $n=2$, then $l_1\circ l_2 +l_2 \circ (l_1 \ot\Id+\Id\ot l_1)=0$, that is , $l_1$ is a derivation with respect to $l_2$.

			\item[(iii)] For $n=3$ and arbitrary homogeneous elements $x_1, x_2, x_3\in L$, we have
			$$\begin{array}{ll} &l_2 (l_2 (x_1\ot x_2)\ot x_3)+(-1)^{|x_1|(|x_2|+|x_3|)} l_2 (l_2 (x_2\ot x_3)\ot x_1)+
				(-1)^{|x_3|(|x_1|+|x_2|)} l_2 (l_2 (x_3\ot x_1)\ot x_2)
				\\
				=&-\Big(l_1 (l_3 (x_1\ot x_2\ot x_3))+ l_3 (l_1  (x_1)\ot x_2\ot x_3 )+(-1)^{|x_1|} l_3 (x_1\ot l_1  (x_2)\ot x_3 )+\\
				&(-1)^{|x_1|+|x_2|} l_3 (x_1\ot x_2\ot l_1  (x_3) )\Big),\end{array}$$
			that is, $l_2$ satisfies the   Jacobi identity up to homotopy.
		\end{enumerate}

	\end{rmk}

	\begin{defn}
		A \textbf{Maurer-Cartan element} of an $L_\infty[1]$-algebra $(\frakg,\{l_n \}_{n\geq1})$ is  an element $\alpha\in \frakg^{0}$   satisfying the Maurer-Cartan equation:
		\begin{eqnarray*}\label{Eq: mc-equation[1]}\sum_{n=1}^\infty\frac{1}{n!} l_n (\alpha^{\ot n})=0,\end{eqnarray*}
		whenever this infinite sum exists.  Denote $\mathcal{MC}(\frakg):=\{\mbox{Maurer-Cartan elements of}~ \frakg\}$.
	\end{defn}

	\begin{prop}[Twisting procedure] \label{Prop: twist-L-infty[1]}
		Let  $\alpha$ be a Maurer-Cartan element of $L_\infty[1]$-algebra $\frakg$.	The twisted $L_\infty[1]$-algebra  is given by $l_n ^{\alpha}: \frakg^{\ot n}\rightarrow \frakg$ which is defined as follows$\colon$
		\begin{eqnarray*}\label{Eq: twisted L[1] infinity algebra} l^\alpha_n(x_1\ot \cdots\ot x_n)=\sum_{i=0}^\infty\frac{1}{i!}l_{n+i} (\alpha^{\ot i}\ot x_1\ot \cdots\ot x_n),\ \forall x_1, \dots, x_n\in \frakg,\end{eqnarray*}
		whenever these infinite sums exist.
	\end{prop}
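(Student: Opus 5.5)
The plan is to verify directly that the operations $l^\alpha_n$, $n\geq 1$, satisfy the two axioms of Definition~\ref{Def:L[1]-infty}, assuming throughout (as the statement does) that all infinite sums involved exist and may be rearranged; for instance $\frakg$ is complete with respect to a filtration compatible with the $l_n$. \emph{Graded symmetry} and the degree are immediate. Each $l_{n+i}$ is graded symmetric of degree $1$, and $\alpha\in\frakg^0$, so the insertion of $\alpha^{\ot i}$ contributes no Koszul signs. Hence each $l^\alpha_n$ is graded symmetric in $x_1,\dots,x_n$ and has degree $1$.

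For the \emph{generalised Jacobi identity}, fix homogeneous $x_1,\dots,x_n$ and expand
$$J^\alpha_n:=\sum_{i=1}^n\sum_{\sigma\in\Sh(i,n-i)}\varepsilon(\sigma)\,l^\alpha_{n-i+1}\big(l^\alpha_i(x_{\sigma(1)},\dots,x_{\sigma(i)}),x_{\sigma(i+1)},\dots,x_{\sigma(n)}\big)$$
as a double series in $p$ and $q$. Here $p$ copies of $\alpha$ enter the inner operation and $q$ copies enter the outer one, with coefficient $\frac{1}{p!q!}$. I then group the terms by $N=p+q$. On the other side, I write out the original Jacobi identity $J_{N+n}(\alpha,\dots,\alpha,x_1,\dots,x_n)=0$. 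Its $(j,N+n-j)$-shuffles distribute the $N$ copies of $\alpha$ among inner and outer slots. Since $\alpha$ has degree $0$ and all copies are equal, a choice of $p$ inner copies occurs exactly $\binom{N}{p}$ times, with no extra sign. The factor $\frac{1}{N!}\binom{N}{p}=\frac{1}{p!q!}$ therefore matches the twisted coefficients. The terms of $J_{N+n}(\alpha^{\ot N},x)$ whose inner operation receives at least one $x_k$ reproduce exactly the terms of $J^\alpha_n$ of total $\alpha$-weight $N$.

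The only terms of $J_{N+n}(\alpha^{\ot N},x)$ not accounted for are those whose inner operation receives \emph{only} copies of $\alpha$, namely
$$\binom{N}{p}\,l_{N-p+n+1}\big(l_p(\alpha^{\ot p}),\alpha^{\ot(N-p)},x_1,\dots,x_n\big),\qquad p\geq 1.$$
Summing $\frac1{N!}J_{N+n}(\alpha^{\ot N},x)=0$ over $N$ therefore gives
$$J^\alpha_n=-\sum_{q\geq 0}\frac{1}{q!}\,l_{q+n+1}\Big(\sum_{p\geq1}\frac1{p!}l_p(\alpha^{\ot p}),\alpha^{\ot q},x_1,\dots,x_n\Big),$$
and this vanishes by the Maurer--Cartan equation for $\alpha$. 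Here I use that $l_p(\alpha^{\ot p})$ has degree $1$, so the sign from moving it to the front is the same as in $J_{N+n}$.

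The main obstacle is the bookkeeping in the middle step. One must check carefully that the Koszul signs $\varepsilon(\sigma)$ for shuffles of $\alpha^{\ot N}\ot x$ restrict to the signs of shuffles of $x$ alone. This holds because moving degree-$0$ elements costs nothing, which is also why the binomial counting is sign-free. One must also justify the rearrangement of the double series, which is where the existence and convergence hypothesis is used. As an alternative check, I would rephrase everything via the coderivation $D$ on $\rmS(\frakg)$ encoding $\{l_n\}$. The twisted coderivation is then $D^\alpha=e^{-\alpha}\circ D\circ e^{\alpha}$, where $e^{\alpha}$ denotes multiplication by $\sum_k\frac{\alpha^{\odot k}}{k!}$. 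From this, $(D^\alpha)^2=e^{-\alpha}D^2e^{\alpha}=0$ is automatic, and the Maurer--Cartan condition is exactly what kills the constant component of $D^\alpha$, so that no $l^\alpha_0$ term appears.
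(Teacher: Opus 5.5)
Your proof is correct, and it is the standard one. The paper does not prove this proposition: it recalls it as a known fact in the section on $L_\infty[1]$-algebras, with a pointer to Getzler's work, so there is no argument in the paper to compare yours against.

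All the essential points are present in your argument:
\begin{itemize}
\item the count $\frac{1}{N!}\binom{N}{p}=\frac{1}{p!\,q!}$, with no Koszul signs coming from the degree-$0$ copies of $\alpha$;
\item the identification of $\sum_N\frac{1}{N!}J_{N+n}(\alpha^{\ot N},x_1,\dots,x_n)$ with $J^\alpha_n$, plus the terms whose inner operation receives only copies of $\alpha$;
\item the observation that these leftover terms assemble into $l_{q+n+1}$ applied to the Maurer--Cartan expression, and so vanish.
\end{itemize}
Your coderivation remark is the conceptual form of the same computation. The conjugate $D^\alpha=e^{-\alpha}De^{\alpha}$ on the completed, counital symmetric coalgebra squares to zero automatically. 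Its would-be constant component $l^\alpha_0=\sum_{p\geq1}\frac{1}{p!}l_p(\alpha^{\ot p})$ is killed exactly by the Maurer--Cartan equation.

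You are also right to state the analytic hypothesis explicitly. The paper's phrase ``whenever these infinite sums exist'' does not by itself justify rearranging the double series, or moving the Maurer--Cartan sum inside $l_{q+n+1}$. One needs something more, for example:
\begin{itemize}
\item a complete filtration with continuous $l_n$; or
\item as in the paper's own uses of the proposition (twisting by $(s\pi,T)$), the fact that only finitely many terms are nonzero.
\end{itemize}
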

	
	\bigskip
	
	\section{The L$_{\infty}$-structure  for extended Rota-Baxter Lie algebras with weight}\label{sec 5}
	
	  From now on, we will always assume that the base field $\bfk$  contains a square root $\sqrt{\mu^2-4\lambda}$ unless otherwise specified.

	  Now we will present the controlling algebra of extended Rota-Baxter Lie algebras.
	\subsection{The $L_{\infty}$-structure for extended Rota-Baxter Lie algebras with weight}\ \label{subsec 5.1}
	
	Before constructing the $L_{\infty}$-algebra, we give some facts about the field $\bfk$.
	\begin{lem} \label{key lemma}
		For $\mu,\lambda \in \bfk$ and $m,n \in \mathbb{Z}$, set $t=\sqrt{\mu^2-4\lambda}$. We have the identity whenever the following  elements exist:
		\begin{equation*}
			\begin{aligned}
				&\frac{\lambda}{t}((\frac{\mu +t}{2})^{m}-(\frac{\mu -t}{2})^{m})\frac{\lambda}{t}((\frac{\mu +t}{2})^{n}-(\frac{\mu -t}{2})^{n})+\frac{\lambda}{t}((\frac{\mu +t}{2})^{n+m+1}-(\frac{\mu -t}{2})^{n+m+1})\\
				&=\frac{\lambda}{t}((\frac{\mu +t}{2})^{m+1}-(\frac{\mu -t}{2})^{m+1})\frac{1}{t}((\frac{\mu +t}{2})^{n+1}-(\frac{\mu -t}{2})^{n+1})	
			\end{aligned}
		\end{equation*}
	\end{lem}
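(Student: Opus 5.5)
The plan is to reduce the identity to a statement about the two roots of the quadratic $X^2-\mu X+\lambda$ and check it by expanding. Set
$$\alpha=\frac{\mu+t}{2},\qquad \beta=\frac{\mu-t}{2}.$$
These satisfy $\alpha+\beta=\mu$, $\alpha-\beta=t$ and $\alpha\beta=\frac{\mu^2-t^2}{4}=\lambda$. For $k\in\mathbb{Z}$ write $a_k=\frac{1}{t}(\alpha^k-\beta^k)$. The hypothesis ``whenever the elements exist'' means we assume $t\neq 0$, and $\alpha,\beta\neq 0$ when negative exponents occur. In this notation the claimed identity reads
$$\lambda^2 a_m a_n+\lambda a_{m+n+1}=\lambda a_{m+1}a_{n+1}.$$

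If $\lambda=0$, both sides vanish and there is nothing to prove. Otherwise we divide by $\lambda$ and multiply by $t^2$, so it suffices to show
$$t^2\lambda a_m a_n+t^2 a_{m+n+1}=t^2 a_{m+1}a_{n+1}.$$
In fact this cleared form holds identically, so the case split on $\lambda$ is only a convenience.

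We expand both products, using $\lambda=\alpha\beta$ on the left. This gives
$$t^2 a_{m+1}a_{n+1}=\alpha^{m+n+2}+\beta^{m+n+2}-\alpha^{m+1}\beta^{n+1}-\alpha^{n+1}\beta^{m+1},$$
$$t^2\lambda a_m a_n=\alpha^{m+n+1}\beta+\alpha\beta^{m+n+1}-\alpha^{m+1}\beta^{n+1}-\alpha^{n+1}\beta^{m+1}.$$
The mixed terms $-\alpha^{m+1}\beta^{n+1}-\alpha^{n+1}\beta^{m+1}$ are the same in both expressions. Their difference is therefore
$$\alpha^{m+n+2}+\beta^{m+n+2}-\alpha^{m+n+1}\beta-\alpha\beta^{m+n+1}=(\alpha-\beta)(\alpha^{m+n+1}-\beta^{m+n+1})=t^2a_{m+n+1},$$
which is exactly the required identity.

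There is no real obstacle. The only points needing care are:
\begin{itemize}
\item the bookkeeping of which quantities must be defined: $t\neq0$, and $\alpha,\beta$ invertible for negative $m,n$;
\item noting that all manipulations above are valid for arbitrary integer exponents, since they use only $\alpha\beta=\lambda$ and the distributive law in $\bfk$.
\end{itemize}
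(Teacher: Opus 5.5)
Your proof is correct. With $\alpha\beta=\lambda$ and $\alpha-\beta=t$, both of your expansions are right. The mixed terms cancel, and the remainder factors as $(\alpha-\beta)(\alpha^{m+n+1}-\beta^{m+n+1})=t^2a_{m+n+1}$.

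The paper states this lemma without proof, so your computation supplies exactly the omitted verification. One extension is worth noting: the same calculation also justifies the paper's subsequent remark that $A_mA_n+A_{m+n+1}=A_{m+1}B_{n+1}$ holds ``without any assumption'', i.e.\ even when $t=0$ or when $\sqrt{\mu^2-4\lambda}\notin\bfk$.

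\begin{itemize}
\item For nonnegative indices, $a_k=\sum_{j=0}^{k-1}\alpha^j\beta^{k-1-j}$ is a symmetric polynomial. Hence it is a polynomial in $\mu=\alpha+\beta$ and $\lambda=\alpha\beta$.
\item $(\alpha-\beta)^2$ is a non-zero-divisor in $\mathbb{Z}[\alpha,\beta]$, so your identity with $t^2$ cleared yields a polynomial identity in $\mathbb{Z}[\mu,\lambda]$.
\item That polynomial identity specializes to any field.
\end{itemize}
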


	\begin{rmk}
		We will denote $A_{n}:=\frac{\lambda}{t}((\frac{\mu +t}{2})^{n}-(\frac{\mu -t}{2})^{n}) $ and $B_{n}:=\frac{1}{t}((\frac{\mu +t}{2})^{n}-(\frac{\mu -t}{2})^{n})$.
		
		We only consider $A_n $ for $n\ge -1$ and $B_n $ for $n\ge 0$ in the following context. Note that we are Not concerned with the coefficients $A_n$($B_n$) when $t=0$ or the existence of $t$. We use it as a formal notation, what we actually use is  the explicit expression for each term just as given below. Then $A_mA_n+A_{m+n+1}=A_{m+1}B_{n+1}$( or $A_mB_n+B_{m+n+1}=B_{m+1}B_{n+1}$) without any assumption.
		
		For small $n$, we have\\
		$A_{0}=0$, $A_{1}=\lambda$,\\
		$A_{2}=\lambda \mu$, $A_{3}=\lambda (\mu^2-\lambda)$,\\
		$A_{4}=\lambda (\mu^3-2\mu\lambda)$. In particular, we assume that	$A_{-1}=-1$ even if $\lambda=0$.
	\end{rmk}

    	Let $\frakg$ be a vector space.  Let $$\frakm:=\mathrm{Hom}(\wedge\frakg,\frakg)\ \mathrm{and}\
    \fraka:=\Hom(\wedge\frakg,\frakg).$$
    For homogeneous elements	$f\in\Hom(\wedge^{n}\frakg,\frakg)$, $\sigma \in \rmS_n$ and $x_1,\dots,x_n\in \frakg$, we define $f\sigma^{-1}(x_1,\dots,x_n):=f(x_{\sigma(1)},\dots,x_{\sigma(n)})$.
    
	\begin{thm}\label{thm: Linfinity for extend Lie}
		Keeping the above notation and donote $t=\sqrt{\mu^2-4\lambda}$. There exists an $L_\infty[1]$-algebra structure on  $s \frakm\oplus\fraka$, where $l_i$ are given by 
		$$l_1(sf)=-A_n f$$
		$$
		l_2(sf,sg)      =    (-1)^{|f|} s[f,g]_{\NR}, $$
		and for $i\geq 2$,
		\begin{equation*}
			\begin{aligned}
				l_i(sf,\xi_1,\cdots,\xi_{i-1})  =&-\sum\limits_{\tau\in\Sh(m_1+1,\dots,m_{i-1}+1,n+2-i)}sgn(\tau)\\ &
				A_{n-i+1}\big(f(\xi_1\otimes\dots\otimes\xi_{i-1}\otimes\Id^{\otimes n+2-i})\big)\tau^{-1}\\
				&-\sum_{k=1}^{i-1}\sum\limits_{\tau\in\Sh(m_1+1,\dots,m_{i-1}+1,n+3-i,m_k)}sgn(\tau)(-1)^{(n+\sum\limits_{j=1 }^{k-1}m_j)m_k} \\ &
				B_{n-i+3} \xi_k  \big(f(\xi_1\otimes\dots\otimes\xi_{i-1}\otimes\Id^{\otimes n+3-i})\o \Id^{\otimes m_k}\big)\tau^{-1},	
			\end{aligned}
		\end{equation*}
		for homogeneous elements	$f\in\Hom(\wedge^{n+1}\frakg,\frakg)\subseteq \frakm$, $g\in\Hom(\wedge^{m+1}\frakg,\frakg)\subseteq \frakm$  and $\xi_j\in\Hom(\wedge^{m_j+1}\frakg,\frakg)\subseteq \fraka$, $1\leq j\leq i-1$, and all other components vanish.
	\end{thm}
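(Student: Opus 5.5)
We verify the generalised Jacobi identities of Definition~\ref{Def:L[1]-infty} by reducing them to a few identities among compositions in $\Hom(\wedge\frakg,\frakg)$ and recurrences among the coefficients $A_n,B_n$. The main tool is the recurrence $A_mA_n+A_{m+n+1}=A_{m+1}B_{n+1}$ (and $A_mB_n+B_{m+n+1}=B_{m+1}B_{n+1}$) from Lemma~\ref{key lemma}. We also use the elementary recurrences $B_{n+1}=\mu B_n-\lambda B_{n-1}$ and $A_n=\lambda B_n$, which follow since $\frac{\mu\pm t}{2}$ are the roots of $x^2-\mu x+\lambda$.

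The organizing idea is a Voronov-type derived bracket, modified to account for the weight. We regard $s\frakm$ as the graded Lie algebra $(s\Hom(\wedge\frakg,\frakg),[\cdot,\cdot])$ of Proposition~\ref{gla for Lie}, and $\fraka$ as an abelian subalgebra. The brackets $l_i(sf,\xi_1,\dots,\xi_{i-1})$ should come from iterated brackets $[\cdots[[sf,\xi_1],\xi_2],\dots,\xi_{i-1}]$ projected to $\fraka$. The weight coefficients are inserted according to how many arguments of $f$ remain uncontracted ($n+2-i$ or $n+3-i$).

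Because the structure maps are multilinear, graded symmetric, and vanish whenever two $s\frakm$-inputs meet an $\fraka$-input (or three $s\frakm$-inputs meet), only three families of inputs give nontrivial Jacobi identities.

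\textbf{Family 1: inputs $(sf,sg,sh)$.} Only $l_2$ acts on $s\frakm$, so the identity is exactly the graded Jacobi identity for the Nijenhuis--Richardson bracket. This is Proposition~\ref{gla for Lie}.

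\textbf{Family 2: inputs $(sf,sg,\xi_1,\dots,\xi_{p})$.} The terms are
\[
l_{p+1}(s[f,g]_{\NR}\text{-term},\xi\text{'s}),\qquad l_{\bullet}\bigl(sf,\,l_\bullet(sg,\xi_I),\,\xi_J\bigr),
\]
together with the symmetric terms with $f$ and $g$ exchanged, and possibly terms with $l_1(sf)\in\fraka$ feeding into $l_\bullet(sg,\dots)$. We expand each term into sums over shuffles of composites of $f$, $g$ and the $\xi$'s. We then match coefficients of each composite type:
\begin{itemize}
\item a type with $f$ and $g$ nested directly, all $\xi$'s inside;
\item a type with a $\xi_k$ on the outside of $f$ or $g$;
\item a type with two nested outer $\xi$'s.
\end{itemize}
Cancellation within each type reduces to the identity $A_aA_b+A_{a+b+1}=A_{a+1}B_{b+1}$ or its $B$-analogue, with indices determined by the arities. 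The term with $l_1(sf)=-A_nf$ plugged into $l_\bullet(sg,\dots)$ produces the product $A_aA_b$. The $l_2(sf,sg)$-term produces the single $A_{a+b+1}$. The outer-$\xi$ terms produce the products $A B$ and $BB$.

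\textbf{Family 3: inputs $(sf,\xi_1,\dots,\xi_p)$ with $\xi$'s only.} The terms are $l_\bullet(l_\bullet(sf,\xi_I),\xi_J)$. These vanish unless the inner output is again in $s\frakm$, which never happens. We must check this carefully, and in fact the inner output lands in $\fraka$, which is killed. What remains is $l_{p+1}(l_1(sf),\dots)$, also zero. So only the trivial cancellation coming from graded symmetry of the shuffle sums is needed.

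\textbf{Expected obstacle.} The main difficulty is the bookkeeping of signs and shuffles in Family 2. There one must show that composites arising in different orders (for example $\xi_k\circ f\circ g$ versus $f\circ(g\circ\xi)$) regroup into shuffles over a common set with consistent Koszul signs. Only after that regrouping do the coefficient identities of Lemma~\ref{key lemma} apply.

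To manage this, we would first treat the case $\mu=\lambda=0$, where $A_n=0$ except $A_{-1}=-1$ and $B_n$ is nonzero only for $n=1$. This reproduces the known derived-bracket $L_\infty$ structure for Rota--Baxter operators of weight $0$ \cite{LST}, and it fixes the signs. For general weight, the sign structure is unchanged and only the scalar coefficients change, so the argument reduces to the coefficient identities above.

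As a check, the Maurer--Cartan equation for $(s\pi,T)$ should recover the Lie bracket together with equation~\eqref{eq:diffwt}. This uses $A_1=\lambda$, $B_1=1$ and $B_2=\mu$, and it is consistent with Theorem~\ref{MC elements Lie}.
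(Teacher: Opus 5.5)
Your reduction to inputs $(sf,sg,\xi_1,\dots,\xi_p)$, followed by sorting composites by shape and cancelling coefficients via $A_aA_b+A_{a+b+1}=A_{a+1}B_{b+1}$ and $A_aB_b+B_{a+b+1}=B_{a+1}B_{b+1}$, is the paper's route in Appendix~1. The gap is in the step you use to settle the signs.

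At weight $(0,0)$ one has $A_j=0$ for $j\ge 0$ and $B_j=0$ for $j\ne 1$. So $l_1$ vanishes on $s\frakm$ outside degree $-1$, and $l_i(sf,\xi_1,\dots,\xi_{i-1})=0$ unless $|f|=i-2$. Hence the weight-zero Jacobi identity on $(sf,sg,\xi_1,\dots,\xi_p)$ is vacuous unless $|f|+|g|=p-1$. On that diagonal only $A_{-1}=-1$ and $B_1=1$ occur, so it is the same identity at every weight.

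Everything weight-dependent sits in the cases $|f|+|g|\ge p$. These involve composites where $f$ or $g$ keeps several $\Id$-slots, the terms $l_{p+2}(l_1(sf),sg,\xi_\bullet)$, and the parts of $l_{p+1}(s[f,g]_{\NR},\xi_\bullet)$ with coefficients $A_j,B_{j+2}$, $j\ge 0$. Already for $p=0$ you need the relative signs of the $f\bar\circ g$-components of $A_{|f|+|g|}[f,g]_{\NR}$, $A_{|f|}B_{|g|+1}\,f\bar\circ g$ and $A_{|f|-1}A_{|g|}\,f\bar\circ g$. None of these terms exists at weight zero, so their signs are not ``fixed'' there. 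These are identities with coefficients polynomial in $(\mu,\lambda)$, and checking them at the single point $(0,0)$, where all these coefficients vanish, certifies nothing. It is also not true that ``only the scalars change'': the set of shapes changes.

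So the sign and shuffle verification, which you correctly flag as the main difficulty, is left undone. The paper does it by splitting into subterms (1.1.1)--(4.2.2.2) and exhibiting seven cancellation groups. Three of these groups do not use the lemma: they cancel purely by sign, with equal coefficients, between the nestings $l(sg,l(sf,\dots),\dots)$ and $l(sf,l(sg,\dots),\dots)$. One example is the composites $f(\dots,\xi_a(g(\dots),\dots),\dots)$, a shape missing from your list. The derived-bracket picture does not rescue the argument either. No V-datum is given for $\lambda\ne 0$, and the non-uniform factors $A_{n-i+1}$, $B_{n-i+3}$ are not a rescaling covered by Voronov's theorem.

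Your ``only the scalars change'' idea does become a proof if you compare with the Rota--Baxter case of weight $t$ rather than weight $(0,0)$. The substitution $T\mapsto T+\frac{\mu-t}{2}\Id$ turns extended Rota--Baxter operators of weight $(\mu,\lambda)$ into Rota--Baxter operators of weight $t$. Now take the $L_\infty[1]$-algebra given by the same formulas at weight $(t,0)$; this is the Rota--Baxter case, for which Appendix~2 gives a derived-bracket argument. In it, $\alpha=\frac{\mu-t}{2}\Id\in\fraka^0$ is trivially Maurer--Cartan, because every bracket with all inputs in $\fraka$ vanishes.

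Twisting by $\alpha$ gives brackets of exactly the shape in the theorem, with finite binomial sums as coefficients. For instance, $l_1^{\alpha}(sf)=-A_nf$. Likewise $B_{n-i+3}$ arises as $\sum_{k=0}^{M-1}\binom{M}{k}(\frac{\mu-t}{2})^k t^{M-1-k}$ with $M=n+3-i$. The Jacobi identities are then inherited from the weight-$t$ case with no new sign bookkeeping. Without an argument of this kind, you have to carry out the full signed expansion as the paper does.
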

	
	\begin{proof}
		First, we need to check that this is well-defined. This is given by the symmetry of $f,g$ and $\xi_j$, $1\leq j\leq i-1$.

		The rest of the proof will be given in Appendix $1$. Although somewhat involved, we can give a brief summary here:  the generalised Jacobi identity holds  by Lemma \ref{key lemma} and the fact that some terms  appear twice with opposite signs. 
		
	\end{proof}

	\begin{thm}\label{Thm: MC elements in ex Linifnity}
		Let $\frakg$ be a vector space.  Let $\pi\in\mathrm{Hom}(\wedge^2\frakg,\frakg)$, $T\in\mathrm{Hom}(\frakg,\mathfrak{g})$, then
		$(s \pi,T)\in \mathcal{MC}(s \frakm\oplus\fraka)$ if and only if $(\frakg,\pi,T)$ is  an extended Rota-Baxter Lie algebra  of weight  $(\mu,\lambda)$.
	\end{thm}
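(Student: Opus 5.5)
We expand the Maurer--Cartan equation $\sum_{n\ge1}\frac{1}{n!}l_n((s\pi,T)^{\otimes n})=0$ using the explicit brackets of Theorem~\ref{thm: Linfinity for extend Lie}, and then separate it into its $s\frakm$-component and its $\fraka$-component.

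Write $\alpha=s\pi+T$. Here $\pi\in\Hom(\wedge^2\frakg,\frakg)$ has $n=1$, and $T\in\Hom(\frakg,\frakg)$ has $m=0$. Both $s\pi$ and $T$ have degree $0$, so $\alpha$ lies in degree $0$.

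Since all other components vanish, the only nonzero terms in $l_k(\alpha^{\otimes k})$ are:
\begin{itemize}
\item $l_1(s\pi)$;
\item $l_2(s\pi,s\pi)$;
\item the terms $l_k(s\pi,T,\dots,T)$ with $k\ge2$, each occurring $k$ times in the symmetric expansion (the position of $s\pi$).
\end{itemize}
In the formula for $l_i$, the first sum involves $\Id^{\otimes n+2-i}=\Id^{\otimes 3-i}$ and the second involves $\Id^{\otimes 4-i}$. Since $f=\pi$ has arity $2$, only $i\le 3$ can contribute. So the equation is finite and we need only $i=1,2,3$.

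\textbf{The $s\frakm$-component.} This is $\frac12 l_2(s\pi,s\pi)=-\frac12 s[\pi,\pi]_{\NR}$. Its vanishing is equivalent to the Jacobi identity for $\pi$, by Proposition~\ref{gla for Lie}.

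\textbf{The $\fraka$-component.} It is
\[
l_1(s\pi)+\tfrac{2}{2!}\,l_2(s\pi,T)+\tfrac{3}{3!}\,l_3(s\pi,T,T).
\]
We evaluate each term on $x_1,x_2$:
\begin{itemize}
\item $l_1(s\pi)=-A_1\pi=-\lambda\pi$.
\item $l_2(s\pi,T)$ with $i=2$, $n=1$, $m_1=0$. The first sum has coefficient $A_0=0$ and vanishes. The second sum has coefficient $B_2=\mu$ and gives $-\mu\,T\pi(x_1,x_2)$ up to sign. We also expect a contribution $\pm B_{\,}$-type term of the form $-(\pi(Tx_1,x_2)+\pi(x_1,Tx_2))$ composed with $T$. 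Hence a careful reading of the $\Id$-slots is needed: with $i=2$ we have $\xi_1=T$, and $\Id^{\otimes 2}$ paired with $\Id^{\otimes m_k}$, $m_k=0$.
\item $l_3(s\pi,T,T)$ with $i=3$. The first sum has coefficient $A_{-1}=-1$ and gives $+\pi(Tx_1,Tx_2)$ after the shuffle over $\Sh(1,1,0)$. The second sum has coefficient $B_1=1$ and gives $-T(\pi(Tx_1,x_2)+\pi(x_1,Tx_2))$, the two terms coming from $k=1,2$.
\end{itemize}

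Collecting these, and with the factors $\frac{1}{k!}\cdot k$ together with the shuffle multiplicities absorbed, the $\fraka$-component should reduce to
\[
[Tx_1,Tx_2]-T([Tx_1,x_2]+[x_1,Tx_2]+\mu[x_1,x_2])-\lambda[x_1,x_2],
\]
up to an overall nonzero factor. This is exactly the vanishing required by \eqref{eq:diffwt}. It also matches the curved equation $-\lambda\pi-\mu T\circ\pi+\frac12[T,T]=0$ used in Theorem~\ref{MC elements Lie}.

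The main obstacle is bookkeeping: correctly computing the Koszul signs $(-1)^{(n+\sum m_j)m_k}$, which are trivial here since all $m_j=0$, and counting shuffles so that the coefficients $\frac{1}{2!}\cdot 2$ and $\frac{1}{3!}\cdot 3$ combine with the shuffle counts to give matching multiplicities. This is what guarantees that the $\pi(Tx_1,Tx_2)$ term, the $T\pi(T\cdot,\cdot)$ terms, the $\mu T\pi$ term and the $\lambda\pi$ term appear with relative coefficients $1,-1,-1,-1$.

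I would first check this on $\frakg$ one-dimensional, or with $\lambda=\mu=0$ (classical Rota--Baxter weight $0$, known from \cite{LST}), to fix the conventions. Then I would run the general computation. The converse direction is immediate, since both conditions are equivalences.
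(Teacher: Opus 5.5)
Your route is the paper's: split the Maurer--Cartan equation into the $s\frakm$-part $-\frac12 s[\pi,\pi]_{\NR}$ and the $\fraka$-part $l_1(s\pi)+l_2(s\pi,T)+\frac12 l_3(s\pi,T,T)$, then evaluate on $x_1,x_2$. But that evaluation is the whole proof, and you do not carry it out. Worse, the pieces you do record do not produce the identity you assert.

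\textbf{Reading the second sum of $l_i$, and $l_2(s\pi,T)$.} In the second sum the map $\xi_k$ is the outer one and must be dropped from the inner tensor; otherwise $f$ would receive $n+2$ arguments. So the inner $f$ takes the other $i-2$ maps $\xi_j$ together with $n+3-i$ identities. Read this way, $l_2(s\pi,T)=-B_2\,T\circ\pi=-\mu\,T\circ\pi$ exactly: only the trivial shuffle occurs and all $m_j=0$, so no sign is left open. In particular $l_2$ contains no term $T\circ\pi\circ(T\otimes\Id)$ at all. The only term with a $T$ inside $\pi$, namely $\pi(T\otimes\Id)$ in the first sum, carries $A_0=0$. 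The extra contribution you ``expect'' from $l_2$ would therefore spoil the identity.

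\textbf{Truncation.} Arity alone does not justify stopping at $i=3$. The second sum of $l_4(s\pi,T,T,T)$ is the well-formed composite $T\circ\pi(T\otimes T)$, and it vanishes only because $B_0=0$. Arity rules out $i\ge 4$ only for the first sum, and $i\ge 5$ for the second.

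\textbf{The multiplicities in $l_3(s\pi,T,T)$.} This is the more serious gap: each of your values for $l_3(s\pi,T,T)$ is too small by a factor $2$.
\begin{itemize}
\item First sum: both shuffles in $\Sh(1,1,0)$ contribute $+\pi(Tx_1,Tx_2)$, by antisymmetry of $\pi$. So the first sum is $2\pi(Tx_1,Tx_2)$.
\item Second sum: for \emph{each} $k\in\{1,2\}$ it already gives the full $-T(\pi(Tx_1,x_2)+\pi(x_1,Tx_2))$. The two terms come from the $\Sh(1,1)$-shuffle of the inner $T$ with the identity slot, not from the two values of $k$. So the second sum totals $-2T(\pi(Tx_1,x_2)+\pi(x_1,Tx_2))$.
\end{itemize}
With your values, the prefactor $\frac12$ would give $\pi(T\cdot,T\cdot)$ and $T\pi(T\cdot,\cdot)+T\pi(\cdot,T\cdot)$ weight $\frac12$ against $-\lambda\pi-\mu T\circ\pi$. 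That is not a multiple of \eqref{eq:diffwt}. So ``up to an overall nonzero factor'' is not available: the relative multiplicities are precisely what must be checked.

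With the corrected values the $\fraka$-component is
\[
-\lambda\pi(x_1,x_2)-\mu T\pi(x_1,x_2)+\pi(Tx_1,Tx_2)-T\pi(Tx_1,x_2)-T\pi(x_1,Tx_2),
\]
and its vanishing is exactly \eqref{eq:diffwt}, as in the paper's proof.
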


	\begin{proof}
		
		$(s\pi,T) \in\mathcal{MC}(s \frakm\oplus\fraka)$ if and only if
		$[\pi,\pi]_{\NR}=0$ (that is, $\pi$ is a Lie bracket) and
		\begin{equation*}
			\begin{aligned}
				0 &= 	l_1(s\pi)+\sum_{k=2}^\infty\frac{1}{(k-1)!}l_k(s\pi,\underbrace{T,\dots,T}_{(k-1)\ \mathrm{times}})  \\
				&=  	l_1(s\pi)+\mu  l_2(s\pi,T)+\frac{1}{2}l_3(s\pi,T,T)\\
				&=  -\lambda\pi-\mu T\bar{\circ} \pi+\frac{\lambda}{2}l_3(s\pi,T,T),
			\end{aligned}
		\end{equation*}
		For arbitrary  $(x,y)\in\wedge^2\frakg$, we have
		\begin{equation*}
			\begin{aligned}
				0&=	\big(-\lambda\pi- \mu T\circ \pi+\frac{1}{2}l_3(s\pi,T,T)\big)(x,y)\\
				&=-\lambda\pi(x,y)-\mu T\circ \pi(x,y)+\pi(Tx,Ty)-T\pi(x,Ty)-T\pi(Tx,y)\\
				&=-\lambda[x,y]-\mu T([x,y])-[T(x),T(y)]-T[T(x),y]- T[x,T(y)]\\
				&=[T(x),T(y)]-T[T(x),y]- T[x,T(y)]- \mu T([x,y])-\lambda[x,y],\\
			\end{aligned}
		\end{equation*}
		Hence $T$ is an extended Rota-Baxter operator of weight  $(\mu,\lambda)$.
	\end{proof}
	
	By Theorem \ref{thm: Linfinity for extend Lie}, we have
	
	\begin{cor}\label{curved dg Lie of operator}
		With the above notation. Let $(\frakg,\pi)$ be a Lie algebra, substitute $s\pi$ in place of $s f$ in the maps $l_i(sf,\xi_1,\cdots,\xi_{i-1})$ for $i\geq 1$.  Then $\fraka:=\Hom(\wedge\frakg,\frakg)$ equipped with $d_0(1)=l_1(s\pi), d_1(-)=l_2(s\pi,-),d_2(-,-)=l_3(s\pi,-,-)$ is a curved dg Lie algebra.  
	\end{cor}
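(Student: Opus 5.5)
The plan is to deduce the corollary from Theorem \ref{thm: Linfinity for extend Lie}. We twist the $L_\infty[1]$-algebra $s\frakm\oplus\fraka$ by $s\pi$, or equivalently read off the generalised Jacobi identities with $s\pi$ inserted, and then restrict them to inputs from $\fraka$.

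\emph{Step 1: $s\pi$ is a Maurer--Cartan element.} Since $s\pi$ has degree $0$ and $\pi$ is a Lie bracket, $[\pi,\pi]_{\NR}=0$. Hence $l_2(s\pi,s\pi)=0$. Also $l_1(s\pi)=-A_1\pi=-\lambda\pi\in\fraka$, and every $l_k$ with two or more entries from $s\frakm$ other than $l_2$ vanishes. So in the $L_\infty[1]$-algebra $s\frakm\oplus\fraka$, $s\pi$ satisfies the MC equation up to the $\fraka$-component $-\lambda\pi$. That is, it is not itself MC, which is exactly why a curvature term $d_0(1)=l_1(s\pi)$ appears.

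\emph{Step 2: restrict the generalised Jacobi identity.} Consider the identity of Definition \ref{Def:L[1]-infty} for the inputs $s\pi,\dots,s\pi$ (with multiplicity) together with $\xi_1,\dots,\xi_r\in\fraka$. The only nonzero brackets are $l_1$ on $s\frakm$, $l_2$ on $s\frakm\otimes s\frakm$, and $l_i$ with exactly one entry in $s\frakm$; brackets with all entries in $\fraka$ vanish. Because of this, the useful identities are those with exactly one or exactly two copies of $s\pi$.

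With one copy of $s\pi$ and $r=1,2,3$ inputs from $\fraka$, the surviving terms are $l_{r+1}(s\pi,l_1(\xi))$, which vanishes since $l_1$ is zero on $\fraka$, and nested terms like $l_j(s\pi,\dots,l_k(s\pi,\dots),\dots)$, which need two copies. So the informative relations come from two copies of $s\pi$.

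With two copies of $s\pi$ the identity has three kinds of terms: $l_2(s\pi,s\pi)=0$, terms $l_{r+1}(l_1(s\pi),\dots)$, and terms $l_a(s\pi,\dots,l_b(s\pi,\dots),\dots)$. Normalising by the factor $1/2$ coming from the two identical entries, this yields:
\begin{itemize}
\item[] for $r=0$: $l_2(s\pi,l_1(s\pi))=0$, i.e. $d_1d_0=0$;
\item[] for $r=1$: $l_2(s\pi,l_2(s\pi,x))+l_3(s\pi,l_1(s\pi),x)=0$, i.e. condition (2);
\item[] for $r=2$: the derivation rule (3), where the term $l_4(s\pi,l_1 s\pi,x,y)$ must be accounted for;
\item[] for $r=3$: the Jacobi identity (4), where the term $l_5(s\pi,l_1 s\pi,x,y,z)$ must be accounted for.
\end{itemize}
Graded symmetry (0) follows from graded symmetry of $l_3$.

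\emph{Main obstacle.} The residual terms $l_{r+2}(s\pi,l_1(s\pi),\dots)$ for $r=2,3$ do not appear in the curved dg Lie axioms, so they must vanish. I would check this directly from the formula in Theorem \ref{thm: Linfinity for extend Lie} with $f=\pi$, $n=1$. Then $l_i(s\pi,\dots)$ carries the coefficients $A_{2-i}$ and $B_{4-i}$, and the tensor factor $\Id^{\otimes n+2-i}$ (resp. $\Id^{\otimes n+3-i}$) requires $i\le 3$ (resp. $i\le 4$). For $i=4$ only the $B_0=0$ term could survive, so $l_i(s\pi,\dots)=0$ for $i\ge 4$. This kills the residual terms and also confirms that no higher $d_k$ exist. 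Matching signs between the $L_\infty[1]$ conventions and the curved dg Lie conventions of Definition \ref{curved dg Lie} (no décalage is needed, as the degrees on $\fraka$ already match) is then routine bookkeeping.
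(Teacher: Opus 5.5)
Your proposal is correct and follows the paper's route: the paper deduces the corollary directly from the theorem constructing the $L_\infty[1]$-structure on $s\mathfrak{m}\oplus\mathfrak{a}$, i.e.\ by inserting $s\pi$ into the generalised Jacobi identities, and you have made that deduction explicit. Your observation that $l_i(s\pi,\xi_1,\dots,\xi_{i-1})=0$ for $i\geq 4$ (by the arity of $\pi$ together with $B_0=0$) is exactly what removes the extra terms, and the Koszul signs do match conditions (0)--(4) of the definition of a curved dg Lie algebra.
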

    
    \medskip
    
	\subsection{Applications for special cases: Rota-Baxter Lie algebras with weight and modified Rota-Baxter Lie algebras}\ \label{subsec 5.2}
	We now discuss two significant algebraic structures: Rota-Baxter Lie algebras with weight and modified Rota-Baxter Lie algebras.
	
	\begin{cor}\label{wRBLie}
		Keeping the above notation and let $\lambda=0$, then there exists an $L_\infty[1]$-algebra structure on  $s \frakm\oplus\fraka$. It is just the  $L_\infty[1]$-algebra of  Rota-Baxter Lie algebras of weight $\mu$, where $l_i$ are given by

		$$
		l_2(sf,sg)      =    (-1)^{|f|} s[f,g]_{\NR}, $$
		and for $i\geq 2,n=i-2$,
		\begin{equation*}
			\begin{aligned}
				l_i(sf,\xi_1,\cdots,\xi_{i-1})=&\sum\limits_{\tau\in\Sh(m_1+1,\dots,m_{i-1}+1}\\ &
				sgn(\tau)\big(f(\xi_1\otimes\dots\otimes\xi_{i-1})\big)\tau^{-1}\\
				&-\sum_{k=1}^{i-1}\sum\limits_{\tau\in\Sh(m_1+1,\dots,m_{i-1}+1,m_k)}(-1)^{(n+\sum\limits_{j=1 }^{k-1}m_j)m_k} \\ 
				& sgn(\tau)\xi_k  \big(f(\xi_1\otimes\dots\otimes\xi_{i-1})\o \Id^{\otimes m_k}\big)\tau^{-1}
			\end{aligned}
		\end{equation*}
		for $i\geq 2,n>i-2$,
		\begin{equation*}
			\begin{aligned}
				l_i(sf,\xi_1,\cdots,\xi_{i-1})=
				&\sum_{k=1}^{i-1}\sum\limits_{\tau\in\Sh(m_1+1,\dots,m_{i-1}+1,n+3-i,m_k)}(-1)^{(n+\sum\limits_{j=1 }^{k-1}m_j)m_k} \\ 
				&-sgn(\tau)\mu^{n-i+2} \xi_k  \big(f(\xi_1\otimes\dots\otimes\xi_{i-1}\otimes\Id^{\otimes n+3-i})\o \Id^{\otimes m_k}\big)\tau^{-1},
			\end{aligned}
		\end{equation*}
		for homogeneous elements	$f\in\Hom(\wedge^{n+1}\frakg,\frakg)\subseteq \frakm$, $g\in\Hom(\wedge^{m+1}\frakg,\frakg)\subseteq \frakm$ and $\xi_j\in\Hom(\wedge^{m_j+1}\frakg,\frakg)\subseteq \fraka$, $1\leq j\leq i-1$, and all other components vanish.
	\end{cor}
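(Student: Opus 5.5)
The plan is to obtain Corollary~\ref{wRBLie} as a direct specialization of Theorem~\ref{thm: Linfinity for extend Lie} at $\lambda=0$. The $L_\infty[1]$-relations (graded symmetry and the generalised Jacobi identity) are already established there for all $\mu,\lambda$. Moreover, the remark after Lemma~\ref{key lemma} notes that the relations $A_mA_n+A_{m+n+1}=A_{m+1}B_{n+1}$ and $A_mB_n+B_{m+n+1}=B_{m+1}B_{n+1}$ hold as polynomial identities in $\mu,\lambda$, without assuming that $t$ exists or is nonzero. So setting $\lambda=0$ preserves the structure, and the only real task is to compute the coefficients $A_k,B_k$ at $\lambda=0$ and simplify the formulas for $l_i$.

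\textbf{Step 1: the coefficients at $\lambda=0$.} With $\lambda=0$ we have $t=\mu$ up to sign, so $\frac{\mu+t}{2}=\mu$ and $\frac{\mu-t}{2}=0$. From the polynomial expressions this gives $A_k=0$ for all $k\ge 0$, while the convention $A_{-1}=-1$ is kept. For $B_k$ we get $B_k=\mu^{k-1}$ for $k\ge1$ and $B_0=0$. To avoid division by $t$ when $\mu=0$, I would check these via the recursions $B_0=0$, $B_1=1$, $B_{k+1}=\mu B_k-\lambda B_{k-1}$, which at $\lambda=0$ give $B_{k}=\mu^{k-1}$, with $\mu^0=1$ even when $\mu=0$.

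\textbf{Step 2: the operations $l_1$ and $l_2(sf,sg)$.} Since $f\in\Hom(\wedge^{n+1}\frakg,\frakg)$ has $n\ge -1$ and $A_n=0$ for $n\ge 0$, we get $l_1(sf)=0$ whenever $n\ge0$. This is why $l_1$ is absent from the statement. The case $n=-1$ (constants) is degenerate, and I would note that it does not affect the listed components. The bracket $l_2(sf,sg)$ is unchanged.

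\textbf{Step 3: the operations $l_i(sf,\xi_1,\dots,\xi_{i-1})$, split by the arity of $f$.} The first sum carries the coefficient $A_{n-i+1}$, which is nonzero only when $n-i+1=-1$, i.e.\ $n=i-2$, where it equals $-1$. The second sum carries $B_{n-i+3}$, which is nonzero only when $n-i+3\ge1$.
\begin{itemize}
\item For $n=i-2$, the first sum contributes with coefficient $-A_{-1}=+1$. Here $\Id^{\otimes n+2-i}$ is empty, so the shuffle set reduces to $\Sh(m_1+1,\dots,m_{i-1}+1)$. The second sum has $B_1=1$ and $\Id^{\otimes n+3-i}=\Id$, which matches the stated formula after absorbing that single identity slot.
\item For $n>i-2$, the first term vanishes and the second has $B_{n-i+3}=\mu^{n-i+2}$.
\item For $n<i-2$, both $A_{n-i+1}$ and $B_{n-i+3}$ are zero (or $f$ is fed too many arguments), so $l_i$ vanishes.
\end{itemize}

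\textbf{Step 4: identification with the known structure.} Finally, I would check against Theorem~\ref{Thm: MC elements in ex Linifnity} that Maurer--Cartan elements are exactly Rota--Baxter Lie algebras of weight $\mu$. I would also compare with \cite{CWZ} to justify the phrase ``it is just'' the $L_\infty[1]$-algebra of Rota--Baxter Lie algebras of weight $\mu$.

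\textbf{Main obstacle.} I expect the difficulty to lie in the boundary bookkeeping of Step 3: the case $n=i-2$, where the empty $\Id$-factors change the shuffle sets and signs, and the handling of $\mu=0$, where $t=0$ and the closed forms of $A_k,B_k$ are not literally defined. I would resolve both by working with the polynomial recursions for $A_k,B_k$ rather than the closed forms.
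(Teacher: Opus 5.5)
Your route is the paper's: the corollary is just the general $(\mu,\lambda)$ theorem read at $\lambda=0$. Your values $A_k=0$ for $k\ge 0$, $A_{-1}=-1$, $B_0=0$, $B_k=\mu^{k-1}$ for $k\ge 1$, together with the split into $n=i-2$, $n>i-2$ and $n<i-2$, reproduce the displayed formulas. The step that does not hold as written is your dismissal of $n=-1$ in Step 2.

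The statement lists no $l_1$ and says all other components vanish. But the specialization gives $l_1(sa)=-A_{-1}a=a$ for $a\in\Hom(\wedge^0\frakg,\frakg)\cong\frakg$, and this term cannot be thrown away. Take $\pi\in\Hom(\wedge^2\frakg,\frakg)$ and $\xi\in\Hom(\frakg,\frakg)$, and look at the generalised Jacobi identity on $(sa,s\pi,\xi)$ at $\lambda=0$. The terms $l_1(s\pi)=-\lambda\pi$, $l_2(sa,\xi)$ (since $n=-1<i-2=0$) and $l_3(sa,s\pi,\xi)$ all vanish. Only these two survive:
\[ l_3(l_1(sa),s\pi,\xi)=l_3(s\pi,a,\xi)\quad\text{and}\quad l_2(l_2(sa,s\pi),\xi)=-\,l_2\big(s\,\pi(a,-),\xi\big). \]
Each is, up to sign, $x\mapsto \pi(a,\xi(x))-\xi(\pi(a,x))$, and they cancel. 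So with $l_1=0$ in arity zero the identity fails for generic $a,\pi,\xi$.

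Your argument therefore proves the statement only after a small repair, which you should make explicit. One option is to record $l_1(sa)=a$ on $\Hom(\wedge^0\frakg,\frakg)$; this is the only nonzero part of $l_1$ at $\lambda=0$. The other is to replace $\frakm$ by $\frakm_{\ge 1}=\bigoplus_{n\ge 0}\Hom(\wedge^{n+1}\frakg,\frakg)$. This works because $s\frakm_{\ge1}\oplus\fraka$ is closed under all the $l_i$: $[f,g]_{\NR}$ of positive-arity maps has positive arity, and every other nonzero operation lands in $\fraka$. Hence it is a sub-$L_\infty[1]$-algebra, $l_1=0$ on it exactly as stated, and it still contains every Maurer--Cartan element $(s\pi,T)$, since degree-$0$ elements have binary $\frakm$-part.

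The paper's statement has the same blind spot. So does the hypothesis $P\circ\iota_\frakm=0$ in the derived-bracket construction of Appendix 2, which fails in arity zero. This is a correction to the statement, not to your method.
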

	\begin{rmk}
		This controlling algebra  coincides with the $L_\infty[1]$-algebra of Rota-Baxter Lie algebras with weight by using the derived bracket technique in Appendix $2$.
	\end{rmk}
	
	\begin{cor}\label{mRNLie}
		Keeping the above notation and let $\mu=0$, then there exists an $L_\infty[1]$-algebra structure on  $s \frakm\oplus\fraka$. It is just the  $L_\infty[1]$-algebra of modified Rota-Baxter Lie algebras, where $l_i$ are given by :
		
		\begin{eqnarray}
			l_1(sf)=\frac{\sqrt{-\lambda}}{2}(\sqrt{-\lambda}^{n}-(-\sqrt{-\lambda})^{n}) f=
			\begin{cases}
			(-\lambda)^{\frac{n+1}{2}}	, &  \text{$n$ is odd }  \\
				0, & \text{$n$ is even},
			\end{cases}
		\end{eqnarray}
		$$
		l_2(sf,sg)      =    (-1)^{|f|} s[f,g]_{\NR}, $$
		and for $i\geq 2$,

		 when $n-i$ is even 
		\begin{equation*}
			\begin{aligned}
				l_i(sf,\xi_1,\cdots,\xi_{i-1})=&\sum\limits_{\tau\in\Sh(m_1+1,\dots,m_{i-1}+1,n+2-i)}\\ &
				sgn(\tau)(-\lambda)^{\frac{n-i+2}{2}}\big(f(\xi_1\otimes\dots\otimes\xi_{i-1}\otimes\Id^{\otimes n+2-i})\big)\tau^{-1}\\
				&+\sum_{k=1}^{i-1}\sum\limits_{\tau\in\Sh(m_1+1,\dots,m_{i-1}+1,n+3-i,m_k)}(-1)^{(n+\sum\limits_{j=1 }^{k-1}m_j)m_k} \\ 
				&sgn(\tau)(-\lambda)^{\frac{n-i+2}{2}} \xi_k  \big(f(\xi_1\otimes\dots\otimes\xi_{i-1}\otimes\Id^{\otimes n+3-i})\o \Id^{\otimes m_k}\big)\tau^{-1},    \\ 
			\end{aligned}
		\end{equation*}\\
		when $n-i$ is odd, $l_i(sf,\xi_1,\cdots,\xi_{i-1})=0$,
		for homogeneous elements	$f\in\Hom(\wedge^{n+1}\frakg,\frakg)\subseteq \frakm$, $g\in\Hom(\wedge^{m+1}\frakg,\frakg)\subseteq \frakm$	and $\xi_j\in\Hom(\wedge^{m_j+1}\frakg,\frakg)\subseteq \fraka$, $1\leq j\leq i-1$, and all other components vanish.
	\end{cor}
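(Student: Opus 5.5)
This corollary is a specialization of Theorem~\ref{thm: Linfinity for extend Lie}, so the plan is to set $\mu=0$ there and compute the coefficients $A_k$ and $B_k$ explicitly. The $L_\infty[1]$-axioms (graded symmetry and the generalised Jacobi identity) are inherited verbatim from the theorem. No new verification is needed beyond identifying the structure maps. The Maurer--Cartan characterization in Theorem~\ref{Thm: MC elements in ex Linifnity} then shows that, with $\mu=0$, Maurer--Cartan elements $(s\pi,T)$ are exactly modified Rota-Baxter Lie algebras of weight $\lambda$. This justifies calling the structure the controlling algebra of modified Rota-Baxter Lie algebras.

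The key step is evaluating the coefficients. With $\mu=0$ we get $t=\sqrt{-4\lambda}=2\sqrt{-\lambda}$, so $\frac{\mu\pm t}{2}=\pm\sqrt{-\lambda}$. Substituting gives
\[
B_k=\frac{1}{2\sqrt{-\lambda}}\big(\sqrt{-\lambda}^{\,k}-(-\sqrt{-\lambda})^{k}\big),\qquad A_k=\lambda B_k .
\]
Hence $B_k=0$ for $k$ even, and $B_k=(-\lambda)^{(k-1)/2}$ for $k$ odd. Correspondingly, $A_k=0$ for $k$ even, and $A_k=\lambda(-\lambda)^{(k-1)/2}=-(-\lambda)^{(k+1)/2}$ for $k$ odd.

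To avoid any reliance on the existence of $\sqrt{-\lambda}$, I would verify these values directly from the recursions implied by the Remark after Lemma~\ref{key lemma}. These are $B_{k+1}=\mu B_k-\lambda B_{k-1}$, which becomes $B_{k+1}=-\lambda B_{k-1}$ at $\mu=0$, together with the initial values $B_0=0$, $B_1=1$, $A_0=0$, $A_1=\lambda$, $A_{-1}=-1$. A two-step induction then gives the formulas uniformly, including the convention $A_{-1}=-1$, which matches $-(-\lambda)^0$.

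Plugging in, $l_1(sf)=-A_nf$ vanishes for $n$ even and equals $(-\lambda)^{(n+1)/2}f$ for $n$ odd. For $l_i$, the two terms carry $-A_{n-i+1}$ and $-B_{n-i+3}$. Both indices have the parity of $n-i+1$, so both terms vanish when $n-i$ is odd. When $n-i$ is even, we get $-A_{n-i+1}=(-\lambda)^{(n-i+2)/2}$ and $-B_{n-i+3}=-(-\lambda)^{(n-i+2)/2}$.

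The main obstacle I anticipate is sign bookkeeping against the displayed statement. The second sum in the corollary carries the coefficient $+(-\lambda)^{(n-i+2)/2}$, while the theorem's $-B_{n-i+3}$ yields a minus sign. I would recheck the parity claims and the sign of $B$ carefully: $B_{n-i+3}$ with $n-i+3$ odd equals $(-\lambda)^{(n-i+2)/2}$. If the discrepancy persists, I would record that the correct coefficient is the one inherited from the theorem, namely $-(-\lambda)^{(n-i+2)/2}$ on the $\xi_k$-terms. The overall sign convention of the statement should be adjusted accordingly, since the $L_\infty[1]$-structure comes from Theorem~\ref{thm: Linfinity for extend Lie}. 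The remaining content, that $l_2(sf,sg)$ is unchanged and all other components vanish, is immediate.
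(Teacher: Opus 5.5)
Your proposal follows the paper's route exactly: the corollary is Theorem~\ref{thm: Linfinity for extend Lie} at $\mu=0$, and your values $A_{2j}=B_{2j}=0$, $A_{2j+1}=-(-\lambda)^{j+1}$, $B_{2j+1}=(-\lambda)^{j}$ (including $A_{-1}=-1$) are correct. The sign discrepancy you flagged is real and is a typo in the displayed statement, not an error in your computation: the $\xi_k$-terms must carry $-B_{n-i+3}=-(-\lambda)^{\frac{n-i+2}{2}}$, which matches the $-\mu^{n-i+2}$ in Corollary~\ref{wRBLie} and is forced by the Maurer--Cartan characterization, since for $n=1$, $i=3$ one needs $\tfrac12 l_3(s\pi,T,T)=\pi\circ(T\ot T)-T\circ\pi\circ(T\ot\Id)-T\circ\pi\circ(\Id\ot T)$ to recover $[T(x),T(y)]=T([T(x),y]+[x,T(y)])+\lambda[x,y]$.
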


    \bigskip
    
	\section{The cohomology of extended Rota-Baxter Lie algebras}\label{sec 6}

	Let $\frakg$ be a vector space.  Let $$\frakm:=\mathrm{Hom}(\wedge\frakg,\frakg)\ \mathrm{and}\
	\fraka:=\Hom(\wedge\frakg,\frakg).$$
	Recall that we have constructed an $L_\infty[1]$-structure on $s\frakm\oplus\fraka$ in Theorem~\ref{thm: Linfinity for extend Lie}.

	Let $(\frakg, \pi, T)$ be an extended Rota-Baxter Lie algebra.  By Theorem~\ref{Thm: MC elements in ex Linifnity}, $(s\pi, T)$ is a Maurer-Cartan element in the $L_\infty[1]$-algebra $s\frakm\oplus\fraka$. By  Proposition~\ref{Prop: twist-L-infty[1]}, twisting $s\frakm\oplus\fraka$ by  $(s\pi, T)$ gives a new $L_\infty[1]$-algebra, whose new differential is denoted by $l_{1}^{(s\pi,T)}$.

	\begin{prop}\label{cochaincomplexad}
		The  complex   $(s\frakm\oplus\fraka,l_{1}^{(s\pi,T)})$ is given by a mapping cone.
	\end{prop}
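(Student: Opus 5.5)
The plan is to compute $l_1^{(s\pi,T)}$ explicitly from the twisting formula of Proposition~\ref{Prop: twist-L-infty[1]} and read off a block upper-triangular form. By that proposition,
$$l_1^{(s\pi,T)}(x)=\sum_{i\ge 0}\frac{1}{i!}\, l_{i+1}\big((s\pi,T)^{\otimes i}\otimes x\big).$$
The only nonvanishing brackets in Theorem~\ref{thm: Linfinity for extend Lie} are $l_1$ on $s\frakm$, $l_2$ on $s\frakm\otimes s\frakm$, and the mixed brackets $l_i(sf,\xi_1,\dots,\xi_{i-1})$ with exactly one entry in $s\frakm$. So I would treat the two kinds of input separately.

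\textbf{Input $\xi\in\fraka$.} The argument $s\pi$ must occupy the unique $s\frakm$ slot, and every other slot is filled by $T$ or $\xi$. Only finitely many terms survive because $\pi$ has arity $2$ (so $n=1$), which forces $i\le 3$. The result is
$$l_1^{(s\pi,T)}(\xi)=l_2(s\pi,\xi)+l_3(s\pi,T,\xi),$$
which lands in $\fraka$. This is precisely $d_1^{T}(\xi)$ for the curved dg Lie algebra of Corollary~\ref{curved dg Lie of operator}, with $d_1=l_2(s\pi,-)$ and $d_2=l_3(s\pi,-,-)$. By Proposition-Definition~\ref{cohomology of operator} it is $\partial_{\RB}$, the Chevalley--Eilenberg differential of $\frakg_\star$ with coefficients in $\frakg_t$.

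\textbf{Input $sf\in s\frakm$.} Two pieces appear. The $s\frakm$-component is $l_2(s\pi,sf)=\pm s[\pi,f]_{\NR}$, which up to sign is $s\,\partial_{\Lie}f$ (Definition~\ref{cohomology of algebra}). The $\fraka$-component is
$$\Phi(f):=l_1(sf)+\sum_{k\ge1}\frac{1}{k!}\,l_{k+1}(sf,T,\dots,T).$$
So the differential has the matrix form
$$\begin{pmatrix}\partial_{\Lie}&0\\ \Phi&\partial_{\RB}\end{pmatrix}.$$
This exhibits $(s\frakm\oplus\fraka,l_1^{(s\pi,T)})$, up to the usual sign and shift conventions, as the mapping cone of $\Phi:\rmC^*_{\Lie}(\frakg)\to\rmC^*_{\RB}(\frakg)$.

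\textbf{Why $\Phi$ is a chain map.} The identity $(l_1^{(s\pi,T)})^2=0$ is guaranteed by Propositions~\ref{Prop: twist-L-infty[1]} and~\ref{Thm: MC elements in ex Linifnity}. Its off-diagonal component says exactly that $\Phi\circ\partial_{\Lie}=\pm\,\partial_{\RB}\circ\Phi$. So $\Phi$ is a chain map up to sign, with no further computation needed.

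\textbf{Main obstacle: describing $\Phi$ explicitly.} For $f\in\Hom(\wedge^{n+1}\frakg,\frakg)$, the sum defining $\Phi(f)$ terminates at $k=n+1$, since the shuffle sets require $n+2-i\ge 0$. I would substitute $\xi_j=T$ (so every $m_j=0$) into the formula for $l_i$. Each term then becomes $f$ with some of its arguments replaced by $T$, or $T$ applied after such an expression, weighted by $A_{n-k}$ or $B_{n-k+2}$. The $1/k!$ cancels against the repetitions of $T$ among the shuffles. The delicate part is organising these sums into a closed formula of the shape
$$\Phi(f)(x_1,\dots,x_{n+1})=\sum_{I}\Big(c_{|I|}\,f(T_Ix)-d_{|I|}\,T\,f(T_Ix)\Big),$$
where $T_Ix$ applies $T$ to the arguments indexed by $I$, and the coefficients $c,d$ are given by the $A$'s and $B$'s of Lemma~\ref{key lemma}. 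The sign bookkeeping $(-1)^{(n+\sum m_j)m_k}$ simplifies because all $m_j=0$.

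As sanity checks I would verify low degrees and compare with the known special cases $\lambda=0$ and $\mu=0$ (Corollaries~\ref{wRBLie} and~\ref{mRNLie}). In those cases $\Phi$ should reduce to the familiar map sending a Lie cochain to a cochain of the Rota--Baxter operator, which confirms the identification with the standard mapping-cone cohomology.
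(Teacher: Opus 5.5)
Your proposal is correct and follows the paper's own argument. You expand $l_1^{(s\pi,T)}$ via the twisting formula, then read off three blocks: $l_2(s\pi,-)$ on $s\frakm$ gives $\partial_{\Lie}$ up to sign, $l_2(s\pi,-)+l_3(s\pi,T,-)$ on $\fraka$ gives $\partial_{\RB}$, and $l_1(sf)+\sum_{k\ge1}\frac{1}{k!}l_{k+1}(sf,T,\dots,T)$ gives the connecting map $\delta$.

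Your derivation of the chain-map property from the off-diagonal component of $(l_1^{(s\pi,T)})^2=0$ supplies a justification the paper only asserts. One small imprecision: the truncations at $i\le 3$ and $k\le n+1$ do not follow from the arity count alone. The $B$-summand is arity-compatible for one more term, and that term vanishes because $B_0=0$.
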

	\begin{proof}

		It suffices to make explicit the differential $l_1^{(s\pi, T)}$.
		
		For  $n\geq 1$, $f\in \mathrm{Hom}( \wedge^n\frakg,\frakg), g\in  \mathrm{Hom}( \wedge^{n-1}\frakg,\frakg)$,
		\begin{eqnarray*}
			l_1^{(s\pi, T)}(sf, g)
			&=& \sum_{k=0}^{\infty}\frac{1}{k!}l_{k+1}(\underbrace{(s\pi,T),\cdots,(s\pi,T)}_{k\ \mathrm{times}}, (sf, g))\\
			&=&l_1(sf,g)+ l_2((s\pi, T),(sf,g))
			+ \sum_{k=2}^{\infty}\frac{1}{k!}l_{k+1}(\underbrace{(s\pi,T),\cdots,(s\pi,T)}_{k\ \mathrm{times}}, (sf, g))\\
			&=& \big(l_2(s\pi, sf), l_1(sf)+l_2(s\pi, g)+l_3(s\pi, T, g)+l_2(sf, T)
			+ \sum_{k=2}^{n}\frac{1}{k!}l_{k+1}(sf, \underbrace{T,\cdots,T}_{k\ \mathrm{times}}) \big).
		\end{eqnarray*}

		It is easy to see that
		$l_2(s\pi, sf)=- s[\pi, f]_{\NR}$ induces a  differential on $\frakm$. Denote  $\partial_{\Lie}:=[\pi,-]_{\NR}$. Then $(\frakm,\partial_\Lie)$ is just the cochain complex of the Lie algebra $\frakg$ in Definition \ref{cohomology of algebra}.

		Let us compute $l_1(sf)+l_2(s\pi, g)+l_3(s\pi, T, g)
		+l_2(sf, T)+ \sum\limits_{k=2}^{n}\frac{1}{k!}l_{k+1}(sf, \underbrace{T,\cdots,T}_{k\ \mathrm{times}})$.
		
		For   $x_1,\dots,x_{n}\in \frakg$, note that $(l_2(s\pi, g)+l_3(s\pi, T, g))(x_1, \dots, x_{n})$
		induces a  differential on $\fraka$ just as in Proposition-Definition \ref{cohomology of operator}. Denote this differential by $\partial_\RB$. So $(\fraka,\partial_\RB)$ is just the cochain complex of $T$.

		On the other hand,
		\begin{eqnarray*}
			&&l_1(sf)+l_2(sf, T)+\sum\limits_{k=2}^{n}\frac{1}{k!}l_{k+1}(sf, \underbrace{T,\cdots,T}_{k\ \mathrm{times}})(x_1, \dots, x_{n})\\
			&=&-A_{n-1}f-A_{n-2}f\bar{\circ} T-B_{n}T\bar{\circ} f\\
			&&-\sum_{k=2}^{n}\frac{1}{k!} \sum\limits_{\tau\in\Sh(1,\dots,1,n-k)}sgn(\tau)A_{n-k-1}\big(f(T\otimes\dots\otimes T\otimes\Id^{\otimes n-k})\big)\tau^{-1}(x_1, \dots, x_{n})\\
			&&-\sum_{k=2}^{n}\frac{1}{k!} sgn(\tau)B_{n-k+1} \sum_{i=1}^{k}\sum\limits_{\tau\in\Sh(1,\dots,1,n+1-k)}
			T  \big(f(T\otimes\dots\otimes T\otimes\Id^{\otimes n+1-k})\big)\tau^{-1} (x_1, \dots, x_{n})\\
			&=& -\sum_{\tau\in \Sh(k,n-k),0\le k\le n}sgn(\tau)A_{n-k-1}f(T(x_{i_1}),\cdots,T(x_{i_k}),x_1, \cdots, \hat{x_{i_1}},\cdots, \hat{x_{i_k}},\cdots,x_n ))\\
			&&-\sum_{\tau\in \Sh(k-1,n-k+1),1\le k\le n}sgn(\tau)B_{n-k+1}T\circ f(T(x_{i_1}),\cdots,T(x_{i_{k-1}}),x_1, \cdots, \hat{x_{i_1}},\cdots, \hat{x_{i_{k-1}}},\cdots,x_n ),\\
		\end{eqnarray*}
	  where $i_j,j\ge 1$ are decided by $\tau$.
	  
		It induces a linear map $ \frakm\to \fraka$, denote by $\delta$.
		That is, we have
		$$ l_1^{(s\pi, T)}(sf, g) =\big(-s\partial_\Lie^{n}(f), \partial_\RB^{n-1}(g)+\delta^n(f)\big)$$ and $\delta$ is a cochain map.
	\end{proof}
	
	From the above proof, we could see that:

	\begin{defn}\label{RBLie homology}
		With the above notation, \textbf{the cochain complex of the extended Rota-Baxter Lie algebra} $(\frakg,\pi,T)$ is defined to be $( \rmC^*_{\RBA}(\frakg):=s\frakm\oplus \fraka,\partial_\RBA)$ where $\partial_\RBA^{n}(sf,g)=\big(s\partial_\Lie^{n}(f), -\partial_\RB^{n-1}(g)-\delta^n(f)\big)$. It's induced by the mapping cone of the cochain map $\delta$ from the cochain complex of Lie algebra $\frakg$ to the cochain complex of operator $T$.  The corresponding cohomology group is called \textbf{the cohomology of the extended Rota-Baxter Lie algebra $\frakg$}.
	\end{defn}
	
	We generalize the above definition to the case of coefficients in an  arbitrary representation.
	
	Let $(V,T_V)$ be a representation of the extended Rota-Baxter Lie algebra $(\frakg,\pi,T)$ of weight $(\mu,\lambda)$.

	Consider the trivial extension  $ \frakg \ltimes V$ of the extended Rota-Baxter Lie algebra $(\frakg,T_{\frakg})$ by the representation $(V,T_V)$ in Proposition~\ref{Prop: trivial extension of Rota-Baxter rep}, then we get the complex $( \rmC^*_{\RBA}(\frakg \ltimes V),\partial_{\RBA}^*)$ by  Definition \ref{RBLie homology}.

	Given two vector spaces $V$ and $W$, by the isomorphism
	$$\begin{array}{rcl}
		\varphi:\wedge^n(V\oplus W)	& \longrightarrow     &\bigoplus_{k+l=n, k, l\geq 0}\wedge^kV\otimes \wedge^lW \\
		(v_1+w_1)\wedge\cdots\wedge(v_n+w_n)        &\mapsto      &\sum\limits_{\sigma\in \Sh(k,n-k)}sgn(\sigma)v_{\sigma(1)}\wedge\cdots\wedge v_{\sigma(k)}\otimes w_{\sigma(k+1)}\wedge\cdots\wedge w_{\sigma(n)},
	\end{array}$$
	we have an isomorphism\label{subspace}
	\begin{align}
		\mathrm{Hom}(\wedge^n(V\oplus W),V\oplus W)\cong\big(\oplus_{k+l=n}\mathrm{Hom}(\wedge^kV\otimes \wedge^lW,V)\oplus(\oplus_{k+l=n}\mathrm{Hom}(\wedge^kV\otimes \wedge^lW,  W))\big).\end{align}
	With this isomorphism in mind, we   have  the following result. 
	
	\begin{prop-def} \label{RBLie homology on V}
		\textbf{The cochain complex of the extended Rota-Baxter Lie algebra $(\frakg,\pi,T)$ with coefficients in the representation $(V,T_V)$}, denoted by $(\rmC_{\RBA}^*(\frakg,V),\partial_{\RBA}^*)$, is the subcomplex of
		$( \rmC^*_{\RBA}(\frakg \ltimes V, (\frakg \ltimes V)_\ad),\partial_{\RBA}^*)$ given by  the natural injection via the isomorphism \ref{subspace}
		from $$\rmC_{\RBA}^n(\frakg, V)=\mathrm{Hom}( \wedge^n\frakg,V)\oplus \mathrm{Hom}( \wedge^{n-1}\frakg,V)$$ to $$\rmC^n_{\RBA}(\frakg \ltimes V, (\frakg \ltimes V)_\ad)=\mathrm{Hom}( \wedge^n(\frakg \oplus  V),\frakg \oplus V)\oplus \mathrm{Hom}( \wedge^{n-1}(\frakg \oplus V),\frakg \oplus V).$$ The corresponding cohomology group is called \textbf{the cohomology of the extended Rota-Baxter Lie algebra $(\frakg,T)$ with coefficients in the representation $(V,T_V)$}. 
	\end{prop-def}
	
 \begin{rmk}\label{cocycle}
 	Now we provide a more detailed description of the cochain complex of  extended Rota-Baxter Lie algebras. Likewise, we can generalize the cohomology of Lie algebras and extended Rota-Baxter operators to the case of coefficients in an arbitrary representation.
 	
 	Let $(\frakg,\pi)$ be a Lie algebra and  $V$ be a  representation over $\frakg$. \textbf{The cochain complex of $\frakg$ with coefficients in $V$} is defined to be $(\rmC^*_{\Lie}(\frakg,V):=\mathrm{Hom}(\bigwedge (\frakg),V),\partial_{\Lie})$ where the coboundary operator $\partial_{\Lie}$ is given as follows. The corresponding cohomology is called \textbf{the cohomology of $\frakg$ with coefficients in $V$}. We could see that this is just the Chevalley-Eilenberg cochain complex and Chevalley-Eilenberg cohomology of $\frakg$ with coefficients in $V$. \\
 	
 	The coboundary operator $$\partial_{\Lie}^n: \mathrm{Hom}(\wedge^n \frakg,V)\longrightarrow  \mathrm{Hom}(\wedge^{n+1} \frakg,V), n\geq 0$$ is given by
 	\[\begin{split}
 		\partial_{\Lie}^n (f)(x_1,\dots,x_{n+1})&=\sum_{i=1}^{n+1}(-1)^{i+n}\rho(x_i) f(x_1,\dots,\hat{x}_i, \dots, x_{n+1})\\
 		&+\sum_{1\leq i<j\leq n+1} (-1)^{i+j+n+1}f([x_i, x_j], x_1,\dots,\hat{x}_i, \dots,\hat{x}_j, \dots,x_{n+1}),
 	\end{split}\]
 	for all $f\in \mathrm{Hom}(\wedge^n \frakg,V),~x_1,\dots, x_{n+1}\in \frakg$, where $\hat{x_i}$ means deleting this element.

 	Consider an extended Rota-Baxter Lie algebra $(\frakg,\pi,T)$ and $(V, T_V)$ is an extended Rota-Baxter representation over $(\frakg,\pi,T)$. \textbf{The cochain complex of the extended Rota-Baxter operator  $T$ with coefficients in $V$} denoted by $(\rmC^*_{\RB}(\frakg,V):=\mathrm{Hom}(\bigwedge (\frakg),V),\partial_{\RB})$ is defined to be the Chevalley-Eilenberg cochain complex   of $\frakg_\star$ with coefficient in $V_t$(See Proposition  \ref{Prop: new RB algebra} and \ref{descending property}). The corresponding  cohomology group is called \textbf{the cohomology of the extended Rota-Baxter operator  $T$ with coefficients in $V$}.
 	
 	There exists a cochain map $\delta: \rmC^*_{\Lie}(\frakg,V)\to \rmC^*_{\RB}(\frakg,V)$ defined by
 	\begin{align*}
 		&\delta(f)(x_1,\cdots,x_n)\\
 		=&-\sum_{\tau\in \Sh(k,n-k),k\ge 0}sgn(\tau)A_{n-k-1}f(T(x_{i_1}),\cdots,T(x_{i_k}),x_1, \cdots, \hat{x_{i_1}},\cdots, \hat{x_{i_k}},\cdots,x_n )\\
 		&-\sum_{\tau\in \Sh(k-1,n-k+1),k\ge 1}sgn(\tau)B_{n-k+1}T_V\circ f(T(x_{i_1}),\cdots,T(x_{i_{k-1}}),x_1, \cdots, \hat{x_{i_1}},\cdots, \hat{x_{i_{k-1}}},\cdots,x_n ),
 	\end{align*}
 	for $f\in \mathrm{Hom}(\wedge^n (\frakg),V)$ and $x_1,\dots,x_n \in \frakg$.
 	
 	Then the  coboundary operator of  cochain complex of the extended Rota-Baxter Lie algebra $(\frakg,\pi,T)$ with coefficients in the  $(V,T_V)$ is just $\partial_{\RBA}(sf,g)=\big(s\partial_\Lie^{n}(f), -\partial_\RB^{n-1}(g)-\delta^n(f)\big)$ for $f\in \mathrm{Hom}( \wedge^n\frakg,V), g\in  \mathrm{Hom}( \wedge^{n-1}\frakg,V)$.

 	 We compute $n$-cocycles of $\rmC_{\RBA}^*(\frakg,V)$ for small $n$.

 	 For all $(f,v)\in\Hom (\frakg,V)\oplus V$, $\partial^1_{\RBA} (f,v)=0$ if and only if $\partial^1_\Lie  f=0$ and
 	 \begin{align}\label{1-cocycle}
 	 [T(x), v]-T_V[x,v]=f(T(x))-T_V(f(x)),\quad \forall x\in \frakg.
 	\end{align}
 	 
 	 For all $(f,g)\in\Hom (\wedge^2\frakg,V)\oplus\Hom (\frakg,V) $,  $\partial^2_{\RBA}(f,g)=0$ if and only if $\partial^2_\Lie f=0,$ and
 	 \begin{align}\label{2-cocycle}
 	 	&-[T(x), g(y)]+T_V[x, g(y)]+ [T(y), g(x)]-T_V[y, g(x)]+g([x, y]_\star)\\
 	 	= & -\lambda f(x,y)+f(T(x),T(y))-\mu T_Vf(x,y)-T_V(f(T(x),y))-T_V(f(x,T(y))),\notag
 	 \end{align}
 	 for all $x,y\in \frakg.$

 \end{rmk}
	
	\bigskip
	
	\section{Formal deformations of extended Rota-Baxter  Lie algebras}\label{sec 7}
	
	In this section, we will study formal deformations of extended Rota-Baxter Lie algebras and interpret  them  via  the  low-degree   cohomology groups  of extended Rota-Baxter Lie algebras defined in Proposition-Definition \ref{RBLie homology on V}.

	Let $(\frakg, T)$ be an extended Rota-Baxter Lie algebra of weight $(\mu,\lambda)$.   Consider a 1-parameter family:
	\[\mu_t=\sum_{i=0}^\infty \mu_it^i, \ \mu_i\in \mathrm{Hom}( \wedge^2\frakg,\frakg),\quad  T_t=\sum_{i=0}^\infty T_it^i,  \ T_i\in \mathrm{Hom}(\frakg,\frakg).\]
	
	\begin{defn}
		A  1-parameter formal deformation of an extended Rota-Baxter Lie algebra $(\frakg, \pi,T)$ is a pair $(\mu_t,T_t)$ which endows the free $\bfk[[t]]$-module $\frakg[[t]]$ with an extended Rota-Baxter Lie algebra structure over $\bfk[[t]]$ such that $T_0=T$ and $\mu_0=\pi$.
	\end{defn}
	
	The power series $\mu_t$ and $ T_t$ determine a  1-parameter formal deformation of the extended Rota-Baxter Lie algebra $(\frakg,\pi,T)$ if and only if for any $x,y,z\in \frakg$, the following equations hold :
	\begin{eqnarray*}
			0 &=&  \mu_t(x, \mu_t(y, z))+\mu_t(y, \mu_t(z, x))+\mu_t(z, \mu_t(x, y)),\\\\
		\mu_t(T_t(x), T_t(y))&=& T_t\Big(\mu_t(x, T_t(y))+\mu_t(T_t(x), y)+\mu \mu_t(x,y)\Big)+\lambda \mu_t(x,y).
	\end{eqnarray*}
	By expanding these equations and comparing the coefficient of $t^n$, we obtain  that $\{\mu_i\}_{i\geqslant0}$ and $\{T_i\}_{i\geqslant0}$ have to  satisfy: for any $n\geqslant 0$,
	\begin{equation}\label{Eq: deform eq for  products in RBA}
		0=\sum_{i=0}^n \mu_i(x, \mu_{n-i}(y, z))+\mu_i(y, \mu_{n-i}(z, x))+\mu_i(z, \mu_{n-i}(x, y)),\end{equation}
	\begin{equation}\label{Eq: Deform RB operator in RBA} \begin{array}{rcl}
			\sum_{i+j+k=n\atop i, j, k\geqslant 0}	\mu_{i}\circ(T_j\ot T_{k})&=&\sum_{i+j+k=n\atop i, j, k\geqslant 0} T_{i}\circ \mu_j\circ (\id\ot T_{k})\\
			&  &+\sum_{i+j+k=n\atop i, j, k\geqslant 0} T_{i}\circ\mu_j\circ (T_{k}\ot \id)+\mu\sum_{i+j=n\atop i, j \geqslant 0}T_i\circ\mu_{j}+\lambda \mu_n.
	\end{array}\end{equation}
	Obviously, when $n=0$, it is just the extended Rota-Baxter Lie algebra $(\frakg,\pi,T)$.
	
	\smallskip
	
	\begin{prop}\label{Prop: Infinitesimal is 2-cocycle}
		Let $(\frakg[[t]],\mu_t,T_t)$ be a  1-parameter formal deformation of the
		extended Rota-Baxter Lie algebra $(\frakg,\pi,T)$ of weight $(\mu,\lambda)$. Then
		$(\mu_1,T_1)$ is a 2-cocycle in the cochain complex
		$\rmC_{\RBA}^\bullet(\frakg)$.
	\end{prop}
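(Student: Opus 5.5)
We compare the $t^1$ coefficients of the deformation equations \eqref{Eq: deform eq for  products in RBA} and \eqref{Eq: Deform RB operator in RBA} with the explicit description of $2$-cocycles in Remark~\ref{cocycle}, specialised to the regular representation $V=\frakg$, $T_V=T$. There, $(f,g)\in\Hom(\wedge^2\frakg,\frakg)\oplus\Hom(\frakg,\frakg)$ is a $2$-cocycle if and only if $\partial^2_\Lie f=0$ and \eqref{2-cocycle} holds. So it suffices to show that $(f,g)=(\mu_1,T_1)$ satisfies these two conditions.

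\textbf{Step 1 (Lie part).} Take $n=1$ in \eqref{Eq: deform eq for  products in RBA}:
\[\sum_{\mathrm{cyc}}\big(\pi(x,\mu_1(y,z))+\mu_1(x,\pi(y,z))\big)=0.\]
This is exactly the statement $[\pi,\mu_1]_{\NR}=0$, i.e. $\partial^2_\Lie\mu_1=0$ in the Chevalley--Eilenberg complex of Definition~\ref{cohomology of algebra}. The only thing to check is that the sign conventions of $\partial_\Lie$ agree with the cyclic sum up to an overall sign.

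\textbf{Step 2 (operator part).} Take $n=1$ in \eqref{Eq: Deform RB operator in RBA} and evaluate at $(x,y)$. On each side, keep the terms with exactly one index equal to $1$:
\begin{align*}
&\mu_1(Tx,Ty)+[T_1x,Ty]+[Tx,T_1y]\\
&=T_1\big([x,Ty]+[Tx,y]+\mu[x,y]\big)+T\big(\mu_1(x,Ty)+\mu_1(Tx,y)+\mu\mu_1(x,y)\big)\\
&\quad+T\big([x,T_1y]+[T_1x,y]\big)+\lambda\mu_1(x,y).
\end{align*}
Note that $T_1([x,Ty]+[Tx,y]+\mu[x,y])=T_1([x,y]_\star)$. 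Moving the $T_1$-terms to one side and the $\mu_1$-terms to the other gives
\[-[Tx,T_1y]+T[x,T_1y]+[Ty,T_1x]-T[y,T_1x]+T_1([x,y]_\star)=-\lambda\mu_1(x,y)+\mu_1(Tx,Ty)-\mu T\mu_1(x,y)-T\mu_1(Tx,y)-T\mu_1(x,Ty).\]
Here we used antisymmetry, e.g. $[T_1x,Ty]=-[Ty,T_1x]$ and $[T_1x,y]=-[y,T_1x]$. This is precisely \eqref{2-cocycle} with $f=\mu_1$, $g=T_1$, $V=\frakg$.

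\textbf{Main obstacle.} The only delicate point is sign bookkeeping. One must make sure the explicit formula \eqref{2-cocycle} in Remark~\ref{cocycle} really is the $n=2$ case of $\partial_\RBA(sf,g)=\big(s\partial_\Lie f,-\partial_\RB g-\delta f\big)$. Concretely, this means using $A_{-1}=-1$, $A_0=0$, $A_1=\lambda$, $B_1=1$, $B_2=\mu$ in the formula for $\delta$, together with the Chevalley--Eilenberg formula for $\partial_\RB$ on $\frakg_\star$ with coefficients in $\frakg_t$. The terms then match as follows:
\begin{itemize}
\item $\delta(\mu_1)(x,y)=\mu_1(Tx,Ty)-\lambda\mu_1(x,y)-T\mu_1(Tx,y)-T\mu_1(x,Ty)-\mu T\mu_1(x,y)$;
\item $\partial_\RB T_1$ produces the terms $[Tx,T_1y]-T[x,T_1y]$ and their counterparts with $x,y$ swapped, together with $T_1([x,y]_\star)$.
\end{itemize}
Once these identifications are verified, $\partial^2_\RBA(\mu_1,T_1)=0$ follows.
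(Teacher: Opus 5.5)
Your proposal is correct and takes the same route as the paper. Both extract the $t^1$ coefficients of the two deformation equations and recognise them as $\partial^2_{\Lie}(\mu_1)=0$ and $\delta^2(\mu_1)=-\partial^1_{\RB}(T_1)$, which is the explicit $2$-cocycle condition for the regular representation $(\frakg,T)$. Your check of the coefficients $A_{-1}=-1$, $A_0=0$, $A_1=\lambda$, $B_1=1$, $B_2=\mu$ in $\delta^2$ and of the signs in $\partial^1_{\RB}$ spells out what the paper asserts without computation.
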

	\begin{proof} When $n=1$,   Equations~(\ref{Eq: deform eq for  products in RBA}) and (\ref{Eq: Deform RB operator in RBA})  become
		$$0=\pi(x,\mu_1(y,z))+\pi(y,\mu_1(z,x))+\pi(z,\mu_1(x,y))+\mu_1(x,\pi(y,z))+\mu_1(y,\pi(z,x))+\mu_1(z,\pi(x,y))$$
		and
		$$\begin{array}{cl}
			&\mu_1 (T\ot T)-\{T\circ\mu_1\circ(\id\ot T)+T\circ\mu_1\circ(T\ot \id)+\mu T\circ \mu_1+\lambda \mu_1\}\\
			=&T\circ \pi\circ(\id\ot T_1)+T\circ \pi\circ(T_1\ot\id)
			+T_1\circ \pi\circ(\id\ot T)+T_1\circ \pi\circ(T\ot \id)+\mu T_1\circ \pi\\
			&-\pi\circ(T_1\ot T)-\pi\circ(T\ot T_1),
		\end{array}$$
		Note that  the first equation is exactly $\partial_{\Lie}^2(\mu_1)=0\in \C^\bullet_{\Lie}(\frakg)$ and that  second equation is exactly  \[\delta^2(\mu_1)=-\partial_{\RB}^1(T_1) \in \C^\bullet_{\RB}(\frakg).\]
		So $(\mu_1,T_1)$ is a 2-cocycle in $\C^\bullet_{\RBA}(\frakg)$, see Eq. (\ref{2-cocycle}).
	\end{proof}
	
	\smallskip
	
	\begin{defn} The 2-cocycle $(\mu_1,T_1)$ is called the infinitesimal of the 1-parameter formal deformation $(\frakg[[t]],\mu_t,T_t)$ of the extended Rota-Baxter Lie algebra $(\frakg,\pi,T)$.
	\end{defn}

	In general, we can rewrite Equations~(\ref{Eq: deform eq for  products in RBA}) and (\ref{Eq: Deform RB operator in RBA}) as
	\begin{eqnarray} \label{Eq: general formal of deform product in RBA} \partial_{\Lie}^2(\mu_n) = \frac{1}{2}\sum_{i=1}^{n-1} [\mu_i, \mu_{n-i}]_{\NR}\end{eqnarray}
	\begin{equation} \label{Eq: general formal of deform RBO in RBA}
		\begin{array}{rcl}\partial_{\RB}^{1}(T_n)   +\delta^2(\mu_n)&=& \sum_{i+j+k=n\atop 0 \leqslant i, j, k\leqslant n-1}	\mu_{i}\circ(T_j\ot T_{k})-\sum_{i+j+k=n\atop 0 \leqslant i, j, k\leqslant n-1} T_{i}\circ \mu_j\bar{\circ} (\id\ot T_{k})\\ &&-\sum_{i+j+k=n\atop 0 \leqslant i, j, k\leqslant n-1} T_{i}\circ\mu_j\bar{\circ} (T_{k}\ot \id)-\sum_{i+j=n\atop 0 \leqslant i, j\leqslant n-1}T_i\circ\mu_{j}.
	\end{array}\end{equation}
	
	\smallskip
	\begin{defn}
		Let $(\frakg[[t]],\mu_t,T_t)$ and $(\frakg[[t]],\mu_t',T_t')$ be two 1-parameter formal deformations of the extended Rota-Baxter Lie algebra $(\frakg,\pi,T)$. A formal isomorphism from $(\frakg[[t]],\mu_t',T_t')$ to $(\frakg[[t]], \mu_t, T_t)$ is a power series $\psi_t=\sum_{i=0}\psi_it^i: \frakg[[t]]\rightarrow \frakg[[t]]$, where $\psi_i: \frakg\rightarrow \frakg$ are linear maps with $\psi_0=\id_\frakg$, such that:
		\begin{eqnarray}\label{Eq: equivalent deformations}\psi_t\circ \mu_t' &=& \mu_t\circ (\psi_t\ot \psi_t),\\
			\psi_t\circ T_t'&=&T_t\circ\psi_t. \label{Eq: equivalent deformations2}
		\end{eqnarray}
		In this case, we say that 1-parameter formal deformations $(\frakg[[t]], \mu_t,T_t)$ and
		$(\frakg[[t]],\mu_t',T_t')$ are  equivalent.
	\end{defn}

	\smallskip

	Given an extended Rota-Baxter Lie algebra $(\frakg,\pi,T)$, the power series $\mu_t,T_t$
	with $\mu_i=0, T_i=0$ for $i\ge 1$ make
	$(\frakg[[t]],\mu_t,T_t)$ into a $1$-parameter formal deformation of
	$(\frakg,\pi,T)$. The formal deformations equivalent to this one are called \textbf{trivial}.
	\smallskip
	
	\begin{thm}
		The infinitesimals of two equivalent 1-parameter formal deformations of $(\frakg,\pi,T)$ are in the same cohomology class in $\rmH^2_{\RBA}(\frakg)$.
	\end{thm}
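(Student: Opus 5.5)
We compare the $t^1$-coefficients of the two equivalence equations \eqref{Eq: equivalent deformations} and \eqref{Eq: equivalent deformations2}, and show that the difference of the two infinitesimals is the coboundary of the $1$-cochain built from $\psi_1$. Let $\psi_t=\id_\frakg+\psi_1t+\cdots$ be a formal isomorphism from $(\frakg[[t]],\mu_t',T_t')$ to $(\frakg[[t]],\mu_t,T_t)$. Since $\psi_0=\id$, $\mu_0=\mu_0'=\pi$ and $T_0=T_0'=T$, the $t^0$-coefficients agree automatically.

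The $t^1$-coefficient of \eqref{Eq: equivalent deformations} reads
$$\mu_1'(x,y)+\psi_1([x,y])=\mu_1(x,y)+[\psi_1(x),y]+[x,\psi_1(y)].$$
Hence
$$\mu_1'-\mu_1=[\psi_1(x),y]+[x,\psi_1(y)]-\psi_1([x,y]),$$
which up to the sign convention of Definition~\ref{cohomology of algebra} is $\pm\partial^1_{\Lie}(\psi_1)$. The $t^1$-coefficient of \eqref{Eq: equivalent deformations2} gives
$$T_1'+\psi_1\circ T=T_1+T\circ\psi_1,$$
so $T_1'-T_1=T\circ\psi_1-\psi_1\circ T$. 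This is exactly (minus) the expression $f(T(x))-T_V(f(x))$ from \eqref{1-cocycle} with $f=\psi_1$ and $V=\frakg$, i.e.\ it equals $\pm\delta^1(\psi_1)$. Indeed, with $n=1$ the formula for $\delta$ in Remark~\ref{cocycle} reduces to $-A_0\psi_1-A_{-1}\psi_1\circ T-B_1T\circ\psi_1$, and $A_0=0$, $A_{-1}=-1$, $B_1=1$ give $\delta^1(\psi_1)=\psi_1\circ T-T\circ\psi_1$.

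Now take the $1$-cochain $(\psi_1,0)\in\rmC^1_{\RBA}(\frakg)=\Hom(\frakg,\frakg)\oplus\Hom(\bfk,\frakg)$. By Definition~\ref{RBLie homology},
$$\partial^1_{\RBA}(\psi_1,0)=\bigl(\partial^1_{\Lie}\psi_1,\,-\delta^1\psi_1\bigr).$$
By the two computations above, this equals $\pm(\mu_1'-\mu_1,\;T_1'-T_1)$. Both infinitesimals are $2$-cocycles by Proposition~\ref{Prop: Infinitesimal is 2-cocycle}, so they define the same class in $\rmH^2_{\RBA}(\frakg)$.

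The only delicate point is the signs. One must check that the sign conventions in $\partial_{\Lie}$ (the factor $(-1)^{i+n}$) and in $\partial_{\RBA}$ (the suspension $s$ and the $-\delta$) produce a single overall sign for both components simultaneously, rather than opposite signs. If the signs turn out mismatched, I would instead use the cochain $(-\psi_1,0)$, or absorb the discrepancy by checking directly at $n=1$ that $\partial^1_{\Lie}(\psi_1)=\psi_1\circ\pi-[\psi_1,\cdot]$ terms; both components are linear in $\psi_1$, so a consistent choice exists once the $n=1$ formulas are written out explicitly.
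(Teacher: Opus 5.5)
Your argument is the same as the paper's: compare the $t^1$-coefficients of the two equivalence identities, and recognize the difference of infinitesimals as $\partial^1_{\RBA}(\psi_1,0)=(\partial^1_{\Lie}\psi_1,-\delta^1\psi_1)$, using $A_0=0$, $A_{-1}=-1$, $B_1=1$ to get $\delta^1(\psi_1)=\psi_1\circ T-T\circ\psi_1$.

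The sign you left open does work out. The Chevalley--Eilenberg formula at $n=1$ gives $\partial^1_{\Lie}(\psi_1)(x,y)=[x,\psi_1(y)]+[\psi_1(x),y]-\psi_1([x,y])$, which is exactly $\mu_1'-\mu_1$. So $(\mu_1'-\mu_1,\,T_1'-T_1)=\partial^1_{\RBA}(\psi_1,0)$ with a plus sign and no adjustment.

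Your fallback of passing to $(-\psi_1,0)$ could not have repaired a relative sign mismatch, because it flips both components at once. The explicit $n=1$ check is therefore a necessary step of the proof, not an optional one.
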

	
	\begin{proof} Let $\psi_t:(\frakg[[t]],\mu_t',T_t')\rightarrow (\frakg[[t]],\mu_t,T_t)$ be a formal isomorphism.
		Expanding the identities and collecting coefficients of $t$, we get from Equations~(\ref{Eq: equivalent deformations}) and (\ref{Eq: equivalent deformations2}):
		\begin{eqnarray*}
			\mu_1'&=&\mu_1+\pi\circ(\id\ot \psi_1)-\psi_1\circ\pi+\pi\circ(\psi_1\ot \id),\\
			T_1'&=&T_1+T\circ\psi_1-\psi_1\circ T,
		\end{eqnarray*}
		that is, we have\[(\mu_1',T_1')-(\mu_1,T_1)=(\partial_{\Lie}^1(\psi_1), -\delta^1(\psi_1))=\partial_{\RBA}^1(\psi_1,0)\in  \C^2_{\RBA}(\frakg),\] see Eq. (\ref{1-cocycle}).
	\end{proof}
	
	\smallskip
	
	\begin{defn}
		An extended Rota-Baxter Lie algebra $(\frakg,\pi,T)$ is said to be rigid if every 1-parameter formal deformation is trivial.
	\end{defn}
	
	\begin{thm}
		Let $(\frakg,\pi,T)$ be an extended Rota-Baxter Lie algebra of weight $(\mu,\lambda)$. If $\rmH^2_{\RBA}(\frakg)=0$, then $(\frakg,\pi,T)$ is rigid.
	\end{thm}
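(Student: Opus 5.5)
The plan is the standard Gerstenhaber argument: an induction on the order of the first nonzero term of a deformation. Let $(\frakg[[t]],\mu_t,T_t)$ be a 1-parameter formal deformation of $(\frakg,\pi,T)$. We show that it is equivalent to one whose first nonzero term $(\mu_n,T_n)$, $n\ge 1$, has arbitrarily large index $n$. Composing the resulting gauge transformations then gives a formal isomorphism to the trivial deformation.

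\textbf{Key step: killing the leading term.} Suppose $\mu_i=0$ and $T_i=0$ for $1\le i\le n-1$. Equations~(\ref{Eq: general formal of deform product in RBA}) and (\ref{Eq: general formal of deform RBO in RBA}) at order $n$ then have vanishing right-hand sides, because every term there involves some index $i$ with $1\le i\le n-1$. The only exception is the terms built from $\mu_0=\pi$ and $T_0=T$ alone, and these have total index $0\neq n$, so they do not occur. Hence
\[
\partial_{\Lie}^2(\mu_n)=0 \quad\text{and}\quad \partial^1_{\RB}(T_n)+\delta^2(\mu_n)=0 .
\]
Exactly as in Proposition~\ref{Prop: Infinitesimal is 2-cocycle}, this says that $(\mu_n,T_n)$ is a 2-cocycle in $\rmC^\bullet_{\RBA}(\frakg)$. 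Since $\rmH^2_{\RBA}(\frakg)=0$, there is $(\psi,v)\in\rmC^1_{\RBA}(\frakg)=\Hom(\frakg,\frakg)\oplus\frakg$ with $(\mu_n,T_n)=\partial^1_{\RBA}(\psi,v)$.

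\textbf{Main obstacle: the component $v\in\Hom(\wedge^0\frakg,\frakg)$.} A gauge transformation only produces coboundaries of the form $\partial^1_{\RBA}(\psi,0)$, as the computation in the preceding theorem shows. So we must absorb $v$. By the definition of $\partial_{\RBA}$, the $\frakm$-component of $\partial^1_{\RBA}(\psi,v)$ depends only on $\psi$, while $v$ contributes only $-\partial^0_{\RB}(v)$ to the $\fraka$-component. By Proposition-Definition~\ref{cohomology of operator}, $\partial^0_{\RB}(v)$ is the map $x\mapsto \pm\big([T(x),v]-T[x,v]\big)$. This equals $\pm\big(T\circ\ad_v-\ad_v\circ T\big)(x)$, which is $\pm\delta^1(\ad_v)(x)$ up to the sign conventions. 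Moreover $\partial^1_{\Lie}(\ad_v)=0$, since inner derivations are Chevalley--Eilenberg 1-cocycles. Therefore $\partial^1_{\RBA}(0,v)=\partial^1_{\RBA}(\pm\ad_v,0)$. Replacing $\psi$ by $\psi':=\psi\pm\ad_v$, we obtain $(\mu_n,T_n)=\partial^1_{\RBA}(\psi',0)$. Checking this sign bookkeeping against Eq.~(\ref{1-cocycle}) is the only delicate point; the relation it encodes is exactly the 1-cocycle identity there with $f=\ad_v$.

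\textbf{Gauge transformation and conclusion.} Set $\psi_t=\id-\psi' t^n$. This is invertible over $\bfk[[t]]$. Define
\[
\mu'_t=\psi_t^{-1}\circ\mu_t\circ(\psi_t\ot\psi_t), \qquad T'_t=\psi_t^{-1}\circ T_t\circ\psi_t .
\]
Then $(\frakg[[t]],\mu'_t,T'_t)$ is an equivalent deformation. Expanding as in the proof of the preceding theorem (with $t^n$ in place of $t$), its terms of order $1,\dots,n-1$ still vanish. Its term of order $n$ is $(\mu_n,T_n)-\partial^1_{\RBA}(\psi',0)=0$, with the sign of $\psi'$ chosen to match that computation. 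Iterating over $n$, the infinite product $\cdots\psi^{(3)}_t\psi^{(2)}_t\psi^{(1)}_t$ converges $t$-adically, because the $n$-th factor is $\id+O(t^n)$. This product is a formal isomorphism from the trivial deformation to $(\frakg[[t]],\mu_t,T_t)$. Hence every formal deformation is trivial, and $(\frakg,\pi,T)$ is rigid.
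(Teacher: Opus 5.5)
Your proof is correct and follows the paper's own argument: you kill the leading term $(\mu_n,T_n)$ with the gauge transformation $\id-\psi' t^n$. Before that, you absorb the $\Hom(\wedge^0\frakg,\frakg)$-component $v$ of the primitive into $\psi'$, exactly as the paper does with $\psi_1=\psi_1'+\partial^0_{\Lie}(x)$. This uses $\partial^0_{\Lie}(v)=\ad_v=[v,-]$ and $\delta^1\circ\partial^0_{\Lie}=\partial^0_{\RB}$, and the sign is $\psi'=\psi+\ad_v$.

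You are more explicit than the paper about the $t$-adic limit, but the composition order is reversed. With your convention $\mu'_t=\psi_t^{-1}\circ\mu_t\circ(\psi_t\ot\psi_t)$, the formal isomorphism from the trivial deformation to $(\mu_t,T_t)$ is $\psi^{(1)}_t\circ\psi^{(2)}_t\circ\psi^{(3)}_t\circ\cdots$, not $\cdots\psi^{(3)}_t\psi^{(2)}_t\psi^{(1)}_t$.
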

	
	\begin{proof}Let $(\frakg[[t]], \mu_t, T_t)$ be a $1$-parameter formal deformation of $(\frakg,\pi,T)$. By Proposition~\ref{Prop: Infinitesimal is 2-cocycle},
		$(\mu_1, T_1)$ is a $2$-cocycle. By $\rmH^2_{\RBA}(\frakg)=0$, there exists a $1$-cochain $$(\psi_1', x) \in \C^1_\RBA(\frakg)= C^1_{\Lie}(\frakg)\oplus \Hom(\bfk, \frakg)$$ such that
		$(\mu_1, T_1) =  \partial_{\RBA}(\psi_1', x), $
		that is, $\mu_1=\partial_{\Lie}^1(\psi_1')$ and $T_1=-\partial_{\RB}^0(x)-\delta^1(\psi_1')$. Let $\psi_1=\psi_1'+\partial_{\Lie}^0(x)$. Then
		$\mu_1= \partial_{\Lie}^1(\psi_1)$ and $T_1=-\delta^1(\psi_1)$, as it can be readily seen that $\delta^1(\partial_{\Lie}^0(x))=\partial_{\RB}^0(x)$.
		
		Setting $\psi_t = \Id_\frakg -\psi_1t$, we have a deformation $(\frakg[[t]], \overline{\mu}_t, \overline{T}_t)$, where
		$$\overline{\mu}_t=\psi_t^{-1}\circ \mu_t\circ (\psi_t\times \psi_t)$$
		and $$\overline{T}_t=\psi_t^{-1}\circ T_t\circ \psi_t.$$
		It can be easily verified  that $\overline{\mu}_1=0, \overline{T}_1=0$. Then
		$$\begin{array}{rcl} \overline{\mu}_t&=& \pi+\overline{\mu}_2t^2+\cdots,\\
			\overline{T}_t&=& T+\overline{T}_2t^2+\cdots.\end{array}$$
		By Equations~(\ref{Eq: general formal of deform product in RBA}) and (\ref{Eq: general formal of deform RBO in RBA}), we see that $(\overline{\mu}_2,  \overline{T}_2)$ is still a $2$-cocycle, so by induction, we can show that
		$ (\frakg[[t]], \mu_t , T_t) $ is equivalent to the trivial  deformation $(\frakg[[t]], \pi, T).$
		Thus, $(\frakg,\pi,T)$ is rigid.
		
	\end{proof}
	
	\bigskip

	\section{Abelian extensions of extended Rota-Baxter Lie algebras}\label{sec 8}
	
	In this section, we study abelian extensions of extended Rota-Baxter Lie algebras and show that they are classified by the second cohomology group.

	Notice that a vector space $V$ together with a linear transformation $T_V:V\to V$ is naturally an extended Rota-Baxter Lie algebra where the bracket on $V$ is defined to be $[u,v]=0$ for all $u,v\in V.$
	
	\begin{defn}
		An   abelian extension  of extended Rota-Baxter Lie algebras is a short exact sequence of  morphisms of extended Rota-Baxter Lie algebras
		\begin{eqnarray}\label{Eq: abelian extension} 0\to (V, T_V)\stackrel{i}{\to} (\hat{\frakg}, \hat{T})\stackrel{p}{\to} (\frakg, T)\to 0,
		\end{eqnarray}
		that is, there exists a commutative diagram:
		\[\begin{CD}
			0@>>> {V} @>i >> \hat{\frakg} @>p >> \frakg @>>>0\\
			@. @V {T_V} VV @V {\hat{T}} VV @V T VV @.\\
			0@>>> {V} @>i >> \hat{\frakg} @>p >> \frakg @>>>0,
		\end{CD}\]
		where the extended Rota-Baxter Lie algebra $(V, T_V)$	satisfies  $[u,v]=0$ for all $u,v\in V.$
		
		We will call $(\hat{\frakg},\hat{T})$ an abelian extension of $(\frakg,\pi,T)$ by $(V,T_V)$.
	\end{defn}

	\begin{defn}
		Let $(\hat{\frakg}_1,\hat{T}_1)$ and $(\hat{\frakg}_2,\hat{T}_2)$ be two abelian extensions of $(\frakg,\pi,T)$ by $(V,T_V)$. They are said to be  isomorphic  if there exists an isomorphism of extended Rota-Baxter Lie algebras $\zeta:(\hat{\frakg}_1,\hat{T}_1)\rar (\hat{\frakg}_2,\hat{T}_2)$ such that the following commutative diagram holds:
		\begin{eqnarray}\label{Eq: isom of abelian extension}\begin{CD}
				0@>>> {(V,T_V)} @>i >> (\hat{\frakg}_1,{\hat{T}_1}) @>p >> (\frakg,\pi,T) @>>>0\\
				@. @| @V \zeta VV @| @.\\
				0@>>> {(V,T_V)} @>i >> (\hat{\frakg}_2,{\hat{T}_2}) @>p >> (\frakg,\pi,T) @>>>0.
		\end{CD}\end{eqnarray}
	\end{defn}
	
	A   section of an abelian extension $(\hat{\frakg},{\hat{T}})$ of $(\frakg,\pi,T)$ by $(V,T_V)$ is a linear map $s:\frakg\rar \hat{\frakg}$ such that $p\circ s=\Id_\frakg$. We identify $V$ with $i(V).$
	
	\bigskip
	
	Let    $(\hat{\frakg},\hat{T})$ be  an abelian extension of $(\frakg,\pi,T)$ by $(V,T_V)$ having the form \eqref{Eq: abelian extension}. Choose a section $s:\frakg\rar \hat{\frakg}$. We   define
	$$
	[a,m]:=[s(a),m]_{\hat{\frakg}}, \quad \forall a\in \frakg, m\in V.
	$$
	When there is no ambiguity, we will omit the subscript of the bracket.
	\begin{prop}\label{Prop: new RB representations from abelian extensions}
		With the above notation, $(V, T_V)$ is an extended Rota-Baxter  representation over $(\frakg,\pi,T)$.
	\end{prop}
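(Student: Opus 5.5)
We need to show two things: the action $[a,m]=[s(a),m]_{\hat{\frakg}}$ makes $V$ a representation of the Lie algebra $(\frakg,\pi)$, and the pair $(V,T_V)$ satisfies the compatibility identity of Definition~\ref{Def: Rota-Baxter rep}. The plan is to use three facts throughout: $V=i(V)$ is an abelian ideal of $\hat{\frakg}$, since $V=\ker p$ and $[V,V]=0$; the map $p$ is a morphism commuting with the operators, $p\circ\hat T=T\circ p$; and $\hat T$ restricted to $V$ equals $T_V$, because $i$ is a morphism of extended Rota-Baxter Lie algebras.

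First, well-definedness and independence of the section. If $s'$ is another section, then $s(a)-s'(a)\in\ker p=V$. Since $V$ is abelian, $[s(a)-s'(a),m]=0$, so the action does not depend on the choice of section. More generally, $[x,m]_{\hat{\frakg}}=[p(x),m]$ for every $x\in\hat{\frakg}$ and $m\in V$. This identity is the key tool for the rest of the argument.

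Second, the representation property. We have $s([a,b])-[s(a),s(b)]\in V$, because applying $p$ gives zero. Hence $[[a,b],m]=[[s(a),s(b)],m]_{\hat{\frakg}}$, and the Jacobi identity in $\hat{\frakg}$ gives $[[a,b],m]=[a,[b,m]]-[b,[a,m]]$. Bilinearity is immediate.

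Third, the operator identity, which is where some care is needed. Apply the extended Rota-Baxter identity for $\hat T$ in $\hat{\frakg}$ to $x=s(a)$ and $y=m\in V$:
\[[\hat T s(a),\hat T m]=\hat T\big([\hat T s(a),m]+[s(a),\hat T m]+\mu[s(a),m]\big)+\lambda[s(a),m].\]
We then rewrite each term.
\begin{itemize}
\item We have $\hat T m=T_V m$.
\item Since $p(\hat T s(a))=T p(s(a))=T(a)$, the identity from the first step gives $[\hat T s(a),n]=[T(a),n]$ for any $n\in V$.
\item All the inner brackets lie in $V$, because $V$ is an ideal, so the outer $\hat T$ acts on them as $T_V$.
\end{itemize}
After these substitutions the displayed identity becomes exactly
\[[T(a),T_V(m)]=T_V\big([a,T_V(m)]+[T(a),m]+\mu[a,m]\big)+\lambda[a,m].\]
The only potential obstacle is replacing $\hat T s(a)$ by $s(T(a))$. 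The two are not equal, but their difference lies in $V$, so by the first step this replacement is harmless. Everything else is routine.
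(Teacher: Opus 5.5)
Your proof is correct and follows essentially the same route as the paper. The representation axiom comes from $s([a,b])-[s(a),s(b)]\in V$ together with the Jacobi identity in $\hat{\frakg}$, and the compatibility with $T_V$ comes from applying the identity for $\hat T$ to $s(a)$ and $m$, using that $\hat T(s(a))-s(T(a))\in V$ and that $\hat T$ restricts to $T_V$ on the ideal $V$. Your observation $[x,m]_{\hat{\frakg}}=[p(x),m]$ simply packages the paper's ``the difference lies in $V$'' steps, and it also gives the independence from the choice of section, which the paper proves separately later.
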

	\begin{proof}
		For any $a,b\in \frakg,\,m\in V$, since $s([a,b])-[s(a),s(b)]\in V$ implies $[s([a,b]),m]=[[s(a),s(b)],m]$, we have
		\[[ [a,b] , m]=[s([a,b]),m]=[[s(a),s(b)],m]\xlongequal{\mbox{Jacobi identity}}[a,[b,m]]-[b,[a,m]].\]
		Hence,  this gives a representation structure.

		Moreover, ${\hat{T}}(s(a))-s(T(a))\in V$ means that  $[{\hat{T}}(s(a)),m]=[s(T(a)),m]$. Thus we have
		\begin{align*}
			[T(a),T_V(m)]&=[s(T(a)),T_V(m)]\\
			&=[\hat{T}(s(a)),T_V(m)]\\
			&=\hat{T}([\hat{T}(s(a)),m]+[s(a),T_V(m)]+\mu [s(a),m])+\lambda [s(a),m] \\
			&=T_V([T(a),m]+[a,T_V(m)]+\mu[ a,m])+\lambda [a,m]
		\end{align*}

		Hence, $(V, T_V)$ is an extended  Rota-Baxter  representation over $(\frakg,\pi,T)$.
	\end{proof}
	
	We  further  define linear maps $\psi:\frakg\ot \frakg\rar V$ and $\chi:\frakg\rar V$ respectively by
	\begin{align*}
		\psi(a,b)&=[s(a),s(b)]-s([a,b]),\quad\forall a,b\in \frakg,\\
		\chi(a)&={\hat{T}}(s(a))-s(T(a)),\quad\forall a\in \frakg.
	\end{align*}

	\begin{prop}\label{psi and chi}
		The pair
		$(\psi,\chi)$ is a 2-cocycle  of the  extended Rota-Baxter Lie algebra $(\frakg,\pi,T)$ with  coefficients  in the extended Rota-Baxter representation $(V,T_V)$ introduced in Proposition~\ref{Prop: new RB representations from abelian extensions}.
	\end{prop}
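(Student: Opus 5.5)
Since $V$ is abelian, the cocycle condition in Remark~\ref{cocycle} for a pair $(f,g)\in\Hom(\wedge^2\frakg,V)\oplus\Hom(\frakg,V)$ splits into two parts: $\partial^2_\Lie f=0$, and Eq.~(\ref{2-cocycle}). The plan is to check each for $(f,g)=(\psi,\chi)$ directly in $\hat{\frakg}$, using only the Lie structure of $\hat{\frakg}$, the extended Rota-Baxter identity for $\hat T$, and the fact that $V$ is an abelian ideal.

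\textbf{The Lie part.} The identity $\partial^2_\Lie\psi=0$ is the classical Chevalley--Eilenberg computation. I would expand the Jacobi identity in $\hat{\frakg}$ for $s(a),s(b),s(c)$ and substitute $[s(a),s(b)]=s([a,b])+\psi(a,b)$ at every occurrence. The brackets of a $V$-component with an $s(\cdot)$ are, by definition, the action $[c,\psi(a,b)]$ from Proposition~\ref{Prop: new RB representations from abelian extensions}. The terms $s([[a,b],c])$ cyclically sum to zero by the Jacobi identity in $\frakg$. What remains is precisely the cyclic sum defining $\partial^2_\Lie\psi(a,b,c)$, up to the overall sign convention $(-1)^{i+n}$ of Definition~\ref{cohomology of algebra}.

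\textbf{The operator part.} This is the main step. Start from
$$[\hat T s(a),\hat T s(b)]=\hat T\big([\hat T s(a),s(b)]+[s(a),\hat T s(b)]+\mu[s(a),s(b)]\big)+\lambda[s(a),s(b)],$$
and substitute $\hat T s(x)=s(T x)+\chi(x)$ and $[s(x),s(y)]=s([x,y])+\psi(x,y)$. We also need $\hat T|_V=T_V$, which holds because $i$ is a morphism, so $\hat T$ acts on $V$-valued terms by $T_V$. Since $[V,V]=0$, the left side becomes
$$s([Ta,Tb])+\psi(Ta,Tb)+[Ta,\chi(b)]-[Tb,\chi(a)].$$
On the right side, $\hat T$ applied to $s(z)$ gives $s(Tz)+\chi(z)$, with
$$z=[Ta,b]+[a,Tb]+\mu[a,b]=[a,b]_\star.$$
The remaining $V$-valued terms give $T_V$ of
$$\psi(Ta,b)+\psi(a,Tb)+\mu\psi(a,b)-[b,\chi(a)]+[a,\chi(b)],$$
together with $\lambda\big(s([a,b])+\psi(a,b)\big)$.

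\textbf{Comparing components.} The $s(\frakg)$-components cancel exactly by the extended Rota-Baxter identity for $T$ in $\frakg$. The $V$-components then give
$$\psi(Ta,Tb)-T_V\psi(Ta,b)-T_V\psi(a,Tb)-\mu T_V\psi(a,b)-\lambda\psi(a,b)=\chi([a,b]_\star)-[Ta,\chi(b)]+T_V[a,\chi(b)]+[Tb,\chi(a)]-T_V[b,\chi(a)].$$
This is Eq.~(\ref{2-cocycle}) with $f=\psi$, $g=\chi$, $x=a$, $y=b$.

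The only real obstacle is sign and ordering bookkeeping. In the operator part, $[\chi(a),s(Tb)]$ must be rewritten as $-[Tb,\chi(a)]$, and one must confirm that the overall sign convention of $\partial_\RBA$, namely $-\partial_\RB-\delta$ with the coefficients $A_1=\lambda$, $A_0=0$, $A_{-1}=-1$, $B_1=1$, $B_0=0$, matches the displayed form of Eq.~(\ref{2-cocycle}). I would verify this by specializing $\delta$ at $n=2$ directly from its formula in Remark~\ref{cocycle}. If the conventions differ by a global sign on one component, the cocycle condition is unaffected.
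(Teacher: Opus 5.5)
Your proposal is correct and is essentially the paper's argument. Both proofs check $\partial^2_{\Lie}\psi=0$ via the Jacobi identity in $\hat{\frakg}$. Both obtain the operator part of the 2-cocycle condition by substituting $\hat T s(x)=s(Tx)+\chi(x)$ and $[s(x),s(y)]=s([x,y])+\psi(x,y)$ into the extended Rota-Baxter identity for $\hat T$ on $s(a),s(b)$, using $\hat T|_V=T_V$ and cancelling the $s(\frakg)$-parts via the identity for $T$.

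Your bookkeeping already lands exactly on the displayed 2-cocycle identity, so the closing sign hedge is unnecessary. Specialising $\delta^2$ uses $A_1=\lambda$, $A_{-1}=-1$, $B_2=\mu$ and $B_1=1$.
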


   	\begin{proof}
   	
   	To see $(\psi,\chi)$ is a  2-cocycle, it suffices to check the identities in Eq. (\ref{2-cocycle}).
   	
   	For the first identity, 
   	\begin{align*}
   		&-[a,\psi(b\ot c)]+[b,\psi(a\ot c)]-[c,\psi(a\ot b)]+\psi([a,b]\ot c)-\psi([a,c]\ot b)+\psi([b,c]\ot a)\\
   		=&-[a,([s(b),s(c)]-s[(b,c)])]+[b,[s(a),s(c)]-s([a,c])]-[c,[s(a),s(b)]-s([a,b])]\\
   		&+[s([a,b]),s(c)]-s([[a,b],c])-[s([a,c]),s(b)]-s([[a,c],b])+[s([b,c]),s(a)]-s([[b,c],a])\\
   		=&0
   	\end{align*}
   	Thus $\partial^2_\Lie (\psi)=0$.
   	
   	For the second identity, 
   	\begin{align*}
   		&[T(a),\chi(b)]+[\chi(a),T(b)]+\psi(T(a)\ot T(b))\\
   		=&[s(T(a)),({\hat{T}}(s(b))-s(T(b)))]+[({\hat{T}}(s(a))-s(T(a))),s(T(b))]+[s(T(a)),s(T(b))]-s([T(a),T(b)])\\
   		=&[\hat{T}(s(a)),({\hat{T}}(s(b))-s(T(b)))]+[({\hat{T}}(s(a))-s(T(a))),\hat{T}(s(b))]+[s(T(a)),s(T(b))]-s([T(a),T(b)])\\
   		=&2[\hat{T}(s(a)),\hat{T}(s(b))]-[\hat{T}(s(a)),s(T(b))]-[s(T(a)),\hat{T}(s(b))]+[s(T(a)),s(T(b))]-s([T(a),T(b)])\\
   		=&2[\hat{T}(s(a)),\hat{T}(s(b))]-[\hat{T}(s(a)),s(T(b))]-[\hat{T}(s(a)),(\hat{T}(s(b))-s(T(b)))]-s([T(a),T(b)])\\
   		=&[\hat{T}(s(a)),\hat{T}(s(b))]-s([T(a),T(b)])
   	\end{align*}
   	
   	\begin{align*}
   		&T_M([\chi(a),b])+T_M(\psi(T(a)\ot b))+\chi([T(a),b])+T_M([a,\chi(b)])+T_M(\psi(a\ot T(b)))+\chi([a,T(b)])\\
   		&+\mu T_M(\psi(a\ot b))+\lambda \psi(a\ot b)+\mu \chi([a,b])\\
   		=&\hat{T}([(\hat{T}(s(a))-s(T(a))),s(b)])+\hat{T}([s(T(a)),s(b)]-s([T(a),b]))+{\hat{T}}(s([T(a),b]))-s(T([T(a),b]))\\
   		&+\hat{T}([s(a),({\hat{T}}(s(b))-s(T(b)))])+\hat{T}([s(a),s(T(b))]-s([a,T(b)]))+{\hat{T}}(s([a,T(b)]))-s(T([a,T(b)]))\\
   		&+\mu \hat{T}([s(a),s(b)]-s([a,b]))+\lambda([s(a),s(b)]-s([a,b]))+\mu ({\hat{T}}(s([a,b]))-s(T([a,b])))\\
   		=&\hat{T}([\hat{T}(s(a)),s(b)]+[s(a),\hat{T}(s(b))]+\mu [s(a),s(b)])+\lambda [s(a),s(b)]\\
   		&-s(T([a,T(b)]+[T(a),b]+\mu [a,b]))-\lambda s([a,b])\\
   		=&[T(a),\chi(b)]+[\chi(a),T(b)]+\psi(T(a)\ot T(b))
   	\end{align*}
   	
   	Thus \[ \partial_{\RB}^1(\chi)+\delta^2(\psi)=0.\]
   	
   \end{proof}

	The choice of the section $s$ in fact determines a splitting
	$$\xymatrix{0\ar[r]&  V\ar@<1ex>[r]^{i} &\hat{\frakg}\ar@<1ex>[r]^{p} \ar@<1ex>[l]^{t}& \frakg \ar@<1ex>[l]^{s} \ar[r] & 0}$$
	subject to $t\circ i=\Id_V, t\circ s=0$ and $ it+sp=\Id_{\hat{\frakg}}$.
	Then there is an induced isomorphism of vector spaces
	$$\left(\begin{array}{cc} p& t\end{array}\right): \hat{\frakg}\cong   \frakg\oplus V: \left(\begin{array}{c} s\\ i\end{array}\right).$$
	We now  transfer the extended Rota-Baxter Lie algebra structure on $\hat{\frakg}$ to $\frakg\oplus V$ via this isomorphism. Since the following  proposition is obtained by straightforward  computation, we omit the details for brevity.
	\begin{prop}\label{bracket and operator}
		With the above notation. It is direct to verify that this  endows $\frakg\oplus V$ with a Lie bracket $\pi_\psi$ and an extended Rota-Baxter operator $T_\chi$ defined by
		\begin{align}
			\label{eq:mul}\pi_\psi((a,m),(b,n))&=([a,b],[a,n]-[b,m]+\psi(a,b)),\,\forall a,b\in \frakg,\,m,n\in V,\\
			\label{eq:dif}T_\chi(a,m)&=(T(a),\chi(a)+T_V(m)),\,\forall a\in \frakg,\,m\in V.
		\end{align}
		Moreover, we get an abelian extension
		$$0\to (V, T_V)\stackrel{\left(\begin{array}{cc} 0& 1\end{array}\right) }{\to} (\frakg\oplus V, T_\chi)\stackrel{\left(\begin{array}{c} 1\\ 0\end{array}\right)}{\to} (\frakg, T)\to 0$$
		which is easily seen to be  isomorphic to the original one \eqref{Eq: abelian extension}.
	\end{prop}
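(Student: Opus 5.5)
The plan is to transport the structure along the linear isomorphism $\Phi=\left(\begin{array}{c} s\\ i\end{array}\right):\frakg\oplus V\to\hat{\frakg}$, $(a,m)\mapsto s(a)+i(m)$, whose inverse is $\left(\begin{array}{cc} p& t\end{array}\right)$. I would define $\pi_\psi:=\Phi^{-1}\circ[-,-]_{\hat{\frakg}}\circ(\Phi\otimes\Phi)$ and $T_\chi:=\Phi^{-1}\circ\hat{T}\circ\Phi$. Because these are conjugations of an extended Rota-Baxter Lie structure by a linear isomorphism, $(\frakg\oplus V,\pi_\psi,T_\chi)$ is automatically an extended Rota-Baxter Lie algebra of weight $(\mu,\lambda)$. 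Indeed, the Jacobi identity and the defining identity (\ref{eq:diffwt}) are both multilinear identities in the bracket and the operator, so they are preserved under conjugation. This means no cocycle computation is needed for the structural claim. The only remaining task is to check that these transported maps are given by the explicit formulas (\ref{eq:mul}) and (\ref{eq:dif}).

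For the bracket, I would compute
\[[s(a)+m,s(b)+n]_{\hat{\frakg}}=[s(a),s(b)]+[s(a),n]-[s(b),m]+[m,n].\]
Here $[m,n]=0$ because $V$ is abelian. By definition $[s(a),s(b)]=s([a,b])+\psi(a,b)$, and $[s(a),n]=[a,n]$ is the action defined before Proposition~\ref{Prop: new RB representations from abelian extensions}. Applying $p$ and $t$, and using $p\circ s=\Id$, $p|_V=0$, $t\circ s=0$ and $t|_V=\Id$, gives exactly $([a,b],[a,n]-[b,m]+\psi(a,b))$.

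For the operator, I would compute
\[\hat{T}(s(a)+m)=s(T(a))+\chi(a)+T_V(m).\]
This uses the definition of $\chi$ together with $\hat T\circ i=i\circ T_V$, which comes from the commutative diagram. Applying $p$ and $t$ then yields $(T(a),\chi(a)+T_V(m))$.

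For the last assertion, $(0\ 1)$ and $(1\ 0)^{T}$ are the inclusion and the projection. They are morphisms because of the formulas above: $V$ is closed under $\pi_\psi$ with zero bracket, $T_\chi$ restricts to $T_V$ on $V$, and the first component of $\pi_\psi$ and of $T_\chi$ only involves $\frakg$. The isomorphism with (\ref{Eq: abelian extension}) is $\zeta=\Phi$. It is a morphism of extended Rota-Baxter Lie algebras by construction, and it satisfies $\Phi\circ(0\ 1)=i$ and $p\circ\Phi=(1\ 0)$, so the diagram (\ref{Eq: isom of abelian extension}) commutes.

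The only real obstacle is bookkeeping. I need to keep the identification of $V$ with $i(V)$ straight, and to check that the sign convention $[b,m]=[s(b),m]$ matches the term $-[s(b),m]$ coming from antisymmetry. Everything else is formal once transport of structure is used.
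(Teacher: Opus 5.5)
Your proposal is correct and takes the same route as the paper, which also obtains $\pi_\psi$ and $T_\chi$ by transferring the structure of $(\hat{\frakg},\hat{T})$ along the linear isomorphism built from $p,t$ and $s,i$, and then omits the computation as straightforward. Your explicit check of the two formulas and of the isomorphism $\zeta=\Phi$ of extensions supplies exactly those omitted details.
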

	
	By Propositions \ref{psi and chi} and \ref{bracket and operator}, we have:
	\begin{cor}\label{prop:2-cocycle}
		The triple $(\frakg\oplus V,\pi_\psi,T_\chi)$ is an extended Rota-Baxter Lie algebra   if and only if
		$(\psi,\chi)$ is a 2-cocycle  of the extended Rota-Baxter Lie algebra $(\frakg,\pi,T)$ with  coefficients  in $(V,T_V)$.
	\end{cor}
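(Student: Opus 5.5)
The plan is to verify directly the two conditions for $(\frakg\oplus V,\pi_\psi,T_\chi)$ to be an extended Rota-Baxter Lie algebra: the Jacobi identity for $\pi_\psi$, and the operator identity for $T_\chi$. For each condition, we will separate the $\frakg$-component from the $V$-component. We will use throughout that $(V,T_V)$ is an extended Rota-Baxter representation (Proposition~\ref{Prop: new RB representations from abelian extensions}, or by hypothesis when $(\psi,\chi)$ is an abstract cochain).

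\textbf{Jacobi identity.} Both $\pi_\psi$ and $T_\chi$ are upper triangular with respect to $\frakg\oplus V$. The $\frakg$-components of every identity therefore reduce to the Jacobi identity of $\pi$ and the extended Rota-Baxter identity of $T$, which hold. By trilinearity, it suffices to evaluate on elements that are either of the form $(a,0)$ or of the form $(0,m)$.

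On triples involving some $(0,m)$, the Jacobi identity is exactly the statement that $V$ is a $\frakg$-module. On a triple $(a,0),(b,0),(c,0)$, the $V$-component of the Jacobiator is the alternating sum $\sum_{\circlearrowleft}\big([a,\psi(b,c)]+\psi(a,[b,c])\big)$. Up to the overall sign convention of Remark~\ref{cocycle}, this is $\partial^2_{\Lie}\psi(a,b,c)$, so the Jacobi identity holds if and only if $\partial^2_\Lie\psi=0$.

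\textbf{Operator identity.} For inputs involving some $(0,m)$, the $V$-component of the identity for $T_\chi$ is precisely the defining identity of an extended Rota-Baxter representation; if both inputs lie in $V$, both sides vanish. It remains to compute on $(a,0),(b,0)$. The left side has $V$-component
\[[T(a),\chi(b)]-[T(b),\chi(a)]+\psi(T(a),T(b)).\]
The right side has $V$-component
\[\chi\big([T(a),b]+[a,T(b)]+\mu[a,b]\big)+T_V\big(\text{the $V$-part of }\pi_\psi(T_\chi a,b)+\pi_\psi(a,T_\chi b)+\mu\pi_\psi(a,b)\big)+\lambda\psi(a,b),\]
where that $V$-part equals $-[b,\chi(a)]+\psi(T(a),b)+[a,\chi(b)]+\psi(a,T(b))+\mu\psi(a,b)$.

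Setting these equal and rearranging should give exactly Eq.~(\ref{2-cocycle}). There, $g([x,y]_\star)$ is $\chi$ applied to $[T a,b]+[a,Tb]+\mu[a,b]$, and the terms $-[T(x),g(y)]+T_V[x,g(y)]+\dots$ are the twisted action $[-,-]_t$ of Proposition~\ref{descending property}. This is the same as $\partial^1_\RB\chi+\delta^2\psi=0$, using the explicit formula for $\delta$ with $A_{-1}=-1$, $A_0=0$, $A_1=\lambda$, $B_1=1$, $B_2=\mu$.

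Combining the two parts, both directions of the equivalence follow at once, since each step above is an ``if and only if''. The ``only if'' direction is also consistent with Proposition~\ref{psi and chi}.

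\textbf{Main obstacle.} The main obstacle is the bookkeeping of signs and of the coefficients $A_n,B_n$ when matching the expanded $V$-component with the conventions in Remark~\ref{cocycle}. I would first check the degree-$2$ values of $\delta$ explicitly and then compare term by term with Eq.~(\ref{2-cocycle}).
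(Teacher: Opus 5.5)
Your argument is correct, and the bookkeeping you flag as the main obstacle causes no trouble. Expanding the operator identity on $(a,0),(b,0)$ and moving the $\chi$-terms to the left reproduces the paper's 2-cocycle condition verbatim, with $g=\chi$ and $f=\psi$. With $A_{-1}=-1$, $A_0=0$, $A_1=\lambda$, $B_1=1$, $B_2=\mu$, its two sides are exactly $-\partial^1_{\RB}\chi$ and $\delta^2\psi$, so no sign adjustment is needed.

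Your route is more direct than the paper's. The paper obtains the corollary by citing two earlier propositions:
\begin{itemize}
\item[(i)] the pair $(\psi,\chi)$ attached to an abelian extension $\hat{\frakg}$ and a section $s$ is a 2-cocycle; this is proved by computing inside $\hat{\frakg}$, applying the Jacobi identity and the Rota--Baxter identity of $\hat T$ to lifts $s(a)$;
\item[(ii)] the structure of $\hat{\frakg}$ transfers to $(\frakg\oplus V,\pi_\psi,T_\chi)$.
\end{itemize}
Regard $(\frakg\oplus V,\pi_\psi,T_\chi)$ as an abelian extension with section $a\mapsto(a,0)$; this section returns exactly $(\psi,\chi)$ and the given action of $\frakg$ on $V$. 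That gives the ``only if'' direction immediately, without expanding anything.

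However, both cited propositions presuppose that an extension already exists. So the ``if'' direction really rests on the computation the paper omits as straightforward, or equivalently on reading the computation in $\hat{\frakg}$ backwards. Your component-wise check supplies exactly this. It proves both directions at once, and it shows explicitly that the mixed-input components are the representation axioms for $(V,T_V)$, which is why that hypothesis is needed. The paper's extension-theoretic route is shorter and more conceptual for the forward direction.
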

%
%
	\medskip
	
	Now we investigate the influence of different choices of   sections.

	\begin{prop}\ \label{prop: different sections give}
		\begin{itemize}
			\item[(i)] Different choices of the section $s$ give the same  extended Rota-Baxter representation structures on $(V, T_V)$.
			
			\item[(ii)]   The cohomology class of $(\psi,\chi)$ does not depend on the choice of sections.
			
		\end{itemize}
		
	\end{prop}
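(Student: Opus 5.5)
Take two sections $s_1,s_2:\frakg\to\hat{\frakg}$ with $p\circ s_1=p\circ s_2=\Id_\frakg$. Their difference lands in $\ker p=V$, so there is a linear map $\varphi:\frakg\to V$ with $s_2=s_1+\varphi$. Everything will be expressed in terms of $\varphi$.

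For (i), compare the two actions. We have
$[s_2(a),m]=[s_1(a),m]+[\varphi(a),m]$, and the last term vanishes because $V$ is abelian in $\hat{\frakg}$. So the $\frakg$-action on $V$ is the same for both sections. The operator $T_V$ is the restriction of $\hat{T}$ to $V$, so it does not involve the section at all. Hence the extended Rota-Baxter representation of Proposition~\ref{Prop: new RB representations from abelian extensions} is independent of $s$.

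For (ii), let $(\psi_i,\chi_i)$ be the pair built from $s_i$. For the bracket part, expand
\[\psi_2(a,b)=[s_1(a)+\varphi(a),s_1(b)+\varphi(b)]-s_1([a,b])-\varphi([a,b]).\]
The term $[\varphi(a),\varphi(b)]$ is zero since $V$ is abelian, and by (i) we have $[s_1(a),\varphi(b)]=[a,\varphi(b)]$. This gives
\[\psi_2(a,b)-\psi_1(a,b)=[a,\varphi(b)]-[b,\varphi(a)]-\varphi([a,b]),\]
which is the Chevalley--Eilenberg coboundary $\partial^1_\Lie\varphi$, up to the sign convention of Remark~\ref{cocycle}.

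For the operator part,
\[\chi_2(a)=\hat{T}(s_1(a))+\hat{T}(\varphi(a))-s_1(T(a))-\varphi(T(a)),\]
and since $\hat T$ restricts to $T_V$ on $V$, this gives
\[\chi_2(a)-\chi_1(a)=T_V(\varphi(a))-\varphi(T(a)).\]
This should equal $-\delta^1(\varphi)$. Computing from the formula for $\delta$ with $n=1$, using $A_0=0$, $A_{-1}=-1$ and $B_1=1$, gives $\delta^1(\varphi)(x)=\varphi(T(x))-T_V(\varphi(x))$, which is what is needed.

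Putting the two parts together, $(\psi_2,\chi_2)-(\psi_1,\chi_1)=\pm\partial^1_{\RBA}(\varphi,0)$. Here $\partial^1_{\RBA}(\varphi,0)=(\partial^1_\Lie\varphi,-\delta^1\varphi)$, exactly as in the proof of the equivalence theorem for formal deformations. So the two 2-cocycles are cohomologous.

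The main obstacle is bookkeeping of signs: the sign conventions in $\partial_{\Lie}$, the $s$-shift, and $\partial_\RBA$ must agree so that both components carry the same overall sign. I would fix this by checking directly against the $1$-cochain formula in Eq.~(\ref{1-cocycle}) and the computation in the equivalence theorem, replacing $\varphi$ by $-\varphi$ if necessary. A simultaneous sign change of $\varphi$ does not affect the conclusion that the classes coincide.
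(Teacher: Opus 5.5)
Your proof is correct and follows the paper's argument. The paper also writes the two sections as differing by a map $\gamma=s_1-s_2:\frakg\to V$ (the negative of your $\varphi$), uses that $V$ is abelian for (i), and expands $\psi$ and $\chi$ to get $(\psi_1,\chi_1)=(\psi_2,\chi_2)+\partial_{\RBA}(\gamma)$. Your sign hedging is unnecessary: your computed differences $\bigl(\partial^1_{\Lie}\varphi,\;T_V\varphi-\varphi T\bigr)$ are exactly $\partial^1_{\RBA}(\varphi,0)=(\partial^1_{\Lie}\varphi,-\delta^1\varphi)$ under the paper's conventions.
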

	\begin{proof}Let $s_1$ and $s_2$ be two distinct sections of $p$.
		We define $\gamma:\frakg\rar V$ by $\gamma(a)=s_1(a)-s_2(a)$.
		
		Since $[u,v]=0$ for all $u,v\in V$,
		$$[s_1(a),m]= [s_2(a),m]+[\gamma(a),m]=[s_2(a),m].$$ So different choices of the section $s$ give the same  extended Rota-Baxter representation structures on $(V, T_V)$;

		We   show that the cohomology class of $(\psi,\chi)$ does not depend on the choice of sections.   We have
		\begin{align*}
			\psi_1(a,b)&=[s_1(a),s_1(b)]-s_1([a,b])\\
			&=[(s_2(a)+\gamma(a)),(s_2(b)+\gamma(b))]-(s_2([a,b])+\gamma([a,b]))\\
			&=([s_2(a),s_2(b)]-s_2([a,b]))+[s_2(a),\gamma(b)]+[\gamma(a),s_2(b)]-\gamma([a,b])\\
			&=([s_2(a),s_2(b)]-s_2([a,b]))+[a,\gamma(b)]+[\gamma(a),b]-\gamma([a,b])\\
			&=\psi_2(a,b)+\partial_{\Lie}(\gamma)(a,b)
		\end{align*}
		and
		\begin{align*}
			\chi_1(a)&={\hat{T}}(s_1(a))-s_1(T(a))\\
			&={\hat{T}}(s_2(a)+\gamma(a))-(s_2(T(a))+\gamma(T(a)))\\
			&=({\hat{T}}(s_2(a))-s_2(T(a)))+{\hat{T}}(\gamma(a))-\gamma(T(a))\\
			&=\chi_2(a)+T_V(\gamma(a))-\gamma(T(a))\\
			&=\chi_2(a)-\delta^1(\gamma)(a).
		\end{align*}
		That is, $(\psi_1,\chi_1)=(\psi_2,\chi_2)+\partial_{\RBA}(\gamma)$. Thus $(\psi_1,\chi_1)$ and $(\psi_2,\chi_2)$ form the same cohomology class  {in $\rmH_{\RBA}^2(\frakg,V)$}.
		
	\end{proof}
	
	We show now the isomorphic abelian extensions give rise to the same cohomology class.
	\begin{prop}Let $V$ be a vector space and  $T_V\in\End_\bfk(V)$. Let $(V, T_V)$ be an extended Rota-Baxter Lie algebra with trivial bracket.
		Let $(\frakg,\pi,T)$ be an extended Rota-Baxter Lie algebra.
		Then any two isomorphic abelian extensions of extended Rota-Baxter Lie algebra $(\frakg, T)$ by  $(V, T_V)$  give rise to the same cohomology class  in $\rmH_{\RBA}^2(\frakg,V)$.
	\end{prop}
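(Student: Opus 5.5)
The plan is to transport a section through the isomorphism and compare cocycles directly. Let $\zeta:(\hat{\frakg}_1,\hat{T}_1)\rar(\hat{\frakg}_2,\hat{T}_2)$ be an isomorphism of abelian extensions as in \eqref{Eq: isom of abelian extension}. First choose a section $s_1:\frakg\rar\hat{\frakg}_1$ of $p_1$, and set $s_2:=\zeta\circ s_1$. Since $p_2\circ\zeta=p_1$, we get $p_2\circ s_2=p_1\circ s_1=\Id_\frakg$, so $s_2$ is a section of $p_2$. By Proposition~\ref{prop: different sections give}, the cohomology class attached to each extension does not depend on the choice of section. It therefore suffices to show that the 2-cocycles $(\psi_1,\chi_1)$ and $(\psi_2,\chi_2)$ built from $s_1$ and $s_2$ coincide as cochains, not merely up to coboundary.

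Before comparing cocycles, we check that both extensions induce the same extended Rota-Baxter representation on $(V,T_V)$, so that both cocycles live in the same complex $\rmC^*_{\RBA}(\frakg,V)$. Because $\zeta$ restricts to the identity on $V$ (that is, $\zeta\circ i_1=i_2$) and is a Lie algebra morphism,
\[
[s_2(a),m]_{\hat{\frakg}_2}=[\zeta(s_1(a)),\zeta(m)]_{\hat{\frakg}_2}=\zeta([s_1(a),m]_{\hat{\frakg}_1})=[s_1(a),m]_{\hat{\frakg}_1}.
\]
The last equality holds because $[s_1(a),m]_{\hat{\frakg}_1}$ lies in $V$, on which $\zeta$ acts as the identity.

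Next we compare the cocycles themselves. For the bracket part, since $\zeta$ is a morphism and $\psi_1(a,b)\in V$,
\[
\psi_2(a,b)=[\zeta s_1(a),\zeta s_1(b)]-\zeta s_1([a,b])=\zeta(\psi_1(a,b))=\psi_1(a,b).
\]
For the operator part, we use $\zeta\circ\hat{T}_1=\hat{T}_2\circ\zeta$ together with $\chi_1(a)\in V$:
\[
\chi_2(a)=\hat{T}_2(\zeta s_1(a))-\zeta s_1(T(a))=\zeta\big(\hat{T}_1 s_1(a)-s_1(T(a))\big)=\chi_1(a).
\]
Hence $(\psi_1,\chi_1)=(\psi_2,\chi_2)$, and the two extensions give the same class in $\rmH^2_{\RBA}(\frakg,V)$.

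There is no real obstacle in this argument. The only point needing care is that $\zeta$ acts as the identity on $V$ and commutes with the operators $\hat{T}_i$, which is exactly what the commutative diagram \eqref{Eq: isom of abelian extension} and $\zeta$ being a morphism of extended Rota-Baxter Lie algebras provide.
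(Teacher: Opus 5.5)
Your proposal is correct and follows the paper's proof step for step. You push the section forward to $s_2=\zeta\circ s_1$, check that the induced representations on $V$ coincide, and use that $\zeta$ fixes $V$ and intertwines $\hat{T}_1,\hat{T}_2$ to get $\psi_2=\psi_1$ and $\chi_2=\chi_1$ exactly; your explicit appeal to the section-independence of the cohomology class just spells out what the paper leaves implicit.
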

	\begin{proof}
		Assume that $(\hat{\frakg}_1,{\hat{T}_1})$ and $(\hat{\frakg}_2,{\hat{T}_2})$ are two isomorphic abelian extensions of $(\frakg,\pi,T)$ by $(V,T_V)$ as is given in \eqref{Eq: isom of abelian extension}. Let $s_1$ be a section of $(\hat{\frakg}_1,{\hat{T}_1})$. As $p_2\circ\zeta=p_1$, we have
		\[p_2\circ(\zeta\circ s_1)=p_1\circ s_1=\Id_{\frakg}.\]
		Therefore, $\zeta\circ s_1$ is a section of $(\hat{\frakg}_2,{\hat{T}_2})$. Denote $s_2:=\zeta\circ s_1$. Since $\zeta$ is a homomorphism of extended Rota-Baxter Lie algebras such that $\zeta|_V=\Id_V$, $\zeta([a,m])=\zeta([s_1(a),m])=[s_2(a),m]=[a,m]$, so $\zeta|_V: V\to V$ is compatible with the induced extended Rota-Baxter representation structures of $V$.
		We have
		\begin{align*}
			\psi_2(a,b)&=[s_2(a),s_2(b)]-s_2([a,b])=[\zeta(s_1(a)),\zeta(s_1(b))]-\zeta(s_1([a,b]))\\
			&=\zeta([s_1(a),s_1(b)]-s_1([a,b]))=\zeta(\psi_1(a,b))\\
			&=\psi_1(a,b)
		\end{align*}
		and
		\begin{align*}
			\chi_2(a)&={\hat{T}_2}(s_2(a))-s_2(T(a))={\hat{T}_2}(\zeta(s_1(a)))-\zeta(s_1(T(a)))\\
			&=\zeta({\hat{T}_1}(s_1(a))-s_1(T(a)))=\zeta(\chi_1(a))\\
			&=\chi_1(a).
		\end{align*}
		Consequently, two isomorphic abelian extensions give rise to the same element in {$\rmH_{\RBA}^2(\frakg,V)$}.
	\end{proof}
	\bigskip

	Now we consider the reverse direction.

	\begin{prop}
		Two cohomologous $2$-cocycles give rise to isomorphic abelian extensions.
	\end{prop}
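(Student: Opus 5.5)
Given two cohomologous $2$-cocycles $(\psi_1,\chi_1)$ and $(\psi_2,\chi_2)$ in $\rmC^2_{\RBA}(\frakg,V)$, we first use Corollary~\ref{prop:2-cocycle} to form the two extended Rota-Baxter Lie algebras $(\frakg\oplus V,\pi_{\psi_1},T_{\chi_1})$ and $(\frakg\oplus V,\pi_{\psi_2},T_{\chi_2})$. By Proposition~\ref{bracket and operator}, each of these is an abelian extension of $(\frakg,\pi,T)$ by $(V,T_V)$. Since the cocycles are cohomologous, there is a $1$-cochain in $\rmC^1_{\RBA}(\frakg,V)=\Hom(\frakg,V)\oplus V$ whose coboundary is the difference. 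We will first remove the $V$-component, exactly as in the rigidity theorem of Section~\ref{sec 7}. Given $(\gamma',v)$ with $(\psi_1,\chi_1)-(\psi_2,\chi_2)=\partial_{\RBA}(\gamma',v)$, we replace $\gamma'$ by $\gamma=\gamma'+\partial^0_{\Lie}(v)$. The identity $\delta^1\partial^0_{\Lie}=\partial^0_{\RB}$ (checked on $V$-coefficients using the formula for $\delta$ in Remark~\ref{cocycle} together with $A_0=0$, $B_1=1$) then gives
\[
\psi_1-\psi_2=\partial_{\Lie}(\gamma),\qquad \chi_1-\chi_2=-\delta^1(\gamma),
\]
with the sign conventions of Proposition~\ref{prop: different sections give}.

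Next we define $\zeta:\frakg\oplus V\to\frakg\oplus V$ by $\zeta(a,m)=(a,m+\gamma(a))$, with the sign of $\gamma$ adjusted if needed to match the conventions above. This map is clearly a linear isomorphism, and it restricts to the identity on $V$ and commutes with the projection to $\frakg$, so the diagram \eqref{Eq: isom of abelian extension} commutes. It remains to check that $\zeta$ is a morphism of extended Rota-Baxter Lie algebras from the extension built from $(\psi_1,\chi_1)$ to the one built from $(\psi_2,\chi_2)$.

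For the bracket we compute
\[
\pi_{\psi_2}(\zeta(a,m),\zeta(b,n))=\big([a,b],\,[a,n]+[a,\gamma(b)]-[b,m]-[b,\gamma(a)]+\psi_2(a,b)\big),
\]
and compare it with
\[
\zeta\,\pi_{\psi_1}((a,m),(b,n))=\big([a,b],\,[a,n]-[b,m]+\psi_1(a,b)+\gamma([a,b])\big).
\]
Equality is then exactly $\psi_1-\psi_2=\partial_{\Lie}\gamma$ in the form $[a,\gamma(b)]-[b,\gamma(a)]-\gamma([a,b])$. For the operator we compare
\[
T_{\chi_2}\zeta(a,m)=\big(T(a),\,\chi_2(a)+T_V(m)+T_V\gamma(a)\big)
\]
with
\[
\zeta T_{\chi_1}(a,m)=\big(T(a),\,\chi_1(a)+T_V(m)+\gamma(T(a))\big).
\]
Equality is exactly $\chi_1-\chi_2=T_V\gamma-\gamma T$, which is $\mp\delta^1(\gamma)$ by the explicit formula for $\delta^1$ (the only surviving terms are $-A_0\gamma(x)=0$, $-A_{-1}\gamma(T x)=\gamma(Tx)$ and $-B_1T_V\gamma(x)$).

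The main obstacle is purely bookkeeping of signs. We need to pin down that $\delta^1(\gamma)=\gamma\circ T-T_V\circ\gamma$ and that $\partial_{\Lie}^1$ in Remark~\ref{cocycle} carries the sign appearing in the computation of Proposition~\ref{prop: different sections give}. Once these conventions are fixed consistently with that proposition, whose computation is literally the inverse of this one, the choice of $\pm\gamma$ in $\zeta$ is forced and both verifications reduce to the two coboundary identities above.
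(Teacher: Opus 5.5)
Your proposal is correct and is the same argument as the paper's. You absorb the $V$-component of the $1$-cochain via $\gamma=\gamma'+\partial^0_{\Lie}(v)$ using $\partial^0_{\RB}=\delta^1\circ\partial^0_{\Lie}$, then take $\zeta(a,m)=(a,m+\gamma(a))$; your explicit bracket and operator checks fill in what the paper only asserts as ``$\zeta$ is an isomorphism''. The sign hedging is unnecessary: your own evaluation gives $\delta^1(\gamma)=\gamma\circ T-T_V\circ\gamma$, so $\chi_1-\chi_2=T_V\gamma-\gamma T=-\delta^1(\gamma)$ holds exactly and $\zeta$ needs no sign adjustment.
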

	\begin{proof}
		
		Given two 2-cocycles $(\psi_1,\chi_1)$ and $(\psi_2,\chi_2)$, we can construct two abelian extensions $(\frakg\oplus V,\cdot_{\psi_1},T_{\chi_1})$ and  $(\frakg\oplus V,\cdot_{\psi_2},T_{\chi_2})$ via Equations~\eqref{eq:mul} and \eqref{eq:dif}. If they represent the same cohomology  class {in $\rmH_{\RBA}^2(\frakg,V)$}, then there exist two linear maps $\gamma_0:\textbf{k}\rightarrow V, \gamma_1:\frakg\to V$ such that $$(\psi_1,\chi_1)=(\psi_2,\chi_2)+(\partial_{\Lie}^1(\gamma_1),-\delta^1(\gamma_1)-\partial_{\RB}^0(\gamma_0)).$$
		Notice that $\partial_{\RB}^0=\delta^1\circ\partial_{\Lie}^0$. Define $\gamma: \frakg\rightarrow V$ to be $\gamma_1+\partial_{\Lie}^0(\gamma_0)$. Then $\gamma$ satisfies
		\[(\psi_1,\chi_1)=(\psi_2,\chi_2)+(\partial_{\Lie}^1(\gamma),-\delta^1(\gamma)).\]
		
		Define $\zeta:\frakg\oplus V\rar \frakg\oplus V$ by
		\[\zeta(a,m):=(a, \gamma(a)+m).\]
		Then $\zeta$ is an isomorphism of these two abelian extensions $(\frakg\oplus V,\cdot_{\psi_1},T_{\chi_1})$ and  $(\frakg\oplus V,\cdot_{\psi_2},T_{\chi_2})$.
	\end{proof}
	
	Finally, by all the propositions above in this section, we have:
	\begin{prop}
		There is a bijection between the isomorphism classes of  abelian extensions of $(\frakg,\pi,T)$ by $(V,T_V)$ and the second cohomology group   ${\rmH}_{\RBA}^2(\frakg,V)$.
	\end{prop}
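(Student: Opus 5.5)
We assemble the bijection from the preceding propositions of this section by writing down maps in both directions and checking that they are mutually inverse.

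\emph{The map $\Phi$.} Given an abelian extension $(\hat{\frakg},\hat{T})$ of $(\frakg,\pi,T)$ by $(V,T_V)$, choose a section $s$ and set $\Phi([\hat{\frakg},\hat{T}]):=[(\psi,\chi)]\in \rmH^2_{\RBA}(\frakg,V)$.
\begin{itemize}
\item By Proposition~\ref{Prop: new RB representations from abelian extensions}, $V$ acquires an extended Rota-Baxter representation structure.
\item By Proposition~\ref{prop: different sections give}(i), this representation structure does not depend on $s$.
\item By Proposition~\ref{psi and chi}, the pair $(\psi,\chi)$ is a $2$-cocycle with coefficients in this representation.
\item By Proposition~\ref{prop: different sections give}(ii), its class does not depend on $s$.
\item By the proposition on isomorphic extensions, isomorphic abelian extensions give the same class.
\end{itemize}
Hence $\Phi$ is well defined on isomorphism classes. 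Throughout, the representation on $V$ is the fixed one, and we consider only extensions inducing it.

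\emph{The map $\Psi$.} Given a class $[(\psi,\chi)]$, pick a representative cocycle and set $\Psi([(\psi,\chi)]):=[(\frakg\oplus V,\pi_\psi,T_\chi)]$, with $\pi_\psi$ and $T_\chi$ given by \eqref{eq:mul} and \eqref{eq:dif}.
\begin{itemize}
\item By Corollary~\ref{prop:2-cocycle}, $(\frakg\oplus V,\pi_\psi,T_\chi)$ is an extended Rota-Baxter Lie algebra.
\item The inclusion of $V$ and the projection onto $\frakg$ are morphisms of extended Rota-Baxter Lie algebras, so this is an abelian extension.
\item By the last proposition (cohomologous cocycles give isomorphic extensions), the isomorphism class does not depend on the representative.
\end{itemize}
Hence $\Psi$ is well defined.

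\emph{$\Psi\circ\Phi=\mathrm{id}$.} This is Proposition~\ref{bracket and operator}: transporting the structure along the splitting $(p,t)$ identifies $(\hat{\frakg},\hat{T})$ with $(\frakg\oplus V,\pi_\psi,T_\chi)$, and the identification commutes with the maps to and from $V$ and $\frakg$.

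\emph{$\Phi\circ\Psi=\mathrm{id}$.} This is a direct computation. Take the canonical section $s(a)=(a,0)$ of $\frakg\oplus V$. Then:
\begin{itemize}
\item $[s(a),s(b)]-s([a,b])=([a,b],\psi(a,b))-([a,b],0)=\psi(a,b)$;
\item $T_\chi(s(a))-s(T(a))=(T(a),\chi(a))-(T(a),0)=\chi(a)$;
\item the induced action $[(a,0),(0,m)]=(0,[a,m])$ recovers the original representation on $V$.
\end{itemize}
So $\Phi(\Psi[(\psi,\chi)])=[(\psi,\chi)]$.

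\emph{Main point of care.} The only delicate step is well-definedness: the class must be independent of all choices, and the relevant coboundaries must match. This is exactly where a degree-$1$ cochain $(\gamma_1,\gamma_0)$ is reduced to a single map $\gamma:\frakg\to V$ using $\partial_{\RB}^0=\delta^1\circ\partial^0_{\Lie}$, as in the preceding proof. Since this has already been handled there, the bijection follows.
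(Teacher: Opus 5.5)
Your proposal is correct and is essentially the paper's argument. The paper's proof consists only of the line ``by all the propositions above in this section,'' and you assemble exactly those results into two mutually inverse maps: independence of the section, invariance under isomorphism of extensions, the cocycle/extension correspondence, and the proposition that cohomologous cocycles give isomorphic extensions. Your explicit check of $\Phi\circ\Psi=\mathrm{id}$ via the canonical section $a\mapsto(a,0)$, and your caveat that the induced representation on $V$ must be fixed, are details the paper leaves implicit, and both are right.
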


	\bigskip
	
	\section{Homotopy extended Rota-Baxter Lie algebras}\label{sec 9}
	
	In this section, we will introduce the notion of homotopy extended Rota-Baxter Lie algebras with weight.
	
	Let $V=\bigoplus\limits_{i\in \mathbb{Z}}V^i$ be a graded space.
	\begin{defn}
		 The  \textbf{ graded  Nijenhuis-Richardson bracket}(aka.NR) $[\cdot,\cdot]_{\NR}$ on $\Hom(\rmS(V),V)$ is given as follows:
		for any $P\in \Hom(\rmS^{p+1}(V),V)$ of degree $n$ and $Q\in \Hom(\rmS^{q+1}(V),V)$ of degree m,
		\begin{equation*}
			[P,Q]_{\NR}:=P\bar\circ Q-(-1)^{nm}Q\bar\circ P,
		\end{equation*}
		where $P\bar\circ Q\in \mathrm{Hom}(\rmS^{p+q+1}(V),V)$ of degree $p+q$ is defined by
		\begin{equation*}
			\begin{aligned}
				P\bar\circ Q(v_1,\dots,v_{p+q+1})
				&=\sum_{\sigma\in \Sh(q+1,p)}\varepsilon(\sigma)P(Q(v_{\sigma(1)},\dots, v_{\sigma(q)}),v_{\sigma(q+1)},v_{\sigma(q+2)}\dots, v_{\sigma(p+q+1)}), \end{aligned}
		\end{equation*}
		for homogeneous elements $v_1, \dots, v_{p+q+1}\in V$.
		
	\end{defn}
	
	It is well known that $(\Hom(\rmS(V),V),[\cdot,\cdot]_{\NR})$ is an ordinary graded Lie algebra. Thus we have the following proposition.
	\begin{prop}\label{gla for Linfy}\
		
		\begin{enumerate}
			\item 	Let $\frakg$ be a graded space. Then $(s\mathrm{Hom}(\rmS (\frakg),\frakg),[\cdot,\cdot])$ is a graded Lie algebra(aka. gla) where we define $[sx,sy]:=(-1)^{|x|}s[x,y]_{\bf{NR}}$ for any homogeneous elements $x\in \mathrm{Hom}(\rmS^{p+1}(\frakg),\frakg)$ and $y\in \mathrm{Hom}(\rmS^{q+1}(\frakg),\frakg)$.
			\item An element  $s\pi=\{s\pi_i\}_{i\geqslant 1} \in s\mathrm{Hom}(\rmS(\frakg),\frakg)$ with $\pi_i\in \mathrm{Hom}(\rmS^i(\frakg),\frakg)$  is a Maurer-Cartan element   if and only if $(\frakg,\pi)$ is an  $L_\infty [1]$-algebra.
		\end{enumerate}
	\end{prop}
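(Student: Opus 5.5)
The plan is to deduce both parts from the classical fact, quoted just before the statement, that $(\Hom(\rmS(V),V),[\cdot,\cdot]_{\NR})$ is an ordinary graded Lie algebra. I would then transport this structure along the shift $s$ in exactly the way the Remark after Definition~\ref{curved dg Lie} does, and as was done for Proposition~\ref{gla for Lie}.

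For part (1), put $[sx,sy]:=(-1)^{|x|}s[x,y]_{\NR}$. I will check that this bracket satisfies conditions (0) and (4) of Definition~\ref{curved dg Lie} with $d_0=d_1=0$, where degrees are now measured in $s\Hom(\rmS(\frakg),\frakg)$, so that $|sx|=|x|-1$.

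\textbf{Symmetry.} Using $[x,y]_{\NR}=-(-1)^{|x||y|}[y,x]_{\NR}$ one gets
\[
[sy,sx]=(-1)^{|y|}s[y,x]_{\NR}=-(-1)^{|y|+|x||y|}s[x,y]_{\NR}.
\]
A short parity check shows $-(-1)^{|y|+|x||y|}=(-1)^{|x|}(-1)^{(|x|-1)(|y|-1)}$. This gives exactly graded symmetry (0).

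\textbf{Jacobi.} Expand $[[sx,sy],sz]=(-1)^{|x|+|x|+|y|}s[[x,y]_{\NR},z]_{\NR}=(-1)^{|y|}s[[x,y]_{\NR},z]_{\NR}$, and similarly for the two cyclic terms. Multiply the ordinary Jacobi identity of $[\cdot,\cdot]_{\NR}$ by a suitable sign; the shifted Jacobi identity (4) then follows. This sign bookkeeping is the only real content. It is the same computation as in the Remark showing that an ordinary graded Lie algebra on $s\frakg$ gives a graded Lie algebra on $\frakg$, applied here in the reverse direction, so I would cite that.

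For part (2), take $s\pi=\sum_i s\pi_i$ with each $\pi_i$ of degree $1$, so that $s\pi$ has degree $0$. The Maurer-Cartan equation with $d_0=d_1=0$ is $\frac12[s\pi,s\pi]=0$, which is $-\frac12 s[\pi,\pi]_{\NR}=0$, i.e. $[\pi,\pi]_{\NR}=0$. Since $\pi$ has odd degree, $[\pi,\pi]_{\NR}=2\,\pi\bar\circ\pi$.

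Now decompose by arity. The component of $\pi\bar\circ\pi$ in $\Hom(\rmS^n(\frakg),\frakg)$ is
\[
\sum_{i+j=n+1}\pi_j\bar\circ\pi_i .
\]
Evaluated on $v_1,\dots,v_n$, this is exactly
\[
\sum_{i}\sum_{\sigma\in\Sh(i,n-i)}\varepsilon(\sigma)\,\pi_{n-i+1}\big(\pi_i(v_{\sigma(1)},\dots,v_{\sigma(i)}),v_{\sigma(i+1)},\dots,v_{\sigma(n)}\big).
\]
This is the generalised Jacobi identity (ii) of Definition~\ref{Def:L[1]-infty}. Graded symmetry (i) holds automatically because the $\pi_i$ are defined on $\rmS^i(\frakg)$. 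Hence $s\pi$ is Maurer-Cartan if and only if $(\frakg,\{\pi_i\})$ is an $L_\infty[1]$-algebra.

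The main obstacle is keeping the Koszul signs consistent. The shuffle sign conventions in $\bar\circ$ must match those in Definition~\ref{Def:L[1]-infty}, and the arity bookkeeping must be right: in the definition of $\bar\circ$ the inner argument list should have $q+1$ entries. I would fix these conventions first and then read off both claims.
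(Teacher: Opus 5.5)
Your proposal is correct and follows the paper's route: the paper invokes the classical fact that $[\cdot,\cdot]_{\NR}$ makes $\Hom(\rmS(V),V)$ an ordinary graded Lie algebra and transports this structure along $s$. Your parity checks (graded symmetry, the shifted Jacobi identity, $[s\pi,s\pi]=-s[\pi,\pi]_{\NR}=-2s\,\pi\bar\circ\pi$, and the arity-by-arity identification with the generalised Jacobi identity) supply exactly the details the paper leaves implicit, and you are right that the inner argument list in the paper's definition of $\bar\circ$ should have $q+1$ entries.
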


	Recall $\overline{S}(V)=\bigoplus\limits_{n=1}^\infty \rmS^n(V)$, denote $$\mathfrak{C}_{\Lie}(V)=\Hom(\overline{\rmS}(V),V)$$ and
	$$ \mathfrak{C}_{\RB}(V)=\Hom(\overline{\rmS}(V),V).$$
	Set $$\mathfrak{C}_{\RBA}(V) =s\mathfrak{C}_{\Lie}(V)\oplus\mathfrak{C}_{\RB}(V).$$  Then    $\mathfrak{C}_{\RBA}(V)$ is  an $L_\infty$-algebra. The $L_\infty[1]$ structure is given by:
	
	\begin{thm}\label{thm: Linfinity for homotopy Lie}
		Keeping the above notation and donote $t=\sqrt{\mu^2-4\lambda}$, there exists an $L_\infty[1]$-algebra structure on  ${\mathfrak{C}_{\RBA}}(V)$, where $l_i$ are given by 
		$$l_1(sf)=-A_n f$$
		$$
		l_2(sf,sg)      =    (-1)^{|f|} s[f,g]_{\NR}, $$
		and for $i\geq 2$,
		\begin{equation*}
			\begin{aligned}
				l_i(sf,\xi_1,\cdots,\xi_{i-1})  =&\sum\limits_{\tau\in\Sh(m_1+1,\dots,m_{i-1}+1,n+2-i)}\varepsilon(\tau)\\ &
				-A_{n-i+1}\big(f(\xi_1\otimes\dots\otimes\xi_{i-1}\otimes\Id^{\otimes n+2-i})\big)\tau^{-1}\\
				&+\sum_{k=1}^{i-1}\sum\limits_{\tau\in\Sh(m_1+1,\dots,m_{i-1}+1,n+3-i,m_k)}\varepsilon(\tau)(-1)^{(|f|+\sum\limits_{j=1 }^{k-1}|\xi_j|)|\xi_k|} \\ &
				-B_{n-i+3} \xi_k  \big(f(\xi_1\otimes\dots\otimes\xi_{i-1}\otimes\Id^{\otimes n+3-i})\o \Id^{\otimes m_k}\big)\tau^{-1}	
			\end{aligned}
		\end{equation*}
		
		For homogeneous elements	$f\in\Hom(\rmS^{n+1}(V),V)\subseteq \mathfrak{C}_{\Lie}(V)$, $g\in\Hom(\rmS^{m+1}(V),V)\subseteq \mathfrak{C}_{\Lie}(V)$    	and $\xi_j\in\Hom(\rmS^{m_j+1}(V),V)\subseteq \mathfrak{C}_{\RB}(V)$, $1\leq j\leq i-1$, and all other components vanish.
	\end{thm}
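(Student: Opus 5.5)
The plan is to reduce Theorem \ref{thm: Linfinity for homotopy Lie} to the verification already carried out for Theorem \ref{thm: Linfinity for extend Lie} (Appendix 1), observing that nothing there used that $\frakg$ sat in degree $0$ or that the cochains were antisymmetric rather than graded symmetric. The sources of signs change: $sgn(\tau)$ becomes the Koszul sign $\varepsilon(\tau)$, and the explicit exponent $(n+\sum_{j<k}m_j)m_k$ becomes $(|f|+\sum_{j<k}|\xi_j|)|\xi_k|$. The arity bookkeeping ($f\in\Hom(\rmS^{n+1}(V),V)$, $\xi_j\in\Hom(\rmS^{m_j+1}(V),V)$) and all scalar coefficients $A_{n-i+1}$, $B_{n-i+3}$ are unchanged.

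First I will check well-definedness and graded symmetry. The maps $l_i(sf,\xi_1,\dots,\xi_{i-1})$ must lie in $\Hom(\rmS(V),V)$, which holds because we symmetrise over shuffles with Koszul signs. They must be graded symmetric in the $\xi$'s, which holds because permuting $\xi_j$ and $\xi_{j'}$ permutes the shuffle blocks and produces exactly the Koszul sign $(-1)^{|\xi_j||\xi_{j'}|}$ (after the shift). Each $l_i$ must have degree $1$, which is a count of degrees. The components with two or more entries from $s\mathfrak{C}_{\Lie}(V)$, other than $l_2(sf,sg)$, vanish by definition. The facts that $l_2$ on $s\mathfrak{C}_{\Lie}(V)$ is a graded Lie bracket and that $l_1\circ l_1=0$ follow from Proposition \ref{gla for Linfy} and from $l_1$ mapping $s\mathfrak{C}_{\Lie}$ into $\mathfrak{C}_{\RB}$, on which $l_1$ is zero.

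Next comes the generalised Jacobi identity. By multilinearity and the vanishing pattern, only inputs of the form $(sf,sg,\xi_1,\dots,\xi_{p})$ and $(sf,\xi_1,\dots,\xi_p)$ give nontrivial identities; inputs with three or more $s$-entries vanish. For $(sf,\xi_1,\dots,\xi_p)$, the nonzero terms are $l_{k}(l_{j}(sf,\xi_I),\xi_J)$ with $l_j(sf,\xi_I)\in\mathfrak{C}_{\RB}$. This requires $k=1$ or $j=1$, but $l_1$ vanishes on $\mathfrak{C}_{\RB}$ and no bracket of pure $\xi$'s exists, so those terms must cancel among themselves or be absent. The real content sits at $(sf,sg,\xi_1,\dots,\xi_p)$. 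There we get the terms $l_{p+1}(l_2(sf,sg),\xi)$, $l_{a+1}(sf, \xi_I, l_{b+1}(sg,\xi_J))$ together with its $f\leftrightarrow g$ counterpart, and, when $p$ is small, the terms involving $l_1(sf)=-A_nf$, which enter as an input of $l_{\cdot}(sg,\dots)$. I will expand each term as a sum over compositions of the form $f(\ldots g(\ldots)\ldots)$, $\xi_k f(\ldots g\ldots)$, $f(\ldots\xi_k g\ldots)$ and $\xi_k(\ldots)\xi_l(\ldots)$ inserted into shuffles. I will then group the terms by shape. Terms of the same shape arising twice, from the $f\bar\circ g$ part of the NR bracket and from nested insertions, cancel by the graded pre-Lie (associativity-type) property of $\bar\circ$ on $\Hom(\rmS(V),V)$, exactly as in the ungraded case.

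The remaining terms carry products of coefficients, and they cancel precisely by Lemma \ref{key lemma} in the forms $A_mA_n+A_{m+n+1}=A_{m+1}B_{n+1}$ and $A_mB_n+B_{m+n+1}=B_{m+1}B_{n+1}$. These identities are scalar and independent of the grading, so the cancellation pattern of Appendix 1 transfers verbatim. The main obstacle is the sign check. I must confirm that under the replacement $sgn\to\varepsilon$, each pair of terms that cancel in Appendix 1 acquires the same relative Koszul sign in the graded setting. The first thing I will try is to use the standard trick: the construction is natural in $V$ and expressible by operadic compositions with Koszul rule. Hence it suffices to verify the identity after tensoring with a graded-commutative Grassmann algebra, which turns the graded check into the ungraded computation of Theorem \ref{thm: Linfinity for extend Lie}. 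Failing that, I will track signs directly on the shape classes, using the fact that all of them arise from moving $\xi_k$ past $f$ and the earlier $\xi_j$.
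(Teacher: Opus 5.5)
Your fallback, rerunning the Appendix~1 bookkeeping with $\varepsilon(\tau)$ in place of $sgn(\tau)$ and $(-1)^{(|f|+\sum_{j<k}|\xi_j|)|\xi_k|}$ in place of $(-1)^{(n+\sum_{j<k}m_j)m_k}$, is exactly what the paper means by ``similar to Theorem~\ref{thm: Linfinity for extend Lie}''. Your primary idea is a genuinely different and more economical route. You multiply $f,g,\xi_j$ and the inputs by distinct odd Grassmann generators, so that all inputs become odd and each cochain has parity equal to its arity minus one, and then invoke the ungraded theorem. This replaces a second pass through all seven cancellation groups by a check on the defining formulas alone. For it to work you must verify two things:
\begin{enumerate}
\item[(i)] Theorem~\ref{thm: Linfinity for extend Lie} holds over the even part of a Grassmann algebra. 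This is fine, since the Appendix~1 argument is a formal cancellation of composites with coefficients polynomial in $\mu,\lambda$.
\item[(ii)] Every graded sign is the sign-rule transform of its ungraded counterpart. This includes the d\'ecalage sign $(-1)^{|f|}$ in $l_2$, the exponent $|f|$ rather than $|f|-1$ in the $\xi_k$-outermost terms, and the Koszul signs of the generalised Jacobi identity computed with degrees $|f|-1$ and $|\xi_j|$.
\end{enumerate}
Because degree equals arity minus one in the ungraded setting, (ii) is exactly where the arity-dependent coefficients $A_{n-i+1},B_{n-i+3}$ are decoupled from the degree-dependent signs. The paper's direct route is self-contained but has to recheck every sign term by term.

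One correction to your description of the mechanism: no term coming from $[f,g]_{\NR}$ cancels a nested insertion by a pre-Lie identity. Those terms carry single coefficients ($A_{l+m-n+1}$ or $B_{l+m-n+3}$ in the notation of Appendix~1). They vanish only inside the four- and five-term groups, for example $(1.1.2.1)+(1.2.1)+(2.1.1)+(3.1.2.2)+(4.1.1)=0$, via $A_aA_b+A_{a+b+1}=A_{a+1}B_{b+1}$ and its $B$-analogue. The $l_1(sf)$ and $l_1(sg)$ terms are not a small-$p$ effect. They supply boundary cases of these groups; in the group just cited, they are the cases where all, respectively none, of the $\xi$'s are inserted into $g$. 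The only genuinely two-term cancellations are between the nested families $(3)$ and $(4)$.
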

	
	\begin{rmk}
		The proof is  similar to Theorem \ref{thm: Linfinity for extend Lie}.
	\end{rmk}

	\begin{prop-def}\label{Def: homotopy RB algebras}
		Let $V=\bigoplus\limits_{i\in \mathbb{Z}}V^i$ be a graded space. Then a homotopy extended Rota-Baxter Lie algebra $(V,\pi=\{\pi_i\}_{i\geqslant 1}, T=\{T_i\}_{i\geqslant 1})$ of weight $(\mu,\lambda)$, where $\pi_i,T_i\in \Hom(\rmS^i (V),V)$ for each $i$, is defined to be such that $(s\pi,T)$ is a Maurer-Cartan element in the $L_\infty[1]$-algebra ${\mathfrak{C}_{\RBA}}(V)$. The explicit formulas are given as follows.
	\end{prop-def}

	\begin{proof}
		$(s\pi,T)=(\{s\pi_i\}_{i\geqslant 1}, \{T_i\}_{i\geqslant 1}) \in\mathcal{MC}(\mathfrak{C}_{\RBA}(V))$ if and only if
		$[\pi,\pi]_{NR}=0$, i.e. 
		\begin{eqnarray}
			\label{Eq: L infinity}   \sum_{i+j= n,\atop i\geq 0, j\geq 1}\sum_{\tau\in \Sh(j,n-j)} \varepsilon(\tau) \quad\pi_{i+1}\circ(\pi_j\ot \Id^{\ot i})\tau^{-1}=0,
		\end{eqnarray}
		
		and
		\begin{equation*}
			\begin{aligned}
				0 &= 	l_1(s\pi)+\sum_{k=2}^\infty\frac{1}{(k-1)!}l_k(s\pi,\underbrace{T,\dots,T}_{(k-1)\ \mathrm{times}}).
			\end{aligned}
		\end{equation*}

		For arbitrary  $(x_1,x_2,\dots,x_n)\in \rmS^n(V)$, we have
		
			\begin{align}\label{Eq: T infinity}
				0=	&-A_{n-1}\pi_n-\sum\limits_{\tau\in\Sh(j_1,\dots,j_k,m-k)\atop j_1+\dots+j_k+m-k=n}\varepsilon(\tau)\\ &
				A_{m-k-1} \big(\pi_m(T_{j_1}\otimes\dots\otimes T_{j_{k}}\otimes\Id^{\otimes m-k})\big)\tau^{-1}\notag\\
				&-\sum_{k=1}^{m-1}\sum\limits_{\tau\in\Sh(j_2,\dots,j_k,m-k+1,j_1-1)\atop j_1+\dots+j_k+m-k=n}\varepsilon(\tau)\notag\\ &
				B_{m-k+1} T_{j_1}  \big(\pi_m(T_{j_2}\otimes\dots\otimes T_{j_k}\otimes\Id^{\otimes m-k+1})\o \Id^{\otimes j_1-1}\big)\tau^{-1}.\notag
			\end{align}
	
	\end{proof}

	\begin{rmk}
		Notice that the Equation \ref{Eq: L infinity} is exactly the  generalised Jacobi identity \ref{graded Jacobi} and  $(V,\pi)$ is an $L_\infty [1]$-algebra. Then $T=\{T_i\}_{i\geqslant 1}$ is a homotopy extended Rota-Baxter operator on $(V,\pi)$.
	\end{rmk}

	For $n=1,2$, Equation~(\ref{Eq: T infinity})   gives
	\begin{eqnarray}
		\label{operator-differential}   \pi_1\circ T_1=T_1\circ \pi_1,\end{eqnarray}
	and \begin{eqnarray}
		\label{rbo-homotopy}  &  \pi_2\circ(T_1\ot T_1)-T_1\circ \pi_2\circ (\Id\ot T_1)-T_1\circ \pi_2\circ (T_1\ot \Id)-\mu T_1\circ \pi_2-\lambda \pi_2 \\
		\notag   &= -\pi_1\circ T_2+T_2\circ (\Id\ot \pi_1)+T_2\circ(\pi_1\ot \Id).
	\end{eqnarray}
	Equation~(\ref{operator-differential}) implies that $T_1: (V, \pi_1)\to (V, \pi_1)$ is a cochain map,  thus $T_1$ is well-defined on  $\rmH_\bullet(V, \pi_1)$;  Equation~(\ref{rbo-homotopy}) indicates that $T_1$ is an extended Rota-Baxter operator of weight $(\mu,\lambda)$ with respect to $\pi_2$ up to homotopy, whose  obstruction is just the operator $T_2$. As a consequence, $(\rmH_\bullet(V, \pi_1), \pi_2, T_1)$ is  an extended Rota-Baxter Lie algebra.
	
	\bigskip

	\section{Appendix $1$:The proof of Theorem \ref{thm: Linfinity for extend Lie}}\label{proof}

		We just need to justify the equalities 
		\begin{align*}
			\sum_{i=1}^{n+1}\sum_{\sigma\in \Sh(i,n+1-i)}\varepsilon(\sigma)l_{n-i+2}\circ (l_i\otimes \Id^{\otimes n-i+1})\sigma^{-1}(sf,sg,\xi_1,\dots,\xi_{n-1})=0,
		\end{align*}
	   for $|f|=l,|g|=m,|\xi_j|=m_j, j\ge 1.$ 
    	All other cases are trivial.
	
	\begin{align*}
		LHS=&	l_{n+1}(l_1(sf),sg,\xi_1,\dots,\xi_{n-1})+(-1)^{(l+1)(m+1)}l_{n+1}(l_1(sg),sf,\xi_1,\dots,\xi_{n-1})\qquad(\text{1})\\ 
		&+l_{n}(l_2(sf,sg),\xi_1,\dots,\xi_{n-1})\qquad(\text{2})\\  
		&+\sum_{i=2}^{n}\sum_{\sigma\in \Sh(i-1,n-i)} \varepsilon(\sigma)(-1)^{l(m+1)} l_{n-i+2}(sg,l_i(sf,\xi_{\sigma(1)},\dots,\xi_{\sigma(i-1)}),\xi_{\sigma(i)},\dots,\xi_{\sigma(n-1)})\qquad(\text{3})\\ 
		&+\sum_{i=2}^{n}\sum_{\sigma\in \Sh(i-1,n-i)} \varepsilon(\sigma)(-1)^{l+1}l_{n-i+2}(sf,l_i(sg,\xi_{\sigma(1)},\dots,\xi_{\sigma(i-1)}),\xi_{\sigma(i)},\dots,\xi_{\sigma(n-1)})\qquad(\text{4})     
	\end{align*}

	\begin{align*}
		(1)=&-A_{l}(-1)^{l(m+1)}l_{n+1}(sg,\xi^{1'}_0,\xi^{1'}_1,\dots,\xi^{1'}_{n-1})\\
		&\quad(\text{let } \xi^{1'}_{0}:=f \text{ and all other } \xi^{1'}_{k}:=\xi_{k}) \qquad(\text{1.1})\\  
		-&A_{m}(-1)^{l+1}l_{n+1}(sf,\xi^{1''}_{0},\xi^{1''}_1,\dots,\xi^{1''}_{n-1})\\
		& \quad(\text{let } \xi^{1''}_{0}:=g \text{ and all other } \xi^{1''}_{k}:=\xi_{k}) \qquad(\text{1.2})\\   
		=&A_l (-1)^{l(m+1)}\sum\limits_{\tau\in\Sh(m^{1'}_{0}+1,\dots,m^{1'}_{n-1}+1,m+1-n)}sgn(\tau) \\ 
		&A_{m-n}  \big(g(\xi^{1'}_{0}\otimes\dots\otimes\xi^{1'}_{n-1}\otimes\Id^{\otimes m+1-n})\big)\tau^{-1}    \qquad(\text{1.1.1})\\
		&+A_{l}(-1)^{l(m+1)}\sum_{k=1}^{n-1}\sum\limits_{\tau\in\Sh(m^{1'}_{0}+1,\dots,m^{1'}_{n-1}+1,m+2-n,m^{1'}_{k})}sgn(\tau)(-1)^{(l+\sum\limits_{j=1 }^{k-1}m^{1'}_{j})m^{1'}_{k}}  \\ 
		&B_{m-n+2} \xi^{1'}_{k}\big(g(\xi^{1'}_{0}\otimes\dots\otimes\xi^{1'}_{n-1}\otimes\Id^{\otimes m+2-n})\o \Id^{\otimes m^{1'}_{k}}\big)\tau^{-1}  \qquad(\text{1.1.2})\\
		&+A_{m}(-1)^{l+1}\sum\limits_{\tau\in\Sh(m^{1''}_{0}+1,\dots,m^{1''}_{n-1}+1,l+1-n)}sgn(\tau) \\ 
		&A_{l-n} \big(f(\xi^{1''}_{0}\otimes\dots\otimes\xi^{1''}_{n-1}\otimes\Id^{\otimes l+1-n})\big)\tau^{-1}                          \qquad(\text{1.2.1})\\
		&+A_{m}(-1)^{l+1}\sum_{k=1}^{i-1}\sum\limits_{\tau\in\Sh(m^{1''}_{0}+1,\dots,m^{1''}_{n-1}+1,l+2-n,m^{1''}_{k})}sgn(\tau)(-1)^{(m+\sum\limits_{j=1 }^{k-1}m^{1''}_{j})m^{1''}_{k}}  \\ 
		&B_{l-n+2} \xi^{1''}_{k}\big(f(\xi^{1''}_{0}\otimes\dots\otimes\xi^{1''}_{n-1}\otimes\Id^{\otimes l+2-n})\o \Id^{\otimes m^{1''}_{k}}\big)\tau^{-1} \qquad(\text{1.2.2})
	\end{align*}

	\begin{align*}
		(1.1.2)&=A_{l}(-1)^{l(m+1)}\sum_{k=1}^{n-1}\sum\limits_{\tau\in\Sh(m^{1'}_{0}+1,\dots,m^{1'}_{n-1}+1,m+2-n,m^{1'}_{k})}sgn(\tau)(-1)^{(l+\sum\limits_{j=1 }^{k-1}m^{1'}_{j})m^{1'}_{k}}  \\ 
		&B_{m-n+2} \xi^{1'}_{k}\big(g(\xi^{1'}_{0}\otimes\dots\otimes\xi^{1'}_{n-1}\otimes\Id^{\otimes m+2-n})\o \Id^{\otimes m^{1'}_{k}}\big)\tau^{-1}\text{ with $k=0$}\qquad(\text{1.1.2.1})\\
		+& \text{others}\qquad(\text{1.1.2.2}).
	\end{align*}

	\begin{align*}
		(1.2.2)&=A_{m}(-1)^{l+1}\sum_{k=1}^{i-1}\sum\limits_{\tau\in\Sh(m^{1''}_{0}+1,\dots,m^{1''}_{n-1}+1,l+2-n,m^{1''}_{k})}sgn(\tau)(-1)^{(m+\sum\limits_{j=1 }^{k-1}m^{1''}_{j})m^{1''}_{k}}  \\ 
		&B_{l-n+2} \xi^{1''}_{k}\big(f(\xi^{1''}_{0}\otimes\dots\otimes\xi^{1''}_{n-1}\otimes\Id^{\otimes l+2-n})\o \Id^{\otimes m^{1''}_{k}}\big)\tau^{-1} \text{ with $k=0$}\qquad(\text{1.2.2.1})\\
		+& \text{others}\qquad(\text{1.2.2.2}).
	\end{align*}

	\begin{align*}
		(2)=&(-1)^{l} l_{n}( s[f,g]_{\NR},\xi_1,\dots,\xi_{n-1})\\
		=&(-1)^{l+1} 
		\sum\limits_{\tau\in\Sh(m_{1}+1,\dots,m_{n-1}+1,l+m+2-n)}sgn(\tau)  A_{l+m-n+1}\\ 
		&(f\bar{\circ} g)(\xi_{1}\otimes\dots\otimes\xi_{n-1}\otimes\Id^{\otimes m+l+2-n})\tau^{-1} \qquad(\text{2.1.1})\\ 
		&+(-1)^{l+1}
		\sum_{k=1}^{n-1}\sum\limits_{\tau\in\Sh(m_{1}+1,\dots,m_{n-1}+1,m+l+3-n,m_{0})}sgn(\tau) (-1)^{(m+l+\sum\limits_{j=1 }^{k-1}m_{j})m_{k}}  B_{l+m-n+3} \\ 
		&\xi_{k}\big((f\bar{\circ} g)(\xi_{1}\otimes\dots\otimes\xi_{n-1}\otimes\Id^{\otimes m+l+3-n})\o \Id^{\otimes m_{k}}\big)\tau^{-1}    \qquad(\text{2.1.2})\\	
		&+(-1)^{l+ml}
		\sum\limits_{\tau\in\Sh(m_{1}+1,\dots,m_{n-1}+1,l+m+2-n)}sgn(\tau) A_{l+m-n+1}\\ 
		&(g\bar{\circ} f)(\xi_{1}\otimes\dots\otimes\xi_{n-1}\otimes\Id^{\otimes m+l+2-n})\tau^{-1} \qquad(\text{2.2.1})\\
		&+(-1)^{l+ml}
		\sum_{k=1}^{n-1}\sum\limits_{\tau\in\Sh(m_{1}+1,\dots,m_{n-1}+1,m+l+3-n,m_{k})}sgn(\tau)(-1)^{(m+l+\sum\limits_{j=1 }^{k-1}m_{j})m_{k}} B_{l+m-n+3} \\ 
		&\xi_{k}\big((g\bar{\circ} f)(\xi_{1}\otimes\dots\otimes\xi_{n-1}\otimes\Id^{\otimes m+l+3-n})\o \Id^{\otimes m_{k}}\big)\tau^{-1} \qquad(\text{2.2.2})
	\end{align*}

	\begin{align*}
		(3)=&\sum_{i=2}^{n}\sum_{\sigma\in \Sh(i-1,n-i)} \varepsilon(\sigma)(-1)^{l(m+1)}	l_{n-i+2}(sg, \sum\limits_{\tau\in\Sh(m_{\sigma(1)}+1,\dots,m_{\sigma(i-1)}+1,l+2-i)}sgn(\tau)  \\ 
		&-A_{l-i+1} \xi^{3'}_{\sigma(i-1)},\xi^{3'}_{\sigma(i)},\dots,\xi^{3'}_{\sigma(n-1)})\qquad(\text{3.1})\\     
		&(\text{Let}\quad \xi^{3'}_{\sigma(i-1)}:=\big(f(\xi_{\sigma(1)}\otimes\dots\otimes\xi_{\sigma(i-1)}\otimes\Id^{\otimes l+2-i})\big)\tau^{-1}\text{ and all other } \xi^{3'}_{k}:=\xi_{k})\\
		&+\sum_{i=2}^{n}\sum_{\sigma\in \Sh(i-1,n-i)} \varepsilon(\sigma)(-1)^{l(m+1)}	l_{n-i+2}(sg, \sum_{k=1}^{i-1}\sum\limits_{\tau\in\Sh(m_{\sigma(1)}+1,\dots,m_{\sigma(i-1)}+1,l+3-i,m_{\sigma(k)})}sgn(\tau)(-1)^{(l+\sum\limits_{j=1 }^{k-1}m_{\sigma(j)})m_{\sigma(k)}}  \\ 
		&-B_{l-i+3} \xi^{3''}_{\sigma(i-1)},\xi^{3''}_{\sigma(i)},\dots,\xi^{3''}_{\sigma(n-1)}) \qquad(\text{3.2})\\  
		&(\text{Let}\quad \xi^{3''}_{\sigma(i-1)}:= \xi_{\sigma(k)}\big(f(\xi_{\sigma(1)}\otimes\dots\otimes\xi_{\sigma(i-1)}\otimes\Id^{\otimes l+3-i})\o \Id^{\otimes m_{\sigma(k)}}\big)\tau^{-1}\text{ and all other } \xi^{3''}_{k}:=\xi_{k})\\
		=&\sum_{i=2}^{n}\sum_{\sigma\in \Sh(i-1,n-i)} \varepsilon(\sigma)(-1)^{l(m+1)} \sum\limits_{\tau\in\Sh(m_{\sigma(1)}+1,\dots,m_{\sigma(i-1)}+1,l+2-i)}sgn(\tau) \sum\limits_{\tau_1\in\Sh(m^{3'}_{\sigma(i-1)}+1,\dots,m^{3'}_{\sigma(n-1)}+1,m-n-i)}sgn(\tau_1) \\ 
		&A_{l-i+1} A_{m-n+i-1} \big(g(\xi^{3'}_{\sigma(i-1)}\otimes\dots\otimes\xi^{3'}_{\sigma(n-1)}\otimes\Id^{\otimes m-n+i})\big)\tau_{1}^{-1}                   \qquad(\text{3.1.1})\\
		&+\sum_{i=2}^{n}\sum_{\sigma\in \Sh(i-1,n-i)} \varepsilon(\sigma)(-1)^{l(m+1)} \sum\limits_{\tau\in\Sh(m_{\sigma(1)}+1,\dots,m_{\sigma(i-1)}+1,l+2-i)} \sum\limits_{\tau_1\in\Sh(m^{3'}_{\sigma(i-1)}+1,\dots,m^{3'}_{\sigma(n-1)}+1,m-n-i+1,m^{3'}_{\sigma(k)})}\\
		&\sum_{k=i-1}^{n-1}sgn(\tau)sgn(\tau_1) (-1)^{(m+\sum\limits_{j=i-1 }^{k-1}m^{3'}_{\sigma(j)})m^{3'}_{\sigma(k)}}A_{l-i+1} B_{m-n+i+1}\\
		& \xi^{3'}_{\sigma(k)}\big(g(\xi^{3'}_{\sigma(i-1)}\otimes\dots\otimes\xi^{3'}_{\sigma(n-1)}\otimes\Id^{\otimes m-n+i+1})\o \id^{\o m^{3'}_{\sigma(k)}}\big)\tau_{1}^{-1}            \qquad(\text{3.1.2})\\
		&+\sum_{i=2}^{n}\sum_{\sigma\in \Sh(i-1,n-i)} \varepsilon(\sigma)(-1)^{l(m+1)} \sum\limits_{\tau\in\Sh(m_{\sigma(1)}+1,\dots,m_{\sigma(i-1)}+1,l+3-i,m_{\sigma(k)})} \sum\limits_{\tau_1\in\Sh(m^{3''}_{\sigma(i-1)}+1,\dots,m^{3''}_{\sigma(n-1)}+1,m-n-i)}\\
		&\sum_{k=1}^{i-1}sgn(\tau)sgn(\tau_1)(-1)^{(l+\sum\limits_{j=1 }^{k-1}m_{\sigma(j)})m_{\sigma(k)}} B_{l-i+3} A_{m-n+i-1}\\
		& \big(g(\xi^{3''}_{\sigma(i-1)}\otimes\dots\otimes\xi^{3''}_{\sigma(n-1)}\otimes\Id^{\otimes m-n+i})\big)\tau_{1}^{-1}                 \qquad(\text{3.2.1})\\
		&+\sum_{i=2}^{n}\sum_{\sigma\in \Sh(i-1,n-i)} \varepsilon(\sigma)(-1)^{l(m+1)} \sum\limits_{\tau\in\Sh(m_{\sigma(1)}+1,\dots,m_{\sigma(i-1)}+1,l+3-i,m_{\sigma(k)})} \sum\limits_{\tau_1\in\Sh(m^{3''}_{\sigma(i-1)}+1,\dots,m^{3''}_{\sigma(n-1)}+1,m-n-i+1,m^{3''}_{\sigma(k')})}\\
		&\sum_{k=1}^{i-1}\sum_{k'=i-1}^{n-1}sgn(\tau)sgn(\tau_1)(-1)^{(l+\sum\limits_{j=1 }^{k-1}m_{\sigma(j)})m_{\sigma(k)}+(m+\sum\limits_{j=i-1 }^{k'-1}m^{3''}_{\sigma(j)})m^{3''}_{\sigma(k')}} B_{l-i+3} B_{m-n+i+1}\\ &\xi^{3''}_{\sigma(k')}\big(g(\xi^{3''}_{\sigma(i-1)}\otimes\dots\otimes\xi^{3''}_{\sigma(n-1)}\otimes\Id^{\otimes m-n+i+1})\o \id^{\o m^{3''}_{\sigma(k')}}\big)\tau_{1}^{-1} \qquad(\text{3.2.2})
	\end{align*}

	\begin{align*}
		(3.1.2)&=\sum_{i=2}^{n}\sum_{\sigma\in \Sh(i-1,n-i)} \varepsilon(\sigma)(-1)^{l(m+1)} \sum\limits_{\tau\in\Sh(m_{\sigma(1)}+1,\dots,m_{\sigma(i-1)}+1,l+2-i)} \sum\limits_{\tau_1\in\Sh(m^{3'}_{\sigma(i-1)}+1,\dots,m^{3'}_{\sigma(n-1)}+1,m-n-i+1,m^{3'}_{\sigma(k)})}\\
		&\sum_{k=i-1}^{n-1}sgn(\tau)sgn(\tau_1) (-1)^{(m+\sum\limits_{j=i-1 }^{k-1}m^{3'}_{\sigma(j)})m^{3'}_{\sigma(k)}}A_{l-i+1} B_{m-n+i+1}\\
		& \xi^{3'}_{\sigma(k)}\big(g(\xi^{3'}_{\sigma(i-1)}\otimes\dots\otimes\xi^{3'}_{\sigma(n-1)}\otimes\Id^{\otimes m-n+i+1})\o \id^{\o m^{3'}_{\sigma(k)}}\big)\tau_{1}^{-1} \\ 
		&\text{ with $\sigma(k)\neq i-1$}\qquad(\text{3.1.2.1})\\
		+&\sum_{i=2}^{n}\sum_{\sigma\in \Sh(i-1,n-i)} \varepsilon(\sigma)(-1)^{l(m+1)} \sum\limits_{\tau\in\Sh(m_{\sigma(1)}+1,\dots,m_{\sigma(i-1)}+1,l+2-i)} \sum\limits_{\tau_1\in\Sh(m^{3'}_{\sigma(i-1)}+1,\dots,m^{3'}_{\sigma(n-1)}+1,m-n-i+1,m^{3'}_{\sigma(k)})}\\
		&\sum_{k=i-1}^{n-1}sgn(\tau)sgn(\tau_1) (-1)^{(m+\sum\limits_{j=i-1 }^{k-1}m^{3'}_{\sigma(j)})m^{3'}_{\sigma(k)}}A_{l-i+1} B_{m-n+i+1}\\
		& \xi^{3'}_{\sigma(k)}\big(g(\xi^{3'}_{\sigma(i-1)}\otimes\dots\otimes\xi^{3'}_{\sigma(n-1)}\otimes\Id^{\otimes m-n+i+1})\o \id^{\o m^{3'}_{\sigma(k)}}\big)\tau_{1}^{-1} \\ 
		&\text{ with $\sigma(k)=i-1$,  $f$ and $g$ are directly composed}\qquad(\text{3.1.2.2})\\
		+& \text{others}\qquad(\text{3.1.2.3}).
	\end{align*}
	
	\begin{rmk}
     $f$ and $g$ are directly composed means that $f\circ (g\otimes \Id^{\otimes n})$ appears  in this iterative composition.
	\end{rmk}

	\begin{align*}
		(3.2.2)&=\sum_{i=2}^{n}\sum_{\sigma\in \Sh(i-1,n-i)} \varepsilon(\sigma)(-1)^{l(m+1)} \sum\limits_{\tau\in\Sh(m_{\sigma(1)}+1,\dots,m_{\sigma(i-1)}+1,l+3-i,m_{\sigma(k)})} \sum\limits_{\tau_1\in\Sh(m^{3''}_{\sigma(i-1)}+1,\dots,m^{3''}_{\sigma(n-1)}+1,m-n-i+1,m^{3''}_{\sigma(k')})}\\
		&\sum_{k=1}^{i-1}\sum_{k'=i-1}^{n-1}sgn(\tau)sgn(\tau_1)(-1)^{(l+\sum\limits_{j=1 }^{k-1}m_{\sigma(j)})m_{\sigma(k)}+(m+\sum\limits_{j=i-1 }^{k'-1}m^{3''}_{\sigma(j)})m^{3''}_{\sigma(k')}} B_{l-i+3} B_{m-n+i+1}\\ &\xi^{3''}_{\sigma(k')}\big(g(\xi^{3''}_{\sigma(i-1)}\otimes\dots\otimes\xi^{3''}_{\sigma(n-1)}\otimes\Id^{\otimes m-n+i+1})\o \id^{\o m^{3''}_{\sigma(k')}}\big)\tau_{1}^{-1}\\
		&\text{ with $\sigma(k')=i-1$,  $f$ and $g$ are directly composed}\qquad(\text{3.2.2.1})\\
		+& \text{others}\qquad(\text{3.2.2.2}).
	\end{align*}
	
	\begin{align*}
		(4)=&\sum_{i=2}^{n}\sum_{\sigma\in \Sh(i-1,n-i)} \varepsilon(\sigma)(-1)^{l+1}l_{n-i+2}(sf,\sum\limits_{\tau\in\Sh(m_{\sigma(1)}+1,\dots,m_{\sigma(i-1)}+1,m+2-i)}sgn(\tau) \\ 
		&-A_{m-i+1}  \xi^{4'}_{\sigma(i-1)},\xi^{4'}_{\sigma(i)},\dots,\xi^{4'}_{\sigma(n-1)})\qquad(\text{4.1})\\     
		&(\text{Let}\quad \xi^{4'}_{\sigma(i-1)}:=\big(g(\xi_{\sigma(1)}\otimes\dots\otimes\xi_{\sigma(i-1)}\otimes\Id^{\otimes m+2-i})\big)\tau^{-1} \text{ and all other } \xi^{4'}_{k}:=\xi_{k})\\
		&+\sum_{i=2}^{n} \sum_{\sigma\in \Sh(i-1,n-i)} \varepsilon(\sigma)(-1)^{l+1} l_{n-i+2}(sf,\sum_{k=1}^{i-1}\sum\limits_{\tau\in\Sh(m_{\sigma(1)}+1,\dots,m_{\sigma(i-1)}+1,m+3-i,m_{\sigma(k)})}sgn(\tau)(-1)^{(m+\sum\limits_{j=1 }^{k-1}m_{\sigma(j)})m_{\sigma(k)}}  \\ 
		&-B_{m-i+3}\xi_{\sigma(i-1)},\xi^{4''}_{\sigma(i)},\dots,\xi^{4''}_{\sigma(n-1)})    \qquad(\text{4.2})\\  
		&(\text{Let}\quad \xi^{4''}_{\sigma(i-1)}:= \xi_{\sigma(k)}\big(g(\xi_{\sigma(1)}\otimes\dots\otimes\xi_{\sigma(i-1)}\otimes\Id^{\otimes m+3-i})\o \Id^{\otimes m_{\sigma(k)}}\big)\tau^{-1}\text{ and all other } \xi^{4''}_{k}:=\xi_{k})\\
		=&\sum_{i=2}^{n}\sum_{\sigma\in \Sh(i-1,n-i)} \varepsilon(\sigma)(-1)^{l+1} \sum\limits_{\tau\in\Sh(m_{\sigma(1)}+1,\dots,m_{\sigma(i-1)}+1,m+2-i)}sgn(\tau) \sum\limits_{\tau_1\in\Sh(m^{4'}_{\sigma(i-1)}+1,\dots,m^{4'}_{\sigma(n-1)}+1,l-n-i)}sgn(\tau_1) \\ 
		&A_{m-i+1} A_{l-n+i-1} \big(f(\xi^{4'}_{\sigma(i-1)}\otimes\dots\otimes\xi^{4'}_{\sigma(n-1)}\otimes\Id^{\otimes m-n+i})\big)\tau_{1}^{-1}            \qquad(\text{4.1.1})\\
		&+\sum_{i=2}^{n}\sum_{\sigma\in \Sh(i-1,n-i)} \varepsilon(\sigma)(-1)^{l+1}\sum_{k=i-1}^{n-1} \sum\limits_{\tau\in\Sh(m_{\sigma(1)}+1,\dots,m_{\sigma(i-1)}+1,m+2-i)} \sum\limits_{\tau_1\in\Sh(m^{4'}_{\sigma(i-1)}+1,\dots,m^{4'}_{\sigma(n-1)}+1,l-n-i+1,m^{4'}_{\sigma(k)})}\\
		&sgn(\tau)sgn(\tau_1)(-1)^{(l+\sum\limits_{j=i-1 }^{k-1}m^{4'}_{\sigma(j)})m^{4'}_{\sigma(k)}}A_{m-i+1} B_{m-n+i+1}\\ &\xi^{4'}_{\sigma(k)}\big(f(\xi^{4'}_{\sigma(i-1)}\otimes\dots\otimes\xi^{4'}_{\sigma(n-1)}\otimes\Id^{\otimes l-n+i})\o \id^{\o m^{4'}_{\sigma(k)}}\big)\tau_{1}^{-1}      \qquad(\text{4.1.2})\\
		&+\sum_{i=2}^{n}\sum_{\sigma\in \Sh(i-1,n-i)} \varepsilon(\sigma)(-1)^{l+1}\sum_{k=1}^{i-1} \sum\limits_{\tau\in\Sh(m_{\sigma(1)}+1,\dots,m_{\sigma(i-1)}+1,m+3-i,m_{\sigma(k)})}sgn(\tau) \sum\limits_{\tau_1\in\Sh(m^{4''}_{\sigma(i-1)}+1,\dots,m^{4''}_{\sigma(n-1)}+1,l-n-i)} \\ 
		&sgn(\tau_1)(-1)^{(m+\sum\limits_{j=1 }^{k-1}m_{\sigma(j)})m_{\sigma(k)}}  B_{m-i+3} A_{l-n+i-1} \big(f(\xi^{4''}_{\sigma(i-1)}\otimes\dots\otimes\xi^{4''}_{\sigma(n-1)}\otimes\Id^{\otimes l-n+i})\big)\tau_{1}^{-1}             \qquad(\text{4.2.1})\\
		&+\sum_{i=2}^{n}\sum_{\sigma\in \Sh(i-1,n-i)} \varepsilon(\sigma)(-1)^{l+1}\sum_{k=1}^{i-1}\sum_{k'=i-1}^{n-1} \sum\limits_{\tau\in\Sh(m_{\sigma(1)}+1,\dots,m_{\sigma(i-1)}+1,m+3-i,m_{\sigma(k)})} \sum\limits_{\tau_1\in\Sh(m^{4''}_{\sigma(i-1)}+1,\dots,m^{4''}_{\sigma(n-1)}+1,l-n-i+1,m^{4''}_{\sigma(k')})}\\
		&sgn(\tau)sgn(\tau_1) (-1)^{(m+\sum\limits_{j=1 }^{k-1}m_{\sigma(j)})m_{\sigma(k)}+(l+\sum\limits_{j=i-1 }^{k'-1}m^{4''}_{\sigma(j)})m^{4''}_{\sigma(k')}}B_{m-i+3} B_{l-n+i+1}\\ &\xi^{4''}_{\sigma(k')}\big(f(\xi^{4''}_{\sigma(i-1)}\otimes\dots\otimes\xi^{4''}_{\sigma(n-1)}\otimes\Id^{\otimes l-n+i})\o \id^{\o m_{\sigma(k')}}\big)\tau_{1}^{-1} \qquad(\text{4.2.2})
	\end{align*}

	\begin{equation*}
		\begin{aligned}
			(4.1.2)&=\sum_{i=2}^{n}\sum_{\sigma\in \Sh(i-1,n-i)} \varepsilon(\sigma)(-1)^{l+1}\sum_{k=i-1}^{n-1} \sum\limits_{\tau\in\Sh(m_{\sigma(1)}+1,\dots,m_{\sigma(i-1)}+1,m+2-i)} \sum\limits_{\tau_1\in\Sh(m^{4'}_{\sigma(i-1)}+1,\dots,m^{4'}_{\sigma(n-1)}+1,l-n-i+1,m^{4'}_{\sigma(k)})}\\
			&sgn(\tau)sgn(\tau_1)(-1)^{(l+\sum\limits_{j=i-1 }^{k-1}m^{4'}_{\sigma(j)})m^{4'}_{\sigma(k)}}A_{m-i+1} B_{m-n+i+1}\\ &\xi^{4'}_{\sigma(k)}\big(f(\xi^{4'}_{\sigma(i-1)}\otimes\dots\otimes\xi^{4'}_{\sigma(n-1)}\otimes\Id^{\otimes l-n+i})\o \id^{\o m^{4'}_{\sigma(k)}}\big)\tau_{1}^{-1}\\
			&\text{ with $\sigma(k)\neq i-1$}\qquad(\text{4.1.2.1})\\
			+&\sum_{i=2}^{n}\sum_{\sigma\in \Sh(i-1,n-i)} \varepsilon(\sigma)(-1)^{l+1}\sum_{k=i-1}^{n-1} \sum\limits_{\tau\in\Sh(m_{\sigma(1)}+1,\dots,m_{\sigma(i-1)}+1,m+2-i)} \sum\limits_{\tau_1\in\Sh(m^{4'}_{\sigma(i-1)}+1,\dots,m^{4'}_{\sigma(n-1)}+1,l-n-i+1,m^{4'}_{\sigma(k)})}\\
			&sgn(\tau)sgn(\tau_1)(-1)^{(l+\sum\limits_{j=i-1 }^{k-1}m^{4'}_{\sigma(j)})m^{4'}_{\sigma(k)}}A_{m-i+1} B_{m-n+i+1}\\ &\xi^{4'}_{\sigma(k)}\big(f(\xi^{4'}_{\sigma(i-1)}\otimes\dots\otimes\xi^{4'}_{\sigma(n-1)}\otimes\Id^{\otimes l-n+i})\o \id^{\o m^{4'}_{\sigma(k)}}\big)\tau_{1}^{-1}\\
			&\text{ with $\sigma(k)= i-1$, $g$ and $f$ are directly composed}\qquad(\text{4.1.2.2})\\
			+& \text{others}\qquad(\text{4.1.2.3}).
		\end{aligned}
	\end{equation*}

	\begin{equation*}
		\begin{aligned}
			(4.2.2)=&\sum_{i=2}^{n}\sum_{\sigma\in \Sh(i-1,n-i)} \varepsilon(\sigma)(-1)^{l+1}\sum_{k=1}^{i-1}\sum_{k'=i-1}^{n-1} \sum\limits_{\tau\in\Sh(m_{\sigma(1)}+1,\dots,m_{\sigma(i-1)}+1,m+3-i,m_{\sigma(k)})}\\ &\sum\limits_{\tau_1\in\Sh(m^{4''}_{\sigma(i-1)}+1,\dots,m^{4''}_{\sigma(n-1)}+1,l-n-i+1,m^{4''}_{\sigma(k')})}sgn(\tau)sgn(\tau_1) (-1)^{(m+\sum\limits_{j=1 }^{k-1}m_{\sigma(j)})m_{\sigma(k)}+(l+\sum\limits_{j=i-1 }^{k'-1}m^{4''}_{\sigma(j)})m^{4''}_{\sigma(k')}}B_{m-i+3} B_{l-n+i+1}\\ &\xi^{4''}_{\sigma(k')}\big(f(\xi^{4''}_{\sigma(i-1)}\otimes\dots\otimes\xi^{4''}_{\sigma(n-1)}\otimes\Id^{\otimes l-n+i})\o \id^{\o m_{\sigma(k')}}\big)\tau_{1}^{-1}\\
			&\text{ with $\sigma(k')= i-1$, $g$ and $f$ are directly composed}\qquad(\text{4.2.2.1})\\
			+& \text{others}\qquad(\text{4.2.2.2}).
		\end{aligned}
	\end{equation*}
	
	With the same form of operation of iterative composition, using the Koszul sign and Lemma \ref{key lemma}, we have the following identities,
	\begin{align*}
		&(1.1.2.1)+(1.2.1)+(2.1.1)+(3.1.2.2)+(4.1.1)=0,\\
		&(1.2.2.2)+(2.1.2)+(3.2.2.1)+(4.1.2.1)=0,\\
		&(1.1.1)+(1.2.2.1)+(2.2.1)+(3.1.1)+(4.1.2.2)=0,\\
		&(1.1.2.2)+(2.2.2)+(3.1.2.1)+(4.2.2.1)=0,\\
		&(3.1.2.3)+(4.2.1)=0,\\
		&(3.2.1)+(4.1.2.3)=0,\\
		&(3.2.2.2)+(4.2.2.2)=0.
	\end{align*}

	For example $(1.1.2.1)+(1.2.1)+(2.1.1)+(3.1.2.2)+(4.1.1)=0$.
	
	\begin{align*}
		LHS=&(-1)^{l}\sum_{k=1}^{n-1}\sum\limits_{\tau\in\Sh(m^{1'}_{0}+1,\dots,m^{1'}_{n-1}+1,m+2-n,m^{1'}_{k})}sgn(\tau)(-1)^{ml+(l+\sum\limits_{j=1 }^{k-1}m^{1'}_{j})m^{1'}_{k}}  \\ 
		&A_{l}B_{m-n+2} \xi^{1'}_{k}\big(g(\xi^{1'}_{0}\otimes\dots\otimes\xi^{1'}_{n-1}\otimes\Id^{\otimes m+2-n})\o \Id^{\otimes m^{1'}_{k}}\big)\tau^{-1}\text{ with $k=0$}\\
		+&(-1)^{l}\sum_{i=2}^{n}\sum_{\sigma\in \Sh(i-1,n-i)} \varepsilon(\sigma) \sum\limits_{\tau\in\Sh(m_{\sigma(1)}+1,\dots,m_{\sigma(i-1)}+1,l+2-i)} \sum\limits_{\tau_1\in\Sh(m^{3'}_{\sigma(i-1)}+1,\dots,m^{3'}_{\sigma(n-1)}+1,m-n-i+1,m^{3'}_{\sigma(k)})}\\
		&\sum_{k=i-1}^{n-1}sgn(\tau)sgn(\tau_1) (-1)^{ml+(m+\sum\limits_{j=i-1 }^{k-1}m^{3'}_{\sigma(j)})m^{3'}_{\sigma(k)}}A_{l-i+1} B_{m-n+i+1}\\
		& \xi^{3'}_{\sigma(k)}\big(g(\xi^{3'}_{\sigma(i-1)}\otimes\dots\otimes\xi^{3'}_{\sigma(n-1)}\otimes\Id^{\otimes m-n+i+1})\o \id^{\o m^{3'}_{\sigma(k)}}\big)\tau_{1}^{-1} \\ 
		&\text{ with $\sigma(k)=i-1$ and  composite of $f$ and $g$ directly}\\
		+& (-1)^{l+1}
		\sum\limits_{\tau\in\Sh(m_{1}+1,\dots,m_{n-1}+1,l+m+2-n)}sgn(\tau)A_{l+m-n+1}\\ 
		&(f\bar{\circ} g)(\xi_{1}\otimes\dots\otimes\xi_{n-1}\otimes\Id^{\otimes m+l+2-n})\tau^{-1}\\ 
		&+(-1)^{l+1}\sum_{i=2}^{n}\sum_{\sigma\in \Sh(i-1,n-i)} \varepsilon(\sigma) \sum\limits_{\tau\in\Sh(m_{\sigma(1)}+1,\dots,m_{\sigma(i-1)}+1,m+2-i)}sgn(\tau) \sum\limits_{\tau_1\in\Sh(m^{4'}_{\sigma(i-1)}+1,\dots,m^{4'}_{\sigma(n-1)}+1,l-n-i)}sgn(\tau_1) \\ 
		&A_{m-i+1} A_{l-n+i-1} \big(f(\xi^{4'}_{\sigma(i-1)}\otimes\dots\otimes\xi^{4'}_{\sigma(n-1)}\otimes\Id^{\otimes m-n+i})\big)\tau_{1}^{-1} \\
		+&(-1)^{l+1} \sum\limits_{\tau\in\Sh(m^{1''}_{0}+1,\dots,m^{1''}_{n-1}+1,l+1-n)}sgn(\tau) \\ 
		&A_{m}A_{l-n} \big(f(\xi^{1''}_{0}\otimes\dots\otimes\xi^{1''}_{n-1}\otimes\Id^{\otimes l+1-n})\big)\tau^{-1}   \\
    	=&0 \text{ by Lemma \ref{key lemma}}.
	\end{align*}
	
	\bigskip

     \section{Appendix $2$: The controlling algebra of Rota-Baxter Lie algebras with weight} \label{Linfty for weight} 
     
     For Rota-Baxter Lie algebras with weight, we have a useful technique to get the controlling algebra. Hence we provide another method to obtain the $L_\infty [1]$-algebra, which coincides with the one in Corollary \ref{wRBLie}.
     
       \subsection{A generalised  version of derived bracket technique}\ \label{Subsect: derived bracket technique}

      The technique of derived brackets is invented by   Voronov   \cite{Vor05}. Now we introduce a generalised version of the derived bracket technique since we need a modified $L_\infty$-algebra related with weight $\mu\in \bfk$.

      \begin{defn}
      	A \textbf{generalised V-datum} is a data  $(\frakL, \frakm,\iota_\frakm, \fraka, \iota_\fraka, P,\Delta)$, where
      	\begin{itemize}
      		\item[$(1)$] $(\frakL,[-,-])$ is an ordinary graded Lie algebra,
      		\item[$(2)$] $(\frakm, [-,-]_\frakm)$ is an ordinary graded Lie  algebra together with an injective linear  map $\iota_\frakm: \frakm\to \frakL$ which is a    homomorphism of  ordinary graded Lie algebras,
      		\item[$(3)$] $\fraka$ is an abelian graded Lie  algebra equipped  with an injective linear  map $\iota_\fraka: \fraka\to \frakL$ which is a    homomorphism of ordinary graded Lie algebras,
      		\item[$(4)$] $P:\frakL\to \fraka$ is a linear map such that   $P\circ \iota_\fraka=\Id_\fraka$ and $\mathrm{Ker}(P)$ is an ordinary graded Lie subalgebra of $\frakL$,
      		\item[$(5)$] $\Delta\in \mathrm{Ker}(P)^1$ satisfying $[\Delta,\Delta]=0$ and  $[\Delta,\iota_\frakm(\frakm)]\subset \iota_\frakm(\frakm)$.
      	\end{itemize}
      \end{defn}

      \begin{thm}\label{theo:V-data}
      	Let $(\frakL, \frakm,\iota_\frakm, \fraka, \iota_\fraka, P,\Delta)$ be a generalised V-datum.
      	
      	Then  the graded vector space
      	$s \frakL\oplus \fraka$  has  an $L_\infty[1]$-algebra structure  which is defined  as follows$\colon$
      	$$\begin{array}{rcl}
      		l_1(sf)     &  =    & (-s [\Delta,f], P(f)),\\
      		l_1(\xi)     &  =    & P[\Delta,\iota_{\fraka}(\xi)],\\
      		l_2(sf,sg)   &  =    &(-1)^{|f|} s [f,g] , \\
      		l_i(sf,\xi_1,\cdots,\xi_{i-1})   &  =    & P[\cdots[f,\iota_{\fraka}(\xi_1)] ,\cdots,\iota_{\fraka}(\xi_{i-1})] ,  i\geq2,\\
      		l_i(\xi_1,\cdots,\xi_{i})   &  =    & P[\cdots[\Delta,\iota_{\fraka}(\xi_1)] ,\cdots,\iota_{\fraka}(\xi_{i})] ,  i\geq2,
      	\end{array}$$
      	for homogeneous elements $f,g\in\frakL$, $\xi,\xi_1,\cdots,\xi_i\in\fraka$ and all other components  of   $\{l_i\}_{i=1}^{+\infty}$ vanish.
      	
      	Similarly, the graded vector space
      	$s \frakm\oplus \fraka$  has  also an $L_\infty[1]$-algebra structure  which is defined  as follows$\colon$
      	$$\begin{array}{rcl}
      		l_1(sf)     &  =    & (-s \iota^{-1}_\frakm[\Delta,\iota_\frakm(f)], P\iota_\frakm(f)),\\
      		l_1(\xi)     &  =    & P[\Delta,\iota_{\fraka}(\xi)],\\
      		l_2(sf,sg)   &  =    &(-1)^{|f|} s [f,g]_\frakm , \\
      		l_i(sf,\xi_1,\cdots,\xi_{i-1})   &  =    & P[\cdots[\iota_\frakm(f),\iota_{\fraka}(\xi_1)] ,\cdots,\iota_{\fraka}(\xi_{i-1})] ,  i\geq2,\\
      		l_i(\xi_1,\cdots,\xi_{i})   &  =    & P[\cdots[\Delta,\iota_{\fraka}(\xi_1)] ,\cdots,\iota_{\fraka}(\xi_{i})] ,  i\geq2,
      	\end{array}$$
      	for homogeneous elements $f,g\in\frakm$, $\xi,\xi_1,\cdots,\xi_i\in\fraka$ and all other components of $\{l_i\}_{i=1}^{+\infty}$  vanish.

      	Moreover, there exists an injective  $L_\infty[1]$-algebra homomorphism $\iota:s \frak{m}\oplus \fraka\to s \frakL\oplus \fraka$ induced by $\iota_\frakm$.
      \end{thm}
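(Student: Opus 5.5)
The plan is to reduce the statement to Voronov's original derived bracket theorem for a differential graded Lie algebra whose differential preserves $\mathrm{Ker}(P)$ (\cite{Vor05}, together with its extension to $s\frakL\oplus\fraka$). The natural candidate differential is $d:=[\Delta,-]$.

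\textbf{Step 1: $(\frakL,d)$ is a dg Lie algebra.} Since $\Delta$ has degree $1$, the graded Jacobi identity gives $d^2=\tfrac12[[\Delta,\Delta],-]$, which vanishes because $[\Delta,\Delta]=0$. The map $d$ is a degree $1$ derivation of $[-,-]$, again by the graded Jacobi identity.

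\textbf{Step 2: $d$ preserves $\mathrm{Ker}(P)$.} Because $\Delta\in\mathrm{Ker}(P)$ and $\mathrm{Ker}(P)$ is a graded Lie subalgebra, we get $d(\mathrm{Ker}P)\subset\mathrm{Ker}P$.

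\textbf{Step 3: the remaining V-data hypotheses.} The map $\iota_\fraka$ identifies $\fraka$ with an abelian subalgebra, and $P\circ\iota_\fraka=\Id$, so $\frakL=\iota_\fraka(\fraka)\oplus\mathrm{Ker}(P)$. Thus $(\frakL,\fraka,P,d)$ is a V-datum in Voronov's sense with inner differential $d=[\Delta,-]$. Voronov's theorem then produces an $L_\infty[1]$-structure on $s\frakL\oplus\fraka$. Reading off its brackets gives exactly the displayed formulas:
\begin{itemize}
\item $l_1(sf)=(-s\,df,P f)$;
\item $l_1(\xi)=P\,d\xi$;
\item $l_2(sf,sg)=(-1)^{|f|}s[f,g]$;
\item $l_i(\xi_1,\dots,\xi_i)=P[\cdots[d\xi_1,\xi_2]\cdots,\xi_i]=P[\cdots[\Delta,\xi_1]\cdots,\xi_i]$;
\item $l_i(sf,\xi_1,\dots)=P[\cdots[f,\xi_1]\cdots]$.
\end{itemize}

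\textbf{Fallback for Step 3 if the cited version does not cover $s\frakL\oplus\fraka$.} In that case I would verify the generalised Jacobi identity directly. I would use the generating-series trick: for $\xi\in\fraka$ of degree $0$, tensored with a nilpotent parameter, the sum $\sum_k\frac{1}{k!}l_{k+j}(\dots,\xi^{\otimes k})$ is expressed through $P\,e^{-\mathrm{ad}_\xi}$. The identities then follow from three facts:
\begin{enumerate}
\item $e^{-\mathrm{ad}_\xi}$ is a Lie automorphism of $\frakL$;
\item $\fraka$ is abelian;
\item $P$ annihilates the subalgebra $\mathrm{Ker}P$, so that $P[x,y]=P[Px,y]+P[x-Px,y]$ splits the relevant sums.
\end{enumerate}
The main obstacle is the mixed identities involving both $sf$ and $\xi$'s. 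In these, the component $P(f)$ of $l_1(sf)$ feeds back into the $\xi$-brackets with $\Delta$, and the Koszul signs coming from the shift $s$ must be tracked carefully. I would first check these identities in arities $2$ and $3$ by hand, to fix the sign conventions, before running the general exponential argument.

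\textbf{Step 4: the subalgebra $\frakm$.} The hypothesis $[\Delta,\iota_\frakm(\frakm)]\subset\iota_\frakm(\frakm)$ and the fact that $\iota_\frakm$ is an injective Lie homomorphism show that $s\iota_\frakm(\frakm)\oplus\fraka$ is closed under all $l_i$ of the first structure. The only components landing in $s\frakL$ are $-s[\Delta,f]$ and $s[f,g]$, and both lie in $s\,\iota_\frakm(\frakm)$. Transporting the structure along $\iota_\frakm$ gives exactly the second set of formulas. The map $\iota=s\iota_\frakm\oplus\Id_\fraka$ is then a strict injective $L_\infty[1]$-homomorphism: it commutes with every $l_i$ by construction, since the transported brackets are defined as the restrictions.
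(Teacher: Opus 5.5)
Your proposal is correct and takes essentially the same route as the paper. The paper's justification is only a remark: the first structure is Voronov's theorem for the ordinary V-datum $(\frakL,\fraka,P,\Delta)$, and $s\frakm\oplus\fraka$ sits inside $s\frakL\oplus\fraka$ as a sub-$L_\infty[1]$-algebra via $\iota_\frakm$. Your Step 4 carries out the closure check the paper leaves implicit (only $-s[\Delta,f]$ and $s[f,g]$ land in $s\frakL$, and both remain in $s\,\iota_\frakm(\frakm)$ by hypothesis (5) and the homomorphism property), and your fallback is unnecessary because Voronov's result already covers $s\frakL\oplus\fraka$.
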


      \begin{defn} Let $(\frakL, \frakm,\iota_\frakm, \fraka, \iota_\fraka, P,\Delta)$ and  $(\mathfrak{L'}, \mathfrak{m'},\iota_{\frakm'}, \mathfrak{a'}, \iota_{\fraka'},P',\Delta')$ be two   generalised  V-data.
      	A \textbf{morphism} between them  is  a triple $f=(f_\frakL, f_\frakm, f_\fraka)$,  where $f_\frakL:\frakL\to\mathfrak{L'}$,
      	$f_\frakm: \frakm\to\mathfrak{m'}$ and $f_\fraka: \fraka\to\mathfrak{a'}$  are three  homomorphisms of graded Lie algebras such that
      	$f_\frakL\circ \iota_\frakm= \iota_{\frakm'}\circ f_\frakm$, $f_\frakL\circ \iota_\fraka= \iota_{\fraka'}\circ f_\fraka$,
      	$f_\fraka\circ P=P'\circ f_\frakL$ and $f_\frakL(\Delta)=\Delta'$.
      \end{defn}

      The following   result is obvious.
      \begin{prop}\label{prop:L_infty morphism}
      	Given a morphism of generalised V-data $$f: (\frakL, \frakm,\iota_\frakm, \fraka, \iota_\fraka, P,\Delta)\to (\mathfrak{L'}, \mathfrak{m'},\iota_{\frakm'}, \mathfrak{a'}, \iota_{\fraka'},P',\Delta'),$$  there exists an $L_\infty[1]$-algebra homomorphism $$\tilde{f}: s\frakm\oplus\fraka \to s\mathfrak{m'}\oplus\mathfrak{a'}$$ induced by $f$.
      \end{prop}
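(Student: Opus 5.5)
The statement to prove is Proposition~\ref{prop:L_infty morphism}: a morphism of generalised V-data induces an $L_\infty[1]$-algebra homomorphism. I will define $\tilde f:s\frakm\oplus\fraka\to s\frakm'\oplus\fraka'$ as a \emph{strict} morphism, with no higher components: $\tilde f(sx,\xi)=(sf_\frakm(x),f_\fraka(\xi))$. It then suffices to check that $\tilde f$ commutes with every structure map $l_i$ of Theorem~\ref{theo:V-data}, i.e.\ $\tilde f\circ l_i=l_i'\circ\tilde f^{\otimes i}$. Graded symmetry is not an issue, since $\tilde f$ has degree $0$.

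I will go through the list of structure maps one at a time. Throughout I use the compatibilities $f_\frakL\iota_\frakm=\iota_{\frakm'}f_\frakm$, $f_\frakL\iota_\fraka=\iota_{\fraka'}f_\fraka$, $f_\fraka P=P'f_\frakL$ and $f_\frakL(\Delta)=\Delta'$, together with the fact that $f_\frakL$ preserves brackets.

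\begin{itemize}
\item For $l_2(sx,sy)=(-1)^{|x|}s[x,y]_\frakm$: this follows because $f_\frakm$ is a Lie homomorphism of degree $0$.
\item For the first component of $l_1(sx)$, namely $-s\iota_\frakm^{-1}[\Delta,\iota_\frakm x]$: we have $f_\frakL[\Delta,\iota_\frakm x]=[\Delta',\iota_{\frakm'}f_\frakm x]$. Injectivity of $\iota_{\frakm'}$ together with $f_\frakL\iota_\frakm=\iota_{\frakm'}f_\frakm$ then gives $f_\frakm\iota_\frakm^{-1}[\Delta,\iota_\frakm x]=\iota_{\frakm'}^{-1}[\Delta',\iota_{\frakm'}f_\frakm x]$.
\item For the second component $P\iota_\frakm(x)$: $f_\fraka P\iota_\frakm x=P'f_\frakL\iota_\frakm x=P'\iota_{\frakm'}f_\frakm x$.
\item For $l_1(\xi)$ and all the higher maps $l_i$: each is of the form $P[\cdots[Z,\iota_\fraka\xi_1],\dots,\iota_\fraka\xi_k]$ with $Z=\Delta$ or $Z=\iota_\frakm(x)$. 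Applying $f_\fraka$ and using $f_\fraka P=P'f_\frakL$, I push $f_\frakL$ inside the iterated bracket. There it becomes $[\cdots[Z',\iota_{\fraka'}f_\fraka\xi_1],\dots]$ with $Z'=\Delta'$ or $Z'=\iota_{\frakm'}f_\frakm x$, which is exactly $l_i'(\tilde f(\cdot),\dots)$.
\end{itemize}

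The components that vanish on one side vanish on the other, because $\tilde f$ preserves the splitting into $s\frakm$ and $\fraka$. The same argument shows in passing that the analogous map $s\frakL\oplus\fraka\to s\frakL'\oplus\fraka'$ is a strict morphism.

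The only point needing care is the $\iota_\frakm^{-1}$ term in $l_1(sx)$. Its meaning relies on axiom~(5), $[\Delta,\iota_\frakm(\frakm)]\subset\iota_\frakm(\frakm)$, and checking naturality there uses injectivity of $\iota_{\frakm'}$ as shown above. Everything else is formal bookkeeping: no signs are introduced, because $\tilde f$ is a degree-zero strict map.
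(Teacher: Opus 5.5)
Your proposal is correct and matches what the paper intends. The paper gives no proof beyond calling the result obvious; its following remark suggests reducing to the classical V-data case and restricting along $\iota_\frakm$. Your strict map $sf_\frakm\oplus f_\fraka$, checked against each derived-bracket structure map using $f_\frakL\iota_\frakm=\iota_{\frakm'}f_\frakm$, $f_\frakL\iota_\fraka=\iota_{\fraka'}f_\fraka$, $f_\fraka P=P'f_\frakL$ and $f_\frakL(\Delta)=\Delta'$, is that argument written out, including the one implicit point: the $\iota_\frakm^{-1}$ term, which you handle correctly via injectivity of $\iota_{\frakm'}$.
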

  
      \begin{rmk}
      	The generalised V datum  is a refinement of the original V datum. The results above follow from the theory of original V data. Note that the original V datum $(\frakL,  \fraka,  P,\Delta)$ is a generalised V datum $(\frakL, \frakL,\iota_\frakL=\Id, \fraka, \iota_\fraka, P,\Delta) $ and $s \frakL\oplus \fraka$ is an  $L_\infty[1]$-algebra. It's obvious that $(\frakL, \frakm,\iota_\frakm, \fraka, \iota_\fraka, P,\Delta)$ is a sub-generalised V datum of $(\frakL, \frakL,\iota_\frakL, \fraka, \iota_\fraka, P,\Delta) $ and  $s \frak{m}\oplus \fraka$ is a sub-$L_\infty[1]$-algebra of $s \frakL\oplus \fraka$.
      \end{rmk}

      Obviously we have the modified version of Theorem~\ref{theo:V-data}.

      \begin{prop} \label{L infty}
      	Let $(\frakL, \frakm,\iota_\frakm, \fraka, \iota_\fraka, P,\Delta)$ be a generalised V-datum. If $P\circ \iota_\frakm=0$ and $\Delta =0$, then  there exists  an $L_\infty[1]$-algebra  structure on $s \frakm\oplus{\fraka}$, where the higher Lie bracket $\{l_i\}_{i=1}^{+\infty}$ are given by
      	$$\begin{array}{rcl}
      		l_2(sf,sg)   &  =    &(-1)^{|f|} s[f,g]_\frakm , \\
      		l_i(sf,\xi_1,\cdots,\xi_{i-1})   &  =    & P[\cdots[\iota_\frakm(f),\iota_{{\fraka}}(\xi_1)]_{\frakL},\cdots,\iota_{{\fraka}}(\xi_{i-1})]_{\frakL} ,  i\geq2,\\
      	\end{array}$$
      	for homogeneous elements $f,g\in\frakm$, $\xi_1,\cdots,\xi_i\in{\fraka}$ and all other components vanish.
      \end{prop}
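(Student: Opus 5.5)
The plan is to derive the statement directly from Theorem~\ref{theo:V-data} by specialising its second (the $s\frakm\oplus\fraka$) structure to $P\circ\iota_\frakm=0$ and $\Delta=0$.

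First, with $\Delta=0$ every bracket involving $\Delta$ vanishes. This gives $l_1(\xi)=P[\Delta,\iota_\fraka(\xi)]=0$ and $l_i(\xi_1,\dots,\xi_i)=P[\cdots[\Delta,\iota_\fraka(\xi_1)],\dots,\iota_\fraka(\xi_i)]=0$ for all $i\geq 2$. The first component of $l_1(sf)$ is $-s\iota_\frakm^{-1}[\Delta,\iota_\frakm(f)]$, which is also $0$. Condition (5) of a generalised V-datum, namely $[\Delta,\Delta]=0$ and $[\Delta,\iota_\frakm(\frakm)]\subset\iota_\frakm(\frakm)$, holds trivially for $\Delta=0$, so nothing extra needs checking there.

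Second, the hypothesis $P\circ\iota_\frakm=0$ kills the second component $P\iota_\frakm(f)$ of $l_1(sf)$. Hence $l_1$ vanishes identically on $s\frakm\oplus\fraka$.

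The remaining operations are $l_2(sf,sg)=(-1)^{|f|}s[f,g]_\frakm$ and $l_i(sf,\xi_1,\dots,\xi_{i-1})=P[\cdots[\iota_\frakm(f),\iota_\fraka(\xi_1)]_\frakL,\dots,\iota_\fraka(\xi_{i-1})]_\frakL$ for $i\geq 2$. These are exactly the formulas stated in the proposition, and all other components vanish as in Theorem~\ref{theo:V-data}. Since they are a specialisation of an $L_\infty[1]$-structure already established there, graded symmetry and the generalised Jacobi identities are inherited, and no further verification is needed.

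The only point to watch is that $l_2(sf,\xi)$ with $i=2$ is the single derived bracket $P[\iota_\frakm(f),\iota_\fraka(\xi)]$. It does not acquire any correction from $l_1$ terms, and that is consistent because $l_1\equiv 0$. There is no genuine obstacle: the statement is a formal corollary once the vanishing of the $\Delta$- and $P\iota_\frakm$-dependent terms has been noted.
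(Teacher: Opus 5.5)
Your proposal is correct and is the paper's own route: the paper gives no separate argument and introduces this proposition as the ``obvious'' specialisation of Theorem~\ref{theo:V-data}. You make that specialisation explicit. Setting $\Delta=0$ kills $l_1(\xi)$, the $[\Delta,-]$-component of $l_1(sf)$, and all brackets $l_i(\xi_1,\dots,\xi_i)$, while $P\circ\iota_\frakm=0$ kills the remaining component $P\iota_\frakm(f)$ of $l_1(sf)$, leaving exactly the stated $l_2(sf,sg)$ and $l_i(sf,\xi_1,\dots,\xi_{i-1})$.
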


      \begin{prop} \label{lambda L infty}
      	Use the notation given in the proposition above and  $\mu\in\bfk$, then  there exists  a modified $L_\infty[1]$-algebra  structure on $s \frakm\oplus{\fraka}$, where the higher Lie bracket $\{l_i\}_{i=1}^{+\infty}$ are given by
      	$$\begin{array}{rcl}
      		l'_2(sf,sg)   &  =    &(-1)^{|f|} s[f,g]_\frakm , \\
      		l'_i(sf,\xi_1,\cdots,\xi_{i-1})   &  =    &\mu^{|f|+2-i} P[\cdots[\iota_\frakm(f),\iota_{{\fraka}}(\xi_1)]_{\frakL},\cdots,\iota_{{\fraka}}(\xi_{i-1})]_{\frakL} ,  i\geq2,\\
      	\end{array}$$
      	for homogeneous elements $f,g\in\frakm$, $\xi_1,\cdots,\xi_i\in{\fraka}$ and all other components vanish.
      \end{prop}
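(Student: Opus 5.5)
The plan is to obtain the structure $\{l'_i\}$ from the one in Proposition~\ref{L infty} by a rescaling argument. This keeps us from re-verifying the generalised Jacobi identity from scratch. Write $l_i$ for the brackets of Proposition~\ref{L infty}; they satisfy the $L_\infty[1]$ relations because $P\circ\iota_\frakm=0$ and $\Delta=0$. The only nonvanishing components are then $l_2(sf,sg)$ and $l_i(sf,\xi_1,\dots,\xi_{i-1})$.

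First I will check that every nonzero $l_i$ has a well-defined "weight" with respect to the arity grading on $\frakm$ and $\fraka$ (here $\frakm,\fraka$ are graded by $|f|$, e.g.\ arity minus one in the application). Put
\[
w(sf)=|f|+1, \qquad w(\xi)=|\xi|+1\ \text{ for } \xi\in\fraka,
\]
and extend additively. Since $[-,-]_\frakL$ has degree $0$ and $P$ and $\iota$ preserve degree, an output $P[\cdots[f,\xi_1],\dots,\xi_{i-1}]$ lies in degree $|f|+\sum|\xi_j|$. The exponent $|f|+2-i$ is then exactly the defect
\[
\textstyle w(sf)+\sum_j w(\xi_j)-w(\text{output})-1=(|f|+1)+\sum_j(|\xi_j|+1)-(|f|+\sum_j|\xi_j|+1)-1=i-1,
\]
adjusted appropriately. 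Concretely, I will define a linear automorphism $\Phi_\mu$ of $s\frakm\oplus\fraka$ by $\Phi_\mu(sf)=\mu^{a(|f|)}sf$ and $\Phi_\mu(\xi)=\mu^{b(|\xi|)}\xi$, with affine exponents $a,b$ chosen so that $\Phi_\mu^{-1}\circ l_i\circ\Phi_\mu^{\otimes i}=l'_i$ for every $i$. The condition from $l_2(sf,sg)$ forces $a(p)+a(q)=a(p+q)$, so $a\equiv 0$ is natural. Then $l_i$ with $i\ge 2$ requires $\sum_j b(|\xi_j|)-b(|f|+\sum|\xi_j|)=|f|+2-i$. The choice $b(k)=-(k+1)$ gives $-\sum(|\xi_j|+1)+|f|+\sum|\xi_j|+1=|f|+2-i$, which works.

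When $\mu\neq0$, $\Phi_\mu$ is invertible and conjugating an $L_\infty[1]$-structure by a degree-preserving linear isomorphism yields an $L_\infty[1]$-structure. Graded symmetry and the generalised Jacobi identity are transported verbatim, since $\Phi_\mu$ is diagonal and introduces no signs. This gives the claim for $\mu\neq0$. For $\mu=0$ (or to avoid dividing by $\mu$), I will instead argue polynomially. Each Jacobi expression for $\{l'_i\}$ is, term by term, the corresponding expression for $\{l_i\}$ multiplied by a power of $\mu$. By the weight computation above, this power is the same for every term of a given relation, namely $\sum_{\text{inputs}}(\ldots)-b(\text{output})$ minus the number of brackets used, and that is constant. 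So each relation for $l'$ equals $\mu^{N}$ times the relation for $l$, which vanishes. This also handles $\mu=0$, with the convention $\mu^0=1$ and nonnegative exponents since only terms with $|f|+2-i\ge0$ are nonzero in the application.

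The main obstacle is confirming that the exponent is uniform across all terms of a single Jacobi relation. This matters especially for the terms $l_{n-i+1}(l_i(sf,\dots),\dots)$ where the inner output is an element of $\fraka$ fed into an outer $l_j(sg,\dots)$, compared with the terms $l_{n}(l_2(sf,sg),\dots)$. I would verify it by the additive weight bookkeeping above: a composite of brackets contributes the sum of individual exponents, which telescopes to depend only on the external inputs and the final output.
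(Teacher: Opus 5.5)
Your proposal is correct, and for $\mu\neq0$ it takes a genuinely different route from the paper.

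The paper's proof considers only the Jacobi relations with inputs $sf,sg,\xi_1,\dots,\xi_{n-2}$. It asserts that each such relation for $\{l'_i\}$ is $\mu^{|f|+|g|+3-n}$ times the corresponding relation for $\{l_i\}$.

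You instead realize $\{l'_i\}$ as the transport of $\{l_i\}$ along the diagonal degree-zero automorphism $\Phi_\mu(sf)=sf$, $\Phi_\mu(\xi)=\mu^{-(|\xi|+1)}\xi$. Your exponent check $\sum_j b(|\xi_j|)-b(|f|+\sum_j|\xi_j|)=|f|+2-i$ is right. This gives more than the statement: for $\mu\neq0$ the modified algebra is strictly isomorphic to the $\mu=1$ one. On Maurer--Cartan elements $\Phi_\mu$ acts by $(s\pi,T)\mapsto(s\pi,\mu^{-1}T)$, which in the application is the familiar rescaling between weight $\mu$ and weight $1$ Rota--Baxter operators.

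The rescaling cannot reach $\mu=0$, where you fall back on the paper's uniform-exponent argument. Your telescoping bookkeeping is exactly the justification the paper leaves out: a composite of brackets carries the exponent $\sum_{\text{inputs}}b-b(\text{output})$, with $b=0$ on $s\frakm$ and $b(k)=-(k+1)$ on $\fraka$. It yields the exponent $|f|+|g|+3-n$ without having to sort out which relations are nontrivial.

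Your caveat at $\mu=0$ is also needed, and the paper does not state it. For an abstract V-datum $|f|+2-i$ can be negative, so those brackets must vanish for the formula to make sense. This holds in the application, because $P[\cdots[\iota_\frakm(f),\iota_\fraka(\xi_1)],\dots,\iota_\fraka(\xi_{i-1})]=0$ for $i>|f|+2$. A term containing such a bracket is then zero on both sides.

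Two cosmetic fixes. Drop the $w$-``defect'' computation, since it yields $i-1$ rather than the exponent. Also drop the phrase ``minus the number of brackets used''; it is spurious, though harmless, since every Jacobi term contains exactly two brackets.
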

      \begin{proof}
      	
      	For general $n\geq 3$,  we just consider $x_1, x_2\in\frakm$ and others belong to $\fraka .$
      	We have $$\sum_{i=1}^n\sum_{\sigma\in \Sh(i,n-i)}\varepsilon(\sigma)\mu^{|x_1|+|x_2|+3-n} l_{n-i+1}(l_i(x_{\sigma(1)},\dots,x_{\sigma(i)}),x_{\sigma(i+1)},\dots,x_{\sigma(n)})=0.$$\\
      	Then generalised Jacobi identity hold for this modified $L_\infty[1]$-struceture.
      	
      \end{proof}

      \medskip

      \subsection{The $L_{\infty}$-structure for Rota-Baxter Lie algebras with weight }\

      Let $ \frak g$ be a vector space. Then we have an ordinary graded Lie algebra   $$\frakL:=\mathrm{Hom}(\wedge(\frakg\oplus\mathfrak{g}),\frakg\oplus\mathfrak{g})$$ equipped with the Nijenhuis-Richardson bracket $[-,-]_{\mathrm{NR}}$.  For convenience of presentation, we shall write $\frakg'$ for the second $\frakg$ in $\frakg\oplus \frakg$,
      denoted by $$\frakL:=\mathrm{Hom}(\wedge(\frakg\oplus\mathfrak{g}'),\frakg\oplus\mathfrak{g}').$$
      Let $$\frakm:=\mathrm{Hom}(\wedge\frakg,\frakg)\ \mathrm{and}\
      \fraka:=\Hom(\wedge\frakg,\frakg).$$
      
      Define $\iota_\frakm: \frakm\to \frakL$ by:
      	for given $f:\wedge^{n+1}\frakg\to\frakg \in \frakm$,
      $\iota_m(f)=f_0+\sum\limits_{i=1}^{n}f_i+f_{n+1}$
      where   $f_0:\wedge^{n+1}\frakg\to\frakg$, $f_i:\wedge^i\frakg\otimes\wedge^{n+1-i}\frakg'\to\frakg'$  and $f_{n+1}:\wedge^{n+1}\frakg'\to\frakg'$ is given by
      $$f_i(x_1,\dots , x_{n+1}):=f(x_1,\dots , x_{n+1}), \text{ for } 0\le i\le n+1.$$

      $\iota_\fraka$ identifies $\fraka =\Hom(\wedge\frakg,\frakg) $ with the subspace $ \Hom(\wedge \frakg',\frakg) $ of
      $\frakL$.
      
      Let $P:\frakL\to \fraka$ be the natural projection identifying the subspace $ \Hom(\wedge\frakg',\frakg) $ with  $\fraka =\Hom(\wedge\frakg,\frakg) $.

      \begin{thm}\label{thm: Linfinity for wight}
      	Keeping the above notation, then we have a generalised V-datum $(\frakL,\frakm,\iota_m,\fraka,\iota_a,P,0)$. Hence there exists an $L_\infty[1]$-algebra structure on  $s \frakm\oplus\fraka$, where $l_i$ are given by
      		$$
      	l_2(sf,sg)      =    (-1)^{|f|} s[f,g]_{\NR}, $$
      	and for $i\geq 2,n=i-2$,
      	\begin{equation*}
      		\begin{aligned}
      			l_i(sf,\xi_1,\cdots,\xi_{i-1})=&\sum\limits_{\tau\in\Sh(m_1+1,\dots,m_{i-1}+1)}\\ &
      		sgn(\tau)\big(f(\xi_1\otimes\dots\otimes\xi_{i-1})\big)\tau^{-1}\\
      		&-\sum_{k=1}^{i-1}\sum\limits_{\tau\in\Sh(m_1+1,\dots,m_{i-1}+1,1,m_k)}(-1)^{(n+\sum\limits_{j=1 }^{k-1}m_j)m_k} \\ 
      		&sgn(\tau)\xi_k  (f(\xi_1\otimes\dots\otimes\xi_{i-1}\otimes\Id)\otimes\Id^{m_k})\tau^{-1}.
      		\end{aligned}
      	\end{equation*}
      	for $i\geq 2,n>i-2$,
      	\begin{equation*}
      		\begin{aligned}
      			l_i(sf,\xi_1,\cdots,\xi_{i-1})=
      			&\sum_{k=1}^{i-1}\sum\limits_{\tau\in\Sh(m_1+1,\dots,m_{i-1}+1,n+3-i,m_k)}(-1)^{(n+\sum\limits_{j=1 }^{k-1}m_j)m_k} \\ 
      			&-sgn(\tau)\mu^{n-i+2} \xi_k  \big(f(\xi_1\otimes\dots\otimes\xi_{i-1}\otimes\Id^{\otimes n+3-i})\o \Id^{\otimes m_k}\big)\tau^{-1},
      		\end{aligned}
      	\end{equation*}
      	for homogeneous elements	$f\in\Hom(\wedge^{n+1}\frakg,\frakg)\subseteq \frakm$, $g\in\Hom(\wedge^{m+1}\frakg,\frakg)\subseteq \frakm$ and $\xi_j\in\Hom(\wedge^{m_j+1}\frakg,\frakg)\subseteq \fraka$, $1\leq j\leq i-1$, and all other components vanish.
      	
      \end{thm}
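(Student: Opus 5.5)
The first thing to do is check that $(\frakL,\frakm,\iota_\frakm,\fraka,\iota_\fraka,P,0)$ satisfies the axioms of a generalised V-datum, and in addition that $P\circ\iota_\frakm=0$. Then Proposition~\ref{L infty} gives an $L_\infty[1]$-structure. The weight is inserted afterwards through Proposition~\ref{lambda L infty}, and the last step is to compute the brackets explicitly.

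\textbf{Step 1: the axioms.}
\begin{itemize}
\item[(1)] $\frakL$ with $[-,-]_{\NR}$ is an ordinary graded Lie algebra by \cite{NR67}.
\item[(2)] $\iota_\frakm$ is a homomorphism of Lie algebras. I would identify $\iota_\frakm(f)$ with the pullback of $f$ along the map $\frakg\oplus\frakg'\to\frakg$, $(x,x')\mapsto x+x'$, where the output is placed in $\frakg'$ as soon as at least one input lies in $\frakg'$. With this description, $[\iota_\frakm f,\iota_\frakm g]_{\NR}=\iota_\frakm[f,g]_{\NR}$ can be checked component by component. The components whose inputs all lie in $\frakg$ reproduce $[f,g]_{\NR}$ exactly. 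For the mixed components, note that an inner output lying in $\frakg'$ keeps the outer output in $\frakg'$.
\item[(3)] $\iota_\fraka(\fraka)=\Hom(\wedge\frakg',\frakg)$ is abelian, since a composition would require an output in $\frakg$ to be fed into a $\frakg'$-slot, and this never happens.
\item[(4)] $P\circ\iota_\fraka=\Id$ holds by definition. $\mathrm{Ker}(P)$ consists of all components other than $\Hom(\wedge\frakg',\frakg)$. It is closed under the bracket: an element of $\Hom(\wedge\frakg',\frakg)$ can only be produced by composing a map $\wedge\frakg'\to\frakg'$ into a map $\wedge\frakg'\to\frakg$, and the second of these is not in the kernel.
\item[(5)] $\Delta=0$ is trivially allowed.
\end{itemize}
Finally, $P\iota_\frakm(f)=0$ because $\iota_\frakm(f)$ has no component of type $\wedge\frakg'\to\frakg$. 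Proposition~\ref{L infty} now yields the brackets $l_i(sf,\xi_1,\dots,\xi_{i-1})=P[\cdots[\iota_\frakm f,\xi_1],\dots,\xi_{i-1}]$ together with $l_2(sf,sg)=(-1)^{|f|}s[f,g]_{\NR}$.

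\textbf{Step 2: computing the iterated bracket.} Only two kinds of terms survive the projection $P$.
\begin{itemize}
\item Terms in which each $\xi_j$, with output in $\frakg$, is inserted into an argument slot of the component $f_0$ or $f_i$. For the result to have all inputs in $\frakg'$ and output in $\frakg$, the outer map must be $f_0$ with every slot filled by a $\xi$. This requires $n+1=i-1$, that is $n=i-2$, and produces the first sum $f(\xi_1\otimes\dots\otimes\xi_{i-1})$.
\item Terms in which some $\xi_k$ is applied on top of a component $f_j$ whose output lies in $\frakg'$, while the remaining $\xi$'s fill $\frakg$-slots and the leftover $\Id$-slots are inputs from $\frakg'$. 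These produce $\xi_k\big(f(\xi_1\otimes\dots\otimes\xi_{i-1}\otimes\Id^{\otimes n+3-i})\otimes\Id^{\otimes m_k}\big)$.
\end{itemize}
The shuffle signs and the Koszul signs $(-1)^{(n+\sum_{j<k}m_j)m_k}$ come from unwinding the definition of $\bar\circ$ and the grading conventions.

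\textbf{Step 3: the weight.} Proposition~\ref{lambda L infty} rescales $l_i$ by $\mu^{|f|+2-i}=\mu^{n+2-i}$. In the case $n=i-2$ this factor equals $1$, and for $n>i-2$ it gives $\mu^{n-i+2}$. This matches the stated formulas, which in turn match Corollary~\ref{wRBLie}. Consistency with Theorem~\ref{Thm: MC elements in ex Linifnity} at $\lambda=0$ serves as a check.

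The main obstacle is Step 2. The bookkeeping of which components $f_i$ survive and the Koszul and shuffle signs are both delicate. I would first verify the formula for $i=2,3$ against the Rota-Baxter identity of weight $\mu$, and then argue the general case by induction on $i$: each further bracket with $\xi_{i-1}$ either fills one more $\frakg$-slot of $f$ or composes $\xi_{i-1}$ on top.
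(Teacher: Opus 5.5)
Your proposal follows the paper's proof. The paper also reduces the theorem to checking that $\iota_{\frakm}$ is a homomorphism for $[-,-]_{\NR}$. It does this by expanding $[\sum_i f_i,\sum_j g_j]_{\NR}$ component by component; your description of $\iota_{\frakm}(f)$ as $f$ evaluated on $x+x'$, with output in $\frakg'$ as soon as some input lies in $\frakg'$, packages the same computation more cleanly. The paper takes the other V-datum axioms for granted. It then reads off the brackets from the derived-bracket theorem for generalised V-data, the $\mu$-rescaling proposition, and ``the expansion of iterated Nijenhuis--Richardson brackets''. Your Step~2 is exactly the bookkeeping behind that last phrase.

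There is a small slip in your check of axiom (4). An element of $\Hom(\wedge\frakg',\frakg)$ can also arise by inserting a map in $\Hom(\wedge\frakg',\frakg)$ into the unique $\frakg$-slot of a map in $\Hom(\frakg\otimes\wedge\frakg',\frakg)$. Your conclusion still holds, because in both cases one factor needs a nonzero $\Hom(\wedge\frakg',\frakg)$-component, which elements of $\mathrm{Ker}(P)$ lack.

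There is, however, one genuine error, and the paper's proof shares it. It concerns arity zero.
\begin{itemize}
\item The paper's $\frakm=\Hom(\wedge\frakg,\frakg)$ includes $\Hom(\wedge^0\frakg,\frakg)=\frakg$ (degree $n=-1$), and there $P\circ\iota_{\frakm}=0$ is false.
\item By your own output rule, $\iota_{\frakm}(a)=(a,0)$ for $a\in\frakg$. Since $\wedge^0\frakg'=\bfk$, this constant lies in $\Hom(\wedge^0\frakg',\frakg)\subseteq\iota_{\fraka}(\fraka)$, so $P\iota_{\frakm}(a)=a$.
\item Hence the proposition assuming $P\circ\iota_{\frakm}=0$ does not apply. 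The derived-bracket theorem with $\Delta=0$ instead produces an extra bracket $l_1(sa)=a$.
\item This term cannot be dropped. Take $g\in\Hom(\wedge^2\frakg,\frakg)$ and $\xi\in\Hom(\frakg,\frakg)$. The only nonzero terms of the generalised Jacobi identity on $(sa,sg,\xi)$ are $l_2(l_2(sa,sg),\xi)$ and $l_3(l_1(sa),sg,\xi)$. Each equals $\pm\big(g(a,\xi(x))-\xi(g(a,x))\big)$, which is nonzero in general, so the two must cancel.
\end{itemize}
So if ``all other components vanish'' is read literally, the stated brackets do not form an $L_\infty[1]$-algebra. You must either restrict $\frakm$ to arities $\geq 1$, or add $l_1(sf)=f$ for $f\in\Hom(\wedge^0\frakg,\frakg)$. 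The second option is exactly what the convention $A_{-1}=-1$ produces in the paper's main theorem at $\lambda=0$. The $\mu$-rescaling survives this correction, because the new $l_1$ carries the factor $\mu^{|f|+1}=1$ and every term of each Jacobi identity still has the same total power of $\mu$.
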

     
      \begin{proof}
      	It's sufficient to show $\iota_\frakm$ is an ordinary graded Lie homomorphism.

      		In fact, for arbitrary  $f:\wedge^{n+1}\frakg\to\frakg$ and $g:\wedge^{m+1}\frakg\to\frakg$ in $\frakm$, we have
      		\begin{equation*}
      			\begin{aligned}
      				&[\iota_\frakm(f),\iota_\frakm(g)]_{\NR}\\
      				=&	\bigg[\sum_{i=0}^{n+1}f_i,\sum_{j=0}^{m+1}g_j\bigg]_{\NR}\\
      				=&[f_0,g_0]_{\NR}+(-1)^{mn+1}\sum_{j=1}^{m}g_j\bar{\circ} f_0+\sum_{i=1}^{n}f_i\bar{\circ} g_0+\sum_{i=1}^{n}\sum_{j=1}^{m}[f_i,g_j]_{\NR}
      				+\sum_{i=1}^{n}[f_i,g_{m+1}]_{\NR}\\
      				&+\sum_{j=1}^{m}[f_{n+1},g_{j}]_{\NR}+[f_{n+1},g_{m+1}]_{\NR}\\
      				=& \left(f_0\bar{\circ} g_0+\sum_{i=1}^{n}f_i\bar{\circ} g_0+\sum_{i=1}^{n}\sum_{j=1}^{m}f_i\bar{\circ}  g_j
      				+\sum_{i=1}^{n} f_i\bar{\circ} g_{m+1}+\sum_{j=1}^{m}f_{n+1}\bar{\circ} g_{j}+f_{n+1}\bar{\circ} g_{m+1}\right)-\\
      				&(-1)^{mn}\left(g_0\bar{\circ} f_0+\sum_{j=1}^{m}g_j\bar{\circ} f_0+\sum_{i=1}^{n}\sum_{j=1}^{m}g_j\bar{\circ}  f_i
      				+\sum_{j=1}^{m}g_j \bar{\circ} f_{n+1}+\sum_{i=1}^{n} g_{m+1}\bar{\circ} f_{i}+g_{m+1}\bar{\circ} f_{n+1}\right)\\
      				=&\iota(f\bar{\circ} g)-(-1)^{mn} \iota(g\bar{\circ} f)\\
      				=&\iota_\frakm([f,g]_{\NR}).
      			\end{aligned}
      		\end{equation*}
      	
      	The explicit $L_\infty[1]$-algebra structure is given by Theorem \ref{theo:V-data}, Proposition \ref{lambda L infty} and the expansion of iterated Nijenhuis-Richardson brackets.
      \end{proof}

	\end{document}